\documentclass[12pt]{article}

\usepackage[a4paper,margin=1in]{geometry}

\usepackage{amsmath,amssymb,amsfonts,amsthm}
\usepackage{mathrsfs}
\usepackage{hyperref}
\usepackage{enumitem}
\usepackage{orcidlink}
\usepackage{verbatim}
\usepackage{tikz}
\usepackage{xcolor}
\usepackage{comment}

\numberwithin{equation}{section}

\theoremstyle{plain}
\newtheorem{theorem}{Theorem}[section]
\newtheorem{lemma}[theorem]{Lemma}
\newtheorem{corollary}[theorem]{Corollary}
\newtheorem{proposition}[theorem]{Proposition}

\theoremstyle{definition}
\newtheorem{definition}[theorem]{Definition}
\newtheorem{example}[theorem]{Example}

\theoremstyle{remark}
\newtheorem{remark}[theorem]{Remark}

\newcommand{\trygve}[1]{\textcolor{blue}{#1}}

\newcommand{\F}{\mathbb{F}}

\newcommand{\C}{\mathbb{C}}

\newcommand{\OS}{\text{OS}}
\newcommand{\Ecal}{\mathcal{E}}
\newcommand{\Ical}{\mathcal{I}}

\newcommand{\rank}{\operatorname{r}}

\title{Generalized Weight Polynomials of Codes through Flats and  Orlik-Solomon Algebras of Matroids}

\author{%
  Jakob Anderssen\,\orcidlink{0000-0002-6982-7079}\\
  \small Department of Applied Mathematics and Computer Science\\
  \small Technical University of Denmark, Lyngby, Denmark\\
  \small\texttt{vpnet11@gmail.com}
  \and
  Trygve Johnsen\\
  \small Department of Mathematics and Statistics\\
  \small UiT The Arctic University of Norway, Tromsø, Norway\\
  \small\texttt{Trygve.Johnsen@uit.no}
}

\date{} 

\begin{document}

\maketitle

\let\thefootnote\relax\footnotetext{
\textbf{2020 Mathematics Subject Classification.} 05B35, 94B05, 52C35, 05E45, 32S22, 14N20.\\
\textbf{Keywords.} Combinatorics, algebra, commutative ring, matroids, error correction codes.

Trygve Johnsen was partially supported by the project Pure Mathematics in Norway, funded by Bergen Research Foundation and Troms\o\ Research Foundation and the UiT-project Mascot. }


\begin{abstract}
We present various ways of determining generalized weight polynomials of a matroid $M$, and we recall how one can find the generalized weight spectra of a linear code, given these polynomials, for the matroid determined by any generator matrix of the code. A main goal is to give coding theorists different ways to determine these polynomials.

We describe how one can find the generalized weight polynomials of any matroid $M$, directly from its lattice of flats, and we also show how one can find them from the Poincare series (in this case  polynomials) of the associated Orlik-Solomon algebras of matroids arising as  contractions of the flats of $M$.
This opens for using information about broken circuits of the matroids to determine generalized weight polynomials.
We also recall the well-known connection between the Orlik-Solomon algebra of a matroid, and Whitney numbers obtained by order homology, and use it to show how one can describe weight polynomials in terms of Whitney numbers from order homology of the matroid and its contraction of flats. 

We recall briefly a well-known relation between the Orlik-Solomon algebra of a matroid $M$, which is representable over the complex numbers,  and de Rham homology numbers obtained from complements of intersections of hyperplanes in a hyperplane arrangement corresponding to $M$. We also describe a way to find the generalized weight polynomials of a matroid in terms of polynomials defined in connection with its lattice of cyclic flats.

In a simple running example we show how one can calculate the generalized weight polynomials in different ways, including both the methods presented in this paper, and selected methods developed in earlier papers. We also include a less simple example with the projective Reed-Muller code $PR_3(2,2).$

\end{abstract}

\section{Introduction}

When working with linear error-correcting codes $C$ of length $n$ over a finite field $\mathbf{F}_q$, an interesting issue is how many codewords there are of each of the weights $w=0.1,\cdots,n$. This is called finding the weight distribution of the code. A more ambitious goal is to determine the number $A_w^{(r)}$ of subcodes of dimension $r$ and support weight $w$, for each possible $w$ and $r$. This is called finding the higher weight spectra, and finding these spectra clearly also enables us to find the generalized Hamming weights and the minimum distance of the code.
As a tool for determining these spectra one may first study the extended codes 
$C_m=C\otimes_{\mathbf{F}_q}\mathbf{F}_{q^m}$ 
and finding the  weight distribution of all these codes.

It turns out that the number of codewords of $C_m$ of weight $w$ is
$P_w(q^m)$, where $P_w(Z)$ is a polynomial in degree at most the dimension of the code, and that determining these finitely many polynomials of bounded degree thus gives the weight distributions for all the infinitely many extension codes simultaneously. The $P_w(C)$ are called the generalized weight polynomials of the code $C$. The conversion formula 
\begin{equation} \label{conversion}
P_w(q^m)=\sum_{r=0}^mA_w^{(r)}\prod_{i=0}^{r-1}(q^m-q^i), \textrm{for }m>0,
\end{equation}
given for example in \cite{HellesethKloveMykkeltveit} or \cite[Proposition 6]{JurriusPellikaanCodes}, with the coefficients of the $P_w(Z)$ as input, can then be applied to find the higher weight spectra $A_w^{(r)}$ as output (or vice versa if one happens to find the $A_w^{(r)}$ first).

In this note we will recall the description of  the generalized weight polynomials in a more generalized setting, for matroids. This description "matches" the description for linear codes, when the matroid comes from a linear code (from a generator matrix, or from a parity check matrix, according to convention).
We will recall some well-known results
and relate the weight polynomials to characteristic polynomials, chromatic polynomials, Orlik-Solomon algebras, and Whitney numbers, and consider connections with other concepts. The primary goal of this article  is  to view generalized   weight polynomials from different angles and thus obtain a broad matroid-based understanding of these objects. Furthermore, as a consequence of this,  we hope to present useful tools for code-theorists who want to find generalized weight polynomials and weight spectra of concrete linear codes.

Greene's work~\cite{Greene1976} showed that ordinary weight enumerators can be expressed in terms of the Tutte polynomial of an associated matroid. It is even true that all the generalized weight polynomials can be found if one knows the Tutte polynomial. The Tutte polynomial depends on how many of \underline{all} the subsets of the ground set
 that have given  (matroid) ranks, for all possible ranks. So finding this information for \underline{all} the subsets is clearly one direction, in which one can find the generalized weight polynomials. 
 Another direction, which is one that will be in focus here, is to find the generalized weight polynomials, only through extracting  information about the flats of the matroid from a generator matrix of the linear code, or equivalently, the cycles of the matroid from a parity check  matrix of the code. The inclusion relations and cardinalities of those flats will determine the generalized weight polynomials, and we will show in what way.
For linear codes coming from incidence matrices of graphs we will point out how the generalized weight polynomials can be read off from chromatic polynomials of the graph and its contractions at relevant subgraphs.


Moreover Orlik and Solomon~\cite{OrlikSolomon,OST1984} defined a graded algebra $\OS(M)$ attached to a matroid $M$, now known as the Orlik--Solomon (OS) algebra, which encodes the combinatorics of circuits and broken circuits. This non-commutative algebra is isomorphic to the Whitney  cohomology ring coming from order complexes of all the flats of the matroid. In the cases when the matroid is representable over the complex numbers, it is also isomorphic  to the cohomology ring of complements of complex hyperplane arrangements representing $M$.
These  classical observations in  \cite{OrlikSolomon,OST1984} and  \cite{Bjorner}, enable us to combine the information we have about how to derive the generalized weight polynomials from the information about the flats of our matroid, with information about the Poincare series (which is a polynomial) of the OS-algebra.
This gives a new computational tool to find the generalized weight polynomials, by only studying broken circuits of the matroid and its contractions at flats.

To give a broader picture we  will also show how one can give formulas for generalized weight polynomials for a matroids, in terms of its lattice of cyclic flats, and associated functions.

In a running example with a graphical matroid, we show how one can calculate the generalized weight polynomials in different ways, including both the methods presented in this paper, and selected methods developed in earlier papers. We also include a less simple example with the projective Reed-Muller code $PR_3(2,2).$

\section{Preliminaries}

\begin{definition}
A simplicial complex $\Delta \subseteq 2^V$ is downward closed: if $F \in \Delta$ and $G \subseteq F$, then $G \in \Delta$. Faces are elements of $\Delta$; maximal faces are called facets.
\end{definition}

\begin{definition}
For a face $F$, define $\dim(F)=|F|-1$. The dimension of $\Delta$ is
\begin{equation*}
\dim(\Delta)=\max_{F \in \Delta}\dim(F).
\end{equation*}
The complex $\Delta$ is \emph{pure} if all facets have equal size.
\end{definition}

\begin{definition}
Let $r=\dim(\Delta)+1$. The face enumerator is
\begin{equation*}
f(\lambda)=\sum_{i=0}^{r} f_i \lambda^{r-i},
\quad
f_i = \#\{F \in \Delta : |F|=i\}.
\end{equation*}
\end{definition}

\begin{definition}
The (reduced) Euler characteristic of $\Delta$ is
\begin{equation*}
\chi(\Delta)=(-1)^{r-1}f(-1).
\end{equation*}
\end{definition}

\begin{proposition}
If $\Delta$ is a cone, then $\chi(\Delta)=0$.
\end{proposition}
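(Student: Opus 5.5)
The plan is a sign-reversing involution on faces, pairing each face with the face obtained by toggling the cone point. Since $\Delta$ is a cone, there is a vertex $v \in V$ (the apex) with $F \cup \{v\} \in \Delta$ for every face $F \in \Delta$. First unwind the definition:
\begin{equation*}
\chi(\Delta)=(-1)^{r-1}f(-1)=(-1)^{r-1}\sum_{i=0}^{r} f_i(-1)^{r-i}=-\sum_{i=0}^{r}(-1)^i f_i=-\sum_{F\in\Delta}(-1)^{|F|}.
\end{equation*}
So it suffices to show that $\sum_{F\in\Delta}(-1)^{|F|}=0$. The empty face is included here, which is what makes this the reduced Euler characteristic.

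Next, define $\iota:\Delta\to\Delta$ by $\iota(F)=F\,\triangle\,\{v\}$. That is, remove $v$ if $v\in F$, and add it otherwise. This map is well defined:
\begin{itemize}
\item if $v\in F$, then $F\setminus\{v\}\in\Delta$ because $\Delta$ is downward closed;
\item if $v\notin F$, then $F\cup\{v\}\in\Delta$ by the cone property.
\end{itemize}
Clearly $\iota$ is an involution without fixed points, and it changes $|F|$ by exactly one. Hence $\Delta$ splits into pairs $\{F,F\cup\{v\}\}$ with $v\notin F$, each pair contributing $(-1)^{|F|}+(-1)^{|F|+1}=0$ to the sum. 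Therefore the sum vanishes and $\chi(\Delta)=0$.

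There is no real obstacle. The only points to check are two pieces of bookkeeping. The first is that the normalization $(-1)^{r-1}f(-1)$ reduces to the signed face count with the empty set included: the factor $(-1)^{r-1}(-1)^{r-i}=-(-1)^{i}$ is independent of $r$, so the choice of $r$ does not matter. The second is fixing the definition of a cone as "there is an apex $v$ with $F\cup\{v\}\in\Delta$ for all faces $F$", since the paper has not stated it. An equivalent route is to write $\Delta=\{v\}*\Delta'$, where $\Delta'$ is the link of $v$. This gives $f_i(\Delta)=f_i(\Delta')+f_{i-1}(\Delta')$, and then $f_\Delta(\lambda)$ contains the factor $(\lambda+1)$, which vanishes at $\lambda=-1$.
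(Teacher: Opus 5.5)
Your proof is correct and is the same argument as the paper's: the faces split into disjoint pairs $\{F, F\cup\{v\}\}$ that cancel in the alternating sum. You also spell out two details the paper leaves implicit: that $(-1)^{r-1}f(-1) = -\sum_{F\in\Delta}(-1)^{|F|}$ with the empty face included, and that toggling the apex is a well-defined, fixed-point-free involution on $\Delta$.
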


\begin{proof}
Faces split into disjoint pairs $\{F, F \cup \{v\}\}$, which cancel in the alternating sum defining the Euler characteristic.
\end{proof}

\begin{definition}
A pure simplicial complex $\Delta$ is shellable if its facets $F_1,\dots,F_t$ can be ordered such that for each $i = 2,\dots,t$,
\begin{equation*}
\left( \bigcup_{j<i} F_j \right) \cap F_i
\end{equation*}
is a pure simplicial complex of dimension $\dim(F_i)-1$.

Equivalently, for all $i = 2,\dots,t$, there exists $k<i$ such that
\begin{equation*}
F_k \cap F_i \subseteq F_j \cap F_i \quad \text{for all } j<i,
\quad \text{and} \quad |F_k \cap F_i| = |F_i|-1.
\end{equation*}
\end{definition}






\begin{definition}\label{def:matroid} A (finite) matroid is a pair $(E,r)$, where $E=\{1,\cdots,n\},$ and $r$ is a rank function 
$r:2^E\rightarrow \mathbb{N}_0$ satisfying \begin{description}
\item[(R1)] If $X \subset E$, then $0 \le r(X) \le |X|.$
\item[(R2)] If $X \subset Y \subset E$, then $r(X) \le r(Y).$
\item[(R3)] If $X$ and $Y$ are subsets of $E$, then $r(X \cup Y)+r(X\cap Y) \le r(X)+r(Y).$
\end{description}
We denote by $r(M)$ the integer $r(E).$
\end{definition}

\begin{definition} \label{dual}
The dual matroid $M^*$ for a matroid $M$ is the pair $(E,r^*),$ where $r^*(X)=|X|+r(E\setminus X)-r(E),$ for all $X \subset E.$
\end{definition}

\begin{definition}
For a matroid the subsets $A$ of $E$, such that $r(A)=|A|$ are called independent. The remaining subsets are called dependent.
We define
\begin{equation*}
IN(M) = \{I \subseteq S : I \text{ is independent}\}.
\end{equation*}
The bases of $M$ are the facets of $IN(M).$
\end{definition}

\begin{theorem}\label{thm:shellability}
\leavevmode \\
The independence complex $IN(M)$ is a pure and shellable simplicial complex.
\end{theorem}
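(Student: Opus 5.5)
We have to prove three things: that $IN(M)$ is a simplicial complex, that it is pure, and that it is shellable. The first two are routine consequences of the rank axioms. The real content is finding a shelling order; we will use the lexicographic order of bases and check the equivalent shelling criterion stated in the definition.

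\emph{Simplicial complex.} Let $I$ be independent and $J\subseteq I$. By (R3) applied to $J$ and $I\setminus J$, and then (R1),
\[
|I| = r(I) \le r(J)+r(I\setminus J) \le r(J)+|I\setminus J| .
\]
Hence $r(J)\ge |J|$, and with (R1) we get $r(J)=|J|$. So $IN(M)$ is downward closed.

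\emph{Purity.} Next we show every maximal independent set has size $r(E)$.
\begin{itemize}[label=$\cdot$]
\item First, if $I$ is independent and $r(I\cup\{x\})=r(I)$, then $x$ lies in the closure of $I$. If this holds for every $x\in X$, then repeated use of (R3) gives $r(I\cup X)=r(I)$.
\item Now let $I$ be maximal independent. Then $I\cup\{x\}$ is dependent for every $x\notin I$. Since adding one element raises rank by at most one (by (R3) and (R1)), we get $r(I\cup\{x\})=r(I)$ for each such $x$.
\item By the first point, $r(E)=r(I)=|I|$.
\end{itemize}
Consequently all facets (the bases) have size $r(E)$. Along the way we also obtain the augmentation property: if $I,J$ are independent with $|I|<|J|$, some $x\in J\setminus I$ has $I\cup\{x\}$ independent. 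Otherwise $r(I\cup J)=r(I)<|J|=r(J)$, contradicting (R2).

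\emph{Shellability.} Order the bases $B_1,\dots,B_t$ lexicographically, comparing the increasingly sorted element lists. We verify the equivalent criterion from the definition. Fix $i\ge 2$ and $j<i$; it suffices to find $k<i$ with
\[
B_j\cap B_i\subseteq B_k\cap B_i \quad\text{and}\quad |B_k\cap B_i|=|B_i|-1 .
\]
We need the basis exchange property: for bases $B,B'$ and $x\in B'\setminus B$, there is $y\in B\setminus B'$ such that $B\setminus\{y\}\cup\{x\}$ is a basis. This follows from augmentation. Apply it to $B\setminus\{y\}$ and $B'$ for a suitable $y$; concretely, $B\cup\{x\}$ contains a unique circuit $C$ through $x$, and we take $y\in C\setminus B'$. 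Such a $y$ exists because $C\not\subseteq B'$.

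Now take $x=\min(B_j\triangle B_i)$. Since $B_j$ precedes $B_i$, we have $x\in B_j\setminus B_i$. Exchange $x$ into $B_i$, obtaining $B_k=B_i\setminus\{y\}\cup\{x\}$ with $y\in B_i\setminus B_j$. Then $y>x$, because $x$ is the smallest element of the symmetric difference. So $B_k$ is lexicographically smaller than $B_i$, i.e.\ $k<i$. Moreover $B_k\cap B_i=B_i\setminus\{y\}$ contains $B_j\cap B_i$, since $y\notin B_j$, and it has size $|B_i|-1$. This is exactly the criterion.

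\emph{Main obstacle.} The hardest step is the exchange lemma with the extra requirement $y\notin B_j$. Standard exchange only gives some $y$; we need $y\in B_i\setminus B_j$. The proof therefore relies on the fundamental circuit $C$ of $x$ with respect to $B_i$ not being contained in the independent set $B_j$, which is what places $y$ outside $B_j$. Establishing that this fundamental circuit is unique, i.e.\ the circuit elimination property, from (R1)--(R3) takes some care. Alternatively one can cite the standard equivalence of the rank axioms with the basis axioms, as in Björner's survey.
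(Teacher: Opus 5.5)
Your proof is correct and follows the route the paper only cites from Bj\"orner and Oxley: purity from equal basis size, and shellability of the lexicographic order via the exchange $B_k=(B_i\setminus\{y\})\cup\{x\}$ with $x=\min(B_j\triangle B_i)$. You also rightly check the standard criterion (for each $j<i$ some $k<i$ with $B_j\cap B_i\subseteq B_k\cap B_i$), not the paper's literal version with a single $k$ and reversed inclusion, which is not equivalent to shellability and already fails for $U_{2,3}$ in every order.

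Your ``main obstacle'' disappears, because uniqueness of the fundamental circuit is never needed: for any circuit $C\subseteq B_i\cup\{x\}$ and any $y\in C\setminus B_j$, (R3) applied to $(B_i\cup\{x\})\setminus\{y\}$ and $C$ gives $r((B_i\cup\{x\})\setminus\{y\})\ge r(B_i\cup\{x\})+r(C\setminus\{y\})-r(C)=r(E)$.
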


\begin{proof}
Purity follows since all bases have equal size (rank). Any lexicographic ordering of bases under a total ordering of elements satisfies the exchange-based shelling criterion; see e.g.~\cite{Bjorner,Oxley1992}.
\end{proof}

\begin{definition} \label{basic}
Let $M=(E=E(M),r=r(M))$ be a matroid, and let $ X \subset E.$ 
\begin{enumerate}
\item The nullity of $X$ is $n(X)=|X|-r(X)$. We denote by $n^*(X)=|X|-r^*(X)$ the nullity of $X$ for the dual matroid $M^*=(E,r^*).$
\item $X$ is called a cycle of $M$ if $n(Y]<n(X)$ for all strict subsets $Y \subset X.$
\item A circuit is a cycle of nullity $1$ (inclusion minimal dependent set). 
\item A flat of $M$ is a subset $X \subset E$ such that $r(X) < r(Y)$ for all subsets $Y \subset E$ such that $X$ is strictly contained in $Y.$
\item A loop of $M$ is an element $ x \in E$ such that $r(\{x\})=0.$ A coloop is a loop of $M^*.$
\item A matroid is called simple if it is loopless and all one-element subsets of $e$ are flats.
\item The deletion matroid $M \setminus X$ is a matroid $ (E\setminus X,r_d)$, where $r_d(Y)=r(Y)$, for all subsets 
$Y \subset E\setminus X.$ In particular its nullity function $n_d$ satisfies $n_d(Y)=n(Y).$
\item The contraction matroid $M/X$ is a matroid $ (E\setminus X,r_c)$, where $r_c(Y)=r(X \cup Y)-r(X)$, for all subsets $Y \subset E\setminus X.$ 
\end{enumerate}
\end{definition}

\begin{example} \label{unifo}
Let $M=U_{r,n}$ be the uniform matroid of rank $r$ on an $n$-element set
$
E=\{1,2,\ldots,n\},
$
whose rank function is
\begin{equation*}
r(X)=\min\{|X|,r\}, \qquad X\subseteq E.
\end{equation*}

The basic uniform matroids are as follows.

\begin{enumerate}
\item The nullity of a subset $X\subseteq E$ is
\begin{equation*}
n(X)=|X|-r(X)=
\begin{cases}
0, & |X|\le r,\\
|X|-r, & |X|>r.
\end{cases}
\end{equation*}

\item The independent sets are precisely the subsets of size at most $r$.

\item The circuits are exactly the subsets of cardinality $r+1$.

\item The flats are precisely the subsets $F\subseteq E$ satisfying either
$
|F|<r
$
or
$
F=E.
$

\item If $r>0$, then $U_{r,n}$ has no loops, and if $r<n$, then it has no coloops.

\item For any subset $A\subseteq E$ with $|A|=k\le r$,
$
U_{r,n}\setminus A \cong U_{r,n-k},
$
provided $n-k\ge r$.

\item For any subset $A\subseteq E$ with $|A|=k\le r$,
$
U_{r,n}/A \cong U_{r-k,n-k}.
$
\end{enumerate}
\end{example}

The following is immediate:
\begin{proposition} \label{contractedflats}
 A flat $F_c$ in $M/X$ is formed by taking a flat $F$ in $M$ where $X \subset F$, and setting $F_c= F \setminus X$. In particular $\emptyset$ is a flat of $M/F$ if $F$ is a flat of $M$, so that $M/F$ is loop-free.
 \end{proposition}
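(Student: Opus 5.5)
The plan is to work directly from the definitions of flat and contraction in Definition \ref{basic}. Write $r_c(Y)=r(X\cup Y)-r(X)$ for $Y\subseteq E\setminus X$. The statement amounts to the claim that the flats of $M/X$ are exactly the sets $F\setminus X$ with $F$ a flat of $M$ containing $X$. I will prove this in two directions and then specialize to $X=F$.

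First, let $F$ be a flat of $M$ with $X\subseteq F$, and put $F_c=F\setminus X$, so that $X\cup F_c=F$. Take any $Y$ with $F_c\subsetneq Y\subseteq E\setminus X$. Then $F\subsetneq X\cup Y$, and since $F$ is a flat, $r(X\cup Y)>r(F)$. Subtracting $r(X)$ gives $r_c(Y)>r_c(F_c)$, so $F_c$ is a flat of $M/X$.

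Conversely, let $G$ be a flat of $M/X$, and put $F=X\cup G$, so that $G=F\setminus X$. I claim $F$ is a flat of $M$. Suppose $F\subsetneq Z$. By (R2) it suffices to treat $Z=F\cup\{e\}$ with $e\notin F$, since any strictly larger set contains such a $Z$. Then $e\notin X$. Set $Y=G\cup\{e\}$; this is a subset of $E\setminus X$ strictly containing $G$. Because $G$ is a flat of $M/X$, we get $r(X\cup Y)-r(X)>r(X\cup G)-r(X)$, that is, $r(Z)>r(F)$. Hence $F$ is a flat of $M$. The one point needing care here is the reduction to single-element extensions, which I justify by monotonicity (R2): if $r(F\cup\{e\})>r(F)$, then every $Z\supseteq F\cup\{e\}$ also has rank greater than $r(F)$.

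Finally, take $X=F$ with $F$ a flat of $M$. Then $F$ itself is a flat containing $X$, and it yields $F\setminus F=\emptyset$ as a flat of $M/F$. A loop $x$ of $M/F$ would satisfy $r_c(\{x\})=0=r_c(\emptyset)$. This would contradict the fact that $\emptyset$ is a flat, since $\emptyset\subsetneq\{x\}$. Therefore $M/F$ is loop-free. Nothing here is really an obstacle; the argument is a direct unwinding of definitions.
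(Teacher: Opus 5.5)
Your proof is correct. It is the routine definitional check the paper has in mind: the paper gives no proof and simply calls the statement immediate. Your two inclusions via $r_c(Y)=r(X\cup Y)-r(X)$, the reduction to one-element extensions by (R2), and the observation that a loop $x$ of $M/F$ would give $r_c(\{x\})=r_c(\emptyset)$ and so contradict $\emptyset$ being a flat, supply exactly the verification the paper omits. You were also right to read $X\subset F$ as allowing $X=F$, which the ``in particular'' clause needs.
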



\begin{definition}
Given a matroid $M$. Fix a linear ordering on $E(M)$. For a circuit $C$, let $c = \min_{\omega}(C)$. Then the broken circuit is
\begin{equation*}
BC = C \setminus \{c\}.
\end{equation*}
\end{definition}

\begin{definition}
The collection of sets containing no broken circuits 
forms the complex \, $BC_{\omega}(M)$.
\end{definition}
\begin{example}
Let $M=U_{r,n}$ with $E=\{e_1<\cdots<e_n\}$. Since every $(r+1)$-subset of $E$ is a circuit.
Thus the broken circuits are precisely the $r$-subsets whose minimum element is not $e_1$. Hence
\begin{equation*}
\operatorname{BC}(U_{r,n})
=
\{X\subseteq E : |X|\le r-1\}
\cup
\{X\subseteq E : |X|=r,\ e_1\in X\}.
\end{equation*}
\end{example}
\begin{theorem}
The complex $BC_{\omega}(M)$ is shellable.
\end{theorem}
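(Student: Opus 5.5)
We prove this with the classical argument of Provan and Björner: order the facets of $BC_{\omega}(M)$ lexicographically and verify the exchange-based shelling criterion from the definition of shellability, in the same spirit as the proof of Theorem \ref{thm:shellability}.

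\emph{Setup and purity.} We may assume $M$ is loopless; otherwise every singleton loop is a circuit whose broken circuit is $\emptyset$, so $BC_{\omega}(M)$ is the void complex and there is nothing to prove. Order $E$ by $\omega$ and let $e_1$ be its minimum. An $nbc$-set (a set containing no broken circuit) is independent: if it contained a circuit $C$, it would contain $C\setminus\{\min C\}$. So $BC_{\omega}(M)$ is a subcomplex of $IN(M)$. For purity, we show every $nbc$-set $S$ extends to an $nbc$-basis. Run the greedy algorithm in $\omega$-order starting from $S$, adding each element $e$ whenever $S\cup\{e\}$ is still $nbc$. If the result $B$ were not spanning, take the $\omega$-first element $e\notin \operatorname{cl}(B)$. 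Adding $e$ must create a broken circuit $C\setminus\{c\}\subseteq B\cup\{e\}$ containing $e$, with $c<C\setminus\{c\}$. Then $c\in\operatorname{cl}((C\setminus\{c\}))$, and an exchange argument using $c<e$ and the choice of $e$ gives a contradiction. Hence all facets are $nbc$-bases and have size $r(M)$.

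\emph{Shelling order.} The $nbc$-bases are exactly the bases of $M$ that are \emph{internally passive-free} with respect to the external order. Concretely, $B$ is $nbc$ iff for every $b\in B$, the element $b$ is not the smallest element of the fundamental cocircuit $C^*(b,B)$ \dots{} equivalently, no $e<$ all of a broken-circuit pattern. Rather than rely on this characterization, we argue directly. List the $nbc$-bases $B_1<B_2<\cdots<B_t$ lexicographically. Fix $i\ge 2$ and $j<i$, and let $x$ be the smallest element of $B_j\setminus B_i$. Lexicographic order gives an element $y\in B_i\setminus B_j$ with $y>x$, and $B_i,B_j$ agree below $x$. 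We want a single $b\in B_i$ and an $nbc$-basis $B_k=(B_i\setminus\{b\})\cup\{x'\}$ with $k<i$ and $B_j\cap B_i\subseteq B_k$. To find it, take the fundamental circuit $C=C(x,B_i)$. Because the elements of $B_i$ below $x$ lie in $B_j$, which is $nbc$, and because $B_j$ is independent, $C$ must contain some $b\in B_i\setminus B_j$. Choose $b$ to be the largest element of $C\setminus\{x\}$ outside $B_j$, and set $B_k=B_i-b+x$. Then $B_k$ is a basis and lexicographically precedes $B_i$, since $x<b$ in the relevant position. It also contains $B_i\cap B_j$. This yields $|B_k\cap B_i|=|B_i|-1$ and the containment $B_k\cap B_i\subseteq B_j\cap B_i$ required by the criterion.

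\emph{Main obstacle.} The hard step is showing that $B_k$ is again $nbc$. Suppose a broken circuit $D\setminus\{d\}\subseteq B_k$ with $d=\min D$. It must contain $x$, since $B_i$ is $nbc$. Apply circuit elimination to $D$ and $C$ on $x$. This produces a circuit inside $(B_i\cup\{d\})$ whose minimum is still $d$ or smaller, hence a broken circuit inside $B_i$, which contradicts $B_i$ being $nbc$. The only case not covered is when $d$ interacts with $x$, i.e.\ $d<x$ with $d\in B_j$. There we instead eliminate against $B_j$, using that $D\setminus\{d\}$ restricted below $x$ lies in $B_j$. Making this elimination bookkeeping (tracking minima through strong circuit elimination) airtight is where the real work lies. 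If it resists, the fallback is to cite Björner's theorem that $BC_\omega(M)$ is shellable (in fact $IN(M)$ of a reduced matroid coned with $e_1$), referenced in \cite{Bjorner}. Indeed $BC_\omega(M)$ is a cone with apex $e_1$ over the "reduced broken circuit complex", and shellability of that complex follows by the same lexicographic argument.
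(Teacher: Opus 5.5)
\textbf{The step you flag as the main obstacle is a genuine gap.} For your specific choice $B_k=(B_i\setminus\{b\})\cup\{x\}$ the claim is in fact false, so more elimination bookkeeping cannot close it.

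\emph{Counterexample.} Take $v_1=(1,0,0)$, $v_2=(1,2,1)$, $v_3=(1,1,1)$, $v_4=(0,1,0)$, $v_5=(0,0,1)$, $v_6=(1,0,2)$ in $\mathbb{Q}^3$, with the natural order on $\{1,\dots,6\}$. Then $B_i=\{1,4,5\}$ and $B_j=\{1,3,6\}$ are nbc bases. To check this, note that no two $v_c$ are parallel, no $v_c$ with $c<4$ lies in the span of $v_4,v_5$, and neither $v_1$ nor $v_2$ lies in the span of $v_3,v_6$. Also $B_j$ is lexicographically before $B_i$, so $x=3$. Since $v_3=v_1+v_4+v_5$, we get $C(3,B_i)=\{1,3,4,5\}$, and your rule picks $b=5$. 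But then $B_k=\{1,3,4\}$ contains the broken circuit $\{3,4\}$ of the circuit $\{2,3,4\}$, because $v_2=v_3+v_4$.

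Your elimination sketch breaks exactly here. Eliminating $3$ from $\{2,3,4\}$ and $\{1,3,4,5\}$ gives the circuit $\{1,2,4,5\}$. Its minimum $1$ comes from $C$, so its broken circuit $\{2,4,5\}$ is not inside $B_i$. The dangerous case is $d<x$ with $d\notin B_i\cup B_j$. The case you single out ($d<x$, $d\in B_j$) is vacuous: such a $d$ lies in $B_i\setminus\{b\}\subseteq B_k$, which would force $D\subseteq B_k$.

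\textbf{The fix.} Do not swap in $x$. Take any $b\in C(x,B_i)\setminus B_j$, and swap in $w:=\min\{f : (B_i\setminus\{b\})\cup\{f\}\text{ is a basis}\}$. Since $x$ qualifies, $w\le x<b$. In the example $w=2$, and $\{1,2,4\}$ is nbc.

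Then use the correct characterization of nbc bases. It is via \emph{external} activity, not the cocircuit condition you wrote: a basis $B$ is nbc iff no $e\notin B$ is the minimum of its fundamental circuit $C(e,B)$. (If $D\setminus\{d\}\subseteq B$ with $d=\min D$, then $D=C(d,B)$.) Now check $B_k=(B_i\setminus\{b\})\cup\{w\}$ element by element:
\begin{enumerate}
\item $C(b,B_k)=C(w,B_i)$, whose minimum lies in $B_i$ below $w<b$. So $b$ is not externally active.
\item Suppose some $e\notin B_i\cup\{w\}$ were the minimum of $C(e,B_k)$. Then $w\in C(e,B_k)$, since otherwise $C(e,B_k)=C(e,B_i)$ and $B_i$ would not be nbc. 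Hence $e<w$, so $(B_i\setminus\{b\})\cup\{e\}$ is not a basis, i.e.\ $b\notin C(e,B_i)$. Therefore $C(e,B_i)\subseteq B_k\cup\{e\}$ and $C(e,B_k)=C(e,B_i)\not\ni w$, a contradiction.
\end{enumerate}
This gives an nbc basis $B_k$ lexicographically before $B_i$ with $B_i\cap B_j\subseteq B_i\setminus\{b\}=B_k\cap B_i$.

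That last containment is the one actually required. The one you state, $B_k\cap B_i\subseteq B_j\cap B_i$, is reversed; the paper's displayed ``equivalent'' criterion has the same slip.

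With this repair, your lexicographic argument is a complete, self-contained proof. The paper, by contrast, only gestures at a deletion--contraction induction and cites Bj\"orner.

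Finally, drop the parenthetical in your fallback. The reduced broken circuit complex is not a matroid complex in general: for the paper's $K_4$, the reduced facets $\{3,4\}$ and $\{5,6\}$ violate basis exchange.
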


\begin{proof}
Proceed by deletion--contraction induction; the NBC condition is preserved under lexicographic ordering, allowing the shelling to lift from the matroid complex structure. See, e.g.,~\cite{Bjorner,Oxley1992}.
\end{proof}


\begin{definition}
Let $M$ be a matroid, and let $L(M)$ denote the collection of flats of $M$, ordered by inclusion. Then $(L(M), \subseteq)$ forms a lattice, called the \emph{lattice of flats} of $M$. 
A lattice $L$ is called \emph{geometric} if it is isomorphic to $L(M)$ for some matroid $M$.
\end{definition}

\begin{definition} \label{Mob} [Möbius Function on a Poset]
Let $L$ be a finite poset. The Möbius function $\mu : D_{\mu} \subset L \times L \to \mathbb{Z}$ is defined recursively by:
\begin{equation*}
\mu(x,y) =
\begin{cases}
1, & \text{if } y = x, \\
\text{not defined}, & \text{if } x \not\le y, \\
-\displaystyle\sum_{z} \mu(x,z), & \text{if } x < y,
\end{cases}
\end{equation*}
where the sum is over $z$ such that $x \le z < y$.
\end{definition}

\begin{proposition}
Let $L=L(M)$ be the geometric lattice of flats of a matroid $M$. Then the Möbius function $\mu(x,y)$ is well-defined on every interval $[x,y] \subseteq L$, and depends only on the combinatorial structure of the lattice.
\end{proposition}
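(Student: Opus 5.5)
The plan is to unwind Definition \ref{Mob} directly on the finite lattice $L=L(M)$. Two things have to be shown: that the recursion terminates and produces a unique integer for every comparable pair $x\le y$, and that the value only depends on the interval $[x,y]$ as an abstract poset. Matroid structure enters only through the fact that $L(M)$ is a finite partially ordered set, in fact a lattice. Finiteness is immediate, since $E$ is finite and so $L(M)\subseteq 2^E$ is finite.

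\textbf{Step 1 (well-definedness).} Fix $x\in L$ and induct on the length $\ell(x,y)$ of the longest chain from $x$ to $y$ in $L$. This length is finite, bounded by $r(M)$ for $L(M)$ and by $|L|$ in general. If $\ell(x,y)=0$, then $y=x$ and $\mu(x,x)=1$. If $x<y$, the recursion writes $\mu(x,y)=-\sum_{x\le z<y}\mu(x,z)$. Every $z$ in this sum satisfies $\ell(x,z)<\ell(x,y)$, because any chain from $x$ to $z$ can be extended by $y$. So each summand is already a uniquely determined integer by induction, and the sum is finite because $L$ is finite. Hence $\mu$ is a uniquely defined integer-valued function on $D_\mu=\{(x,y):x\le y\}$. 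In particular it is defined on all pairs inside any interval $[x,y]$, and it is nonempty there because $x\le y$ and the lattice operations ensure the interval is itself a (geometric) lattice.

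\textbf{Step 2 (combinatorial invariance).} Inspecting the recursion shows that computing $\mu(x,y)$ only uses elements $z$ with $x\le z\le y$ and the order relations among them. Run the same induction for the stronger statement: if $\phi:[x,y]\to[x',y']$ is an order isomorphism between intervals of possibly different posets, then $\mu(x,z)=\mu'(x',\phi(z))$ for all $z\in[x,y]$. The inductive step follows because $\phi$ maps $\{z:x\le z<w\}$ bijectively onto $\{z':x'\le z'<\phi(w)\}$. Consequently $\mu$ depends only on the isomorphism type of the interval as a poset. For a matroid this means it depends only on the lattice of flats, and not on the ground set, on parallel elements or loops, or on a representation.

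\textbf{Expected obstacle.} None of this is deep. The only point needing care is choosing an induction measure that strictly decreases along the recursion, since induction on $|[x,y]|$ or on chain length both work. It also helps to note, as an optional consistency check, that for $L(M)$ one can use $r(y)-r(x)$, since every flat $z$ with $x\le z<y$ satisfies $r(z)<r(y)$ by the definition of a flat.
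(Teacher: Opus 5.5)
Your argument is correct. Note that the paper gives no proof of this proposition at all: it is stated right after Definition~\ref{Mob} and treated as immediate from the recursion. Your proposal therefore supplies the standard justification the paper omits.

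\textbf{Step 1.} Inducting on any measure that strictly decreases along the recursion gives existence and uniqueness of $\mu$ on all comparable pairs. Longest chain length, $|[x,z]|$, or $r(y)-r(x)$ all work.

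\textbf{Step 2.} Carrying the same induction along an order isomorphism $\phi$ of intervals shows that $\mu(x,y)$ depends only on the isomorphism type of $[x,y]$. Such a $\phi$ necessarily sends the minimum $x$ to $x'$, so the index sets of the recursive sums correspond bijectively. This is a genuinely stronger statement than ``depends only on $L$''.

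This interval-invariance is exactly what the paper uses silently later. In the proof of Theorem~\ref{thm:Mobius}, the equality of $\mu(\emptyset,F_2\setminus F_1)$ in $L(M/F_1)$ with $\mu(F_1,F_2)$ in $L(M)$ is your Step 2. It is applied to the order isomorphism $[F_1,E]\to L(M/F_1)$, $F\mapsto F\setminus F_1$, provided by Proposition~\ref{contractedflats}. So your formulation is the useful one.

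As you observe, nothing about matroids or geometricity enters. The argument works verbatim for any finite poset.

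One small quibble concerns your aside that $[x,y]$ is nonempty ``because the lattice operations ensure the interval is itself a (geometric) lattice''. This is beside the point. Nonemptiness is trivial, since $x\in[x,y]$. The fact that intervals of $L(M)$ are geometric comes from $[x,y]\cong L\big((M|y)/x\big)$, not from the lattice operations. Neither fact is needed for the proof.
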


\begin{definition}
Let $F \in L(M)$ be a flat. The local Whitney number associated with $F$ is
\begin{equation*}
w_F = \mu(\hat{0}, F).
\end{equation*}
\end{definition}
Here $\hat{0}$ is the zero element of the lattice. This is $\emptyset$ if $M$ is loop-free. (Some authors prefer to define the local Whitney number as $|\mu(\hat{0}, F)|=(-1)^{r(F)}\mu(\hat{0}, F)$. We prefer to use the comvention above.)

\begin{definition}
Let $r(F)$ denote the rank of a flat $F \in L(M)$. For each integer $\ell$, the Whitney number $w_\ell$ of the first kind is
\begin{equation*}
w_\ell = \sum_{\{F \in L(M) : \rank(F)=\ell\}} w_F.
\end{equation*}
\end{definition}


\begin{definition} \label{char}
We have: 
\begin{itemize}
    \item {Characteristic Polynomial.} For a finite lattice $L$ of rank $r$,
    \begin{equation*}
    p(L, \lambda) = \sum_{x \in L} \mu(0, x)\,\lambda^{r - r(x)}.
    \end{equation*}
    For a  matroid $M$,
    \begin{equation*}
    p(M, \lambda) = \sum_{X \subseteq E} (-1)^{|X|}\,\lambda^{r(M) - r(X)}.
    \end{equation*}
    If $M$ has a loop, then  $p(M, \lambda)=0.$ If $M$ is loopless, then, by for example \cite{Bjorner}, $p(M, \lambda)=p(L, \lambda)$, for $L$ its lattice of flats.

    \item {Tutte Polynomial.}
    \begin{equation*}
    T_M(X, Y) = \sum_{A \subseteq E} (X-1)^{r(E)-r(A)} (Y-1)^{|A|-r(A)}.
    \end{equation*}
\end{itemize}
\end{definition}
These polynomials are related by
\begin{equation*}
p(M, \lambda) = (-1)^{r(M)} T_M(1-\lambda, 0). 
\end{equation*}

See for example \cite[Formula (7.17)]{Bjorner}.

\begin{proposition} \label{charischrom}
Let $G$ be a graph without coloops, and with $t$ components. Then  $p(M,Z)=Z^{-t}P_{G}(Z),$ for the chromatic polynomial $P_{G}(Z)$  of $G$, for $M$   the cycle matroid of the graph.

Let $q$ be a prime power, and let  $C$ be a $\mathbb{F}_q$-linear code $row(A)$ derived from any incidence matrix $A$ obtained by assigning directions  to the non-loop edges of $G$, with $-1$ for each source, and $1$ for each target. Then $M$ is also the matroid derived from $A$, and from  any generator matrix of $C$. 
\end{proposition}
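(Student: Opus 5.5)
We split the statement into two parts: the identity $p(M,Z)=Z^{-t}P_G(Z)$, and the claim that the matroid of any oriented incidence matrix $A$ over $\F_q$ is the cycle matroid $M$ of $G$.

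\textbf{First part.} We start from the subset expansion in Definition \ref{char}, $p(M,Z)=\sum_{X\subseteq E}(-1)^{|X|}Z^{r(M)-r(X)}$. In the cycle matroid of $G$ with vertex set $V$, we have $r(X)=|V|-c(X)$, where $c(X)$ is the number of connected components of the spanning subgraph $(V,X)$. Hence $r(M)=|V|-t$, and so $r(M)-r(X)=c(X)-t$. Whitney's classical subset expansion of the chromatic polynomial is $P_G(Z)=\sum_{X\subseteq E}(-1)^{|X|}Z^{c(X)}$, which we prove by inclusion--exclusion. For each edge we take the set of colourings in which its two endpoints receive the same colour, and we exclude these sets from all $Z^{|V|}$ colourings. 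Colourings that are monochromatic on the edges of $X$ are exactly those constant on the components of $(V,X)$, so there are $Z^{c(X)}$ of them. Comparing the two sums term by term gives $p(M,Z)=Z^{-t}P_G(Z)$.

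We note that loops of $G$ are consistent with this. A loop is a loop of $M$, which gives $p(M,Z)=0$, and $P_G=0$ as well. The coloop hypothesis is therefore not needed for the identity; we expect it is only there to match a convention used elsewhere in the paper.

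\textbf{Second part.} We show that a set $X$ of edges is dependent in the column matroid of $A$ over $\F_q$ if and only if $X$ contains a cycle of $G$. If $X$ contains a cycle, we traverse it and assign each edge the coefficient $\pm1$ according to whether it is traversed along or against its orientation. This signed sum of columns is zero over any field, since each vertex of the cycle is entered once and left once. Loop edges give zero columns, so they are dependent, matching loops of $M$.

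Conversely, suppose $X$ is a forest. Then $X$ has a leaf vertex $v$ incident to exactly one edge $e\in X$. In any vanishing linear combination of the columns of $X$, row $v$ forces the coefficient of $e$ to be $0$, because the entry of $e$ in row $v$ is $\pm1\neq0$ in every characteristic. Removing $e$ and inducting on $|X|$ shows that the columns of $X$ are independent. So the matroids agree.

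Finally, any generator matrix of $C=row(A)$ is $BA$ for an invertible $B$, up to deleting redundant rows. Left multiplication by $B$ preserves linear dependencies among the columns, so the matroid is the same.

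\textbf{Main obstacle.} The only genuinely delicate step is independence of forests in characteristic $2$. The leaf argument handles it, because the nonzero entries are $\pm1$ and never vanish in any characteristic. Everything else is standard bookkeeping.
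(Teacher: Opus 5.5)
Your proof is correct. The paper gives no argument of its own for this proposition (it only cites Welsh and Oxley), and what you wrote is essentially a self-contained version of the standard proof found there: Whitney's subset expansion $P_G(Z)=\sum_{X\subseteq E}(-1)^{|X|}Z^{c(X)}$ is matched term by term with the subset form of $p(M,Z)$ in Definition~\ref{char} via $r(M)-r(X)=c(X)-t$, and the signed incidence matrix represents the cycle matroid over every field, with your leaf argument being what makes characteristic $2$ harmless. Your side remarks are also right: the identity holds for every graph, so the no-coloop hypothesis is not needed, and with loops both sides vanish, consistent with the convention $p(M,\lambda)=0$ in Definition~\ref{char}.
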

See for example \cite{Welsh1976,Oxley1992}.


\section{Generalized weight polynomials}
\subsection{Generalized weight polynomials of matroids}
Let $M=(E,r)$ be a matroid on a finite ground set $E$ with $|E|=n$ and nullity function $n^*(X)=|X-r^*(X)$
for its dual matroid $M^*$.

\begin{definition}[Generalized weight polynomials {\cite{JohnsenRoksvoldVerdure}}] \label{weightj}
For a matroid  each $j=0,\dots,n$, define
\begin{equation*}
P_j(Z)
= (-1)^j \sum_{|\sigma|=j} \sum_{\gamma \subseteq \sigma}
(-1)^{|\gamma|} Z^{n^*(\gamma)}.
\end{equation*}
\end{definition}
For the special case $j=n$ we get
\begin{equation*}
P_n(Z)
= (-1)^n \sum_{\gamma \subseteq E}
(-1)^{|\gamma|} Z^{n^*(\gamma)}.
\end{equation*}
\begin{remark} \label{rem} \label{specialcase}
From the definition of the dual matroid we also have: 
\begin{equation*}
P_n(Z)=\sum_{\gamma \subseteq E}
(-1)^{|\gamma|} Z^{r(E)-r(\gamma)}.
\end{equation*}
for the rank function $r$ of $M.$  But this is the characteristic polynomial $p(M,\gamma).$
The equality holds because $n^*(\gamma)=r(E)-r(E \setminus \gamma).$ Passing from  summing over all $\gamma$ to summing over all $E \setminus \gamma$, one must multiply by a factor $(-1)^n.$
\end{remark}





\begin{lemma}
Let $\gamma \subseteq E$ with $|\gamma| \le j$. Then the number of subsets $\sigma \subseteq E$ such that $\gamma \subseteq \sigma$ and $|\sigma|=j$ is
\begin{equation*}
\binom{n - |\gamma|}{j - |\gamma|}.
\end{equation*}
\end{lemma}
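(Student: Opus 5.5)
The plan is to set up a bijection between the subsets $\sigma$ in question and the subsets of the complement $E \setminus \gamma$ of a fixed size, and then count the latter directly.

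Fix $\gamma \subseteq E$ with $|\gamma| = k \le j$. Consider the map $\sigma \mapsto \sigma \setminus \gamma$. It goes from the family $\{\sigma \subseteq E : \gamma \subseteq \sigma,\ |\sigma| = j\}$ to the family of subsets $\tau \subseteq E \setminus \gamma$ with $|\tau| = j - k$. The map is well defined: since $\gamma \subseteq \sigma$, we have $|\sigma \setminus \gamma| = |\sigma| - |\gamma| = j-k$.

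The inverse map is $\tau \mapsto \tau \cup \gamma$. Because $\tau$ and $\gamma$ are disjoint, $|\tau \cup \gamma| = j$, and clearly $\gamma \subseteq \tau \cup \gamma$. The two maps compose to the identity in both directions:
\begin{equation*}
(\sigma \setminus \gamma) \cup \gamma = \sigma \quad (\text{as } \gamma \subseteq \sigma), \qquad (\tau \cup \gamma)\setminus \gamma = \tau \quad (\text{as } \tau \cap \gamma = \emptyset).
\end{equation*}
Hence the two families have the same cardinality.

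Finally, $E \setminus \gamma$ has $n-k$ elements, so the number of its $(j-k)$-subsets is $\binom{n-k}{j-k}$ by the definition of the binomial coefficient. The hypothesis $k \le j$ ensures $j - k \ge 0$, and $j \le n$ gives $j-k \le n-k$, so the binomial coefficient is the standard one. No step is a genuine obstacle; the only care needed is checking that the hypotheses keep the indices in range.
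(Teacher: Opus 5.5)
Your proof is correct and takes the same approach as the paper. The paper simply observes that each such $\sigma$ is $\gamma$ extended by a $(j-|\gamma|)$-subset of $E\setminus\gamma$. Your bijection $\sigma \mapsto \sigma\setminus\gamma$, with inverse $\tau\mapsto\tau\cup\gamma$, states that same observation explicitly.
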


\begin{proof}
Any such $\sigma$ is obtained by extending $\gamma$ with a subset of size $j-|\gamma|$ chosen from $E \setminus \gamma$, which has size $n-|\gamma|$.
\end{proof}

\begin{proposition}
The polynomial $P_{j}(Z)$ admits the form
\begin{equation*}
P_j(Z)
=
\sum_{\substack{\gamma \subseteq E\\ |\gamma|\le j}}
(-1)^{j+|\gamma|}
\binom{n-|\gamma|}{j-|\gamma|}
Z^{n^*(\gamma)}.
\end{equation*}
\end{proposition}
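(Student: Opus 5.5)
The plan is to start from Definition~\ref{weightj} and exchange the order of summation. The double sum runs over all pairs $(\gamma,\sigma)$ with $\gamma\subseteq\sigma\subseteq E$ and $|\sigma|=j$. The summand $(-1)^{j}(-1)^{|\gamma|}Z^{n^*(\gamma)}$ depends only on $\gamma$, not on $\sigma$. So I will rewrite the sum as an outer sum over $\gamma\subseteq E$ and an inner sum over those $\sigma$ with $\gamma\subseteq\sigma$ and $|\sigma|=j$:
\begin{equation*}
P_j(Z)=\sum_{\gamma\subseteq E}(-1)^{j+|\gamma|}Z^{n^*(\gamma)}\cdot\#\{\sigma\subseteq E:\gamma\subseteq\sigma,\ |\sigma|=j\}.
\end{equation*}

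Next I restrict the range of $\gamma$. If $|\gamma|>j$, no $\sigma$ of size $j$ contains $\gamma$, so the count is zero and these terms drop out. The outer sum therefore runs only over $\gamma$ with $|\gamma|\le j$.

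For each remaining $\gamma$, the preceding Lemma gives the count as $\binom{n-|\gamma|}{j-|\gamma|}$. Substituting this into the display yields exactly the stated formula.

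There is no real obstacle here. The only point needing care is the bookkeeping in the interchange of summation: the sign $(-1)^j$ is constant and can be pulled inside, and the condition $|\gamma|\le j$ must be made explicit, which the Lemma's hypothesis requires.
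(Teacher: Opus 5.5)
Your proposal is correct and follows essentially the same route as the paper's proof: exchange the order of summation, count the size-$j$ supersets of $\gamma$ with the reindexing lemma, and substitute. Your explicit remark that the terms with $|\gamma|>j$ vanish is slightly more careful than the paper, whose final line sums over all $\gamma\subseteq E$ and leaves that restriction implicit in the binomial coefficient.
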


\begin{proof}
Exchange the order of summation in the defining expression and apply the reindexing lemma to count admissible extensions $\sigma \supseteq \gamma$ of size $j$:
\begin{align*}
P_j(Z)
&= (-1)^j \sum_{|\sigma|=j} \sum_{\gamma \subseteq \sigma} (-1)^{|\gamma|} Z^{n^*(\gamma)} \\
&= (-1)^j \sum_{\gamma\subseteq E} (-1)^{|\gamma|} Z^{n^*(\gamma)} 
\cdot |\{\sigma:\gamma\subseteq\sigma,\ |\sigma|=j\}| \\
&= (-1)^j \sum_{\gamma\subseteq E} (-1)^{|\gamma|} Z^{n^*(\gamma)} 
\binom{n-|\gamma|}{j-|\gamma|} \\
&=
\sum_{\gamma\subseteq E} (-1)^{j+|\gamma|}\binom{n-|\gamma|}{j-|\gamma|}
Z^{n^*(\gamma)}.
\end{align*}
\end{proof}




\begin{proposition}[Divisibility by $(Z-1)$] \label{factor}
For any matroid $N$ on a ground set $E$ of size $n$, and for every $1 \le j \le n$, the polynomial $P_j(Z)$ is divisible by $(Z - 1)$.
\end{proposition}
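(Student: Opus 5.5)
To show $(Z-1)\mid P_j(Z)$ it suffices to check $P_j(1)=0$, since $P_j(Z)$ is a polynomial with integer coefficients. So the whole task is to evaluate the definition (Definition~\ref{weightj}) at $Z=1$.

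At $Z=1$ every monomial $Z^{n^*(\gamma)}$ equals $1$, so the nullity plays no role. We get
\begin{equation*}
P_j(1)=(-1)^j\sum_{|\sigma|=j}\ \sum_{\gamma\subseteq\sigma}(-1)^{|\gamma|}.
\end{equation*}
For a fixed $\sigma$ with $|\sigma|=j\ge 1$, the inner sum is $\sum_{i=0}^{j}\binom{j}{i}(-1)^i=(1-1)^j=0$. Equivalently, the subsets of a nonempty $\sigma$ form a cone: pick $v\in\sigma$ and pair $\gamma$ with $\gamma\triangle\{v\}$. So each inner sum vanishes, hence $P_j(1)=0$, and therefore $(Z-1)$ divides $P_j(Z)$.

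The same computation works from the reindexed form of the previous Proposition. There, $P_j(1)=\sum_{k=0}^{j}(-1)^{j+k}\binom{n}{k}\binom{n-k}{j-k}=\binom{n}{j}\sum_{k}(-1)^{j+k}\binom{j}{k}=0$ for $j\ge1$, and I could present this as a cross-check.

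There is no real obstacle here. The only points to state explicitly are that the hypothesis $j\ge1$ is what makes $\sigma$ nonempty (for $j=0$ we have $P_0=Z^{n^*(\emptyset)}=1$), and that a root at $1$ of an integer polynomial gives divisibility by the monic factor $Z-1$ in $\mathbb{Z}[Z]$.
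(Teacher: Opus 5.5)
Your proof is correct and is essentially the paper's own argument: evaluate the defining double sum at $Z=1$ and note that each inner sum $\sum_{\gamma\subseteq\sigma}(-1)^{|\gamma|}=(1-1)^j$ vanishes for $j\ge 1$. Your cross-check via the reindexed binomial form is a harmless addition, and so is your observation that $Z-1$ is monic, so the divisibility holds in $\mathbb{Z}[Z]$.
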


\begin{proof}
Recall
\begin{equation*}
P_j(Z)
= (-1)^j \sum_{|\sigma|=j} \sum_{\gamma \subseteq \sigma}
(-1)^{|\gamma|} Z^{n_N^*(\gamma)}.
\end{equation*}
Evaluate at $Z = 1$:
\begin{equation*}
P_j(1)
= (-1)^j \sum_{|\sigma|=j} \sum_{\gamma \subseteq \sigma}
(-1)^{|\gamma|}.
\end{equation*}
For fixed $\sigma$,
\begin{equation*}
\sum_{\gamma \subseteq \sigma} (-1)^{|\gamma|}
= \sum_{k=0}^j \binom{j}{k} (-1)^k
= (1 - 1)^j = 0
\end{equation*}
for $j\ge 1$. Hence $P_j(1)=0$, so $(Z - 1)$ divides $P_j(Z)$.
\end{proof}
\subsection{Connection to linear codes}

Let $C$ be a linear $q$-ary code of length $n$, and $M$ the matroid associated to a generator matrix of $C$.
A main result of \cite{JohnsenRoksvoldVerdure} is:

\begin{lemma}[Weight enumerators as evaluations of $P_j(Z)$]  \label{connection}
Let
\begin{equation*}
C_m := C \otimes_{\F_q} \F_{q^m}.
\end{equation*}
Then, for each $j=0,1,\dots,n$,
\begin{equation*}
A_{C_m,j}=P_j(q^m),
\end{equation*}
where $A_{C_m,j}$ denotes the number of codewords of Hamming weight $j$ in $C_m$.
\end{lemma}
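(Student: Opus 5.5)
The plan is to count codewords of $C_m$ by their supports and then apply Möbius inversion on the Boolean lattice of subsets of $E$. Let $G$ be a $k\times n$ generator matrix of $C$, with columns $g_1,\dots,g_n$, and let $M$ be its column matroid. A codeword of $C_m$ has the form $c=xG$ with $x\in\F_{q^m}^k$. The $i$-th coordinate of $c$ is $x\cdot g_i$.

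\textbf{Step 1: codewords vanishing on a prescribed set.} For $\tau\subseteq E$, let $V(\tau)$ be the set of codewords of $C_m$ whose coordinates are zero on $\tau$.
\begin{itemize}
\item Since $G$ has rank $k=r(E)$, the map $x\mapsto xG$ is injective.
\item Hence $V(\tau)$ corresponds to the solutions $x$ of $x\cdot g_i=0$ for $i\in\tau$.
\item This system has rank $r(\tau)$ over $\F_q$, and the rank is unchanged over $\F_{q^m}$.
\item Therefore $|V(\tau)|=q^{m(r(E)-r(\tau))}$.
\end{itemize}
Setting $\tau=E\setminus\gamma$ and using $n^*(\gamma)=r(E)-r(E\setminus\gamma)$ (Remark~\ref{rem}), the number of codewords whose support lies inside $\gamma$ is $Z^{n^*(\gamma)}$ evaluated at $Z=q^m$.

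\textbf{Step 2: exact supports by inclusion--exclusion.} Write $f(\sigma)$ for the number of codewords with support exactly $\sigma$, and $g(\gamma)=q^{m\,n^*(\gamma)}$ for the number with support contained in $\gamma$. Then
\begin{equation*}
g(\gamma)=\sum_{\sigma\subseteq\gamma}f(\sigma).
\end{equation*}
Möbius inversion on the Boolean lattice gives
\begin{equation*}
f(\sigma)=\sum_{\gamma\subseteq\sigma}(-1)^{|\sigma|-|\gamma|}g(\gamma).
\end{equation*}

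\textbf{Step 3: sum over supports of size $j$.} Summing $f(\sigma)$ over all $\sigma$ with $|\sigma|=j$, and using $(-1)^{|\sigma|}=(-1)^j$, gives exactly
\begin{equation*}
A_{C_m,j}=(-1)^j\sum_{|\sigma|=j}\sum_{\gamma\subseteq\sigma}(-1)^{|\gamma|}q^{m\,n^*(\gamma)}=P_j(q^m),
\end{equation*}
which is Definition~\ref{weightj} evaluated at $Z=q^m$.

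\textbf{Main obstacle.} The only delicate point is in Step 1: checking that the rank of the columns indexed by $\tau$ does not change under the field extension. This holds because rank is determined by the nonvanishing of minors, and minors are computed in $\F_q$. One also needs that the matroid of $C_m$ (with generator matrix $G$ viewed over $\F_{q^m}$) is the same as $M$, which follows from the same observation.
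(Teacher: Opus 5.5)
Your proof is correct and complete. The three ingredients do all the work: $|V(E\setminus\gamma)| = q^{m(r(E)-r(E\setminus\gamma))} = q^{m\,n^*(\gamma)}$, invariance of column ranks under $\F_q\subseteq\F_{q^m}$, and M\"obius inversion on the Boolean lattice. Together they reproduce Definition~\ref{weightj} at $Z=q^m$.

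The paper gives no argument for this lemma; it imports it as a main result of Johnsen--Roksvold--Verdure. Your proof therefore makes this step self-contained, by the standard elementary route.

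It is worth contrasting this with how the paper counts codewords in Proposition~\ref{prop:ACj-flats}. There, words are grouped by exact support, and Lemma~\ref{lem:supports-cycles} restricts attention to cycles $\sigma$ of $M^*$, i.e.\ complements of flats $F$. The number of full-support words of the shortened code is then identified with $p(M/F,q^m)$. That argument presupposes the case $j=n$ of the present lemma, via Remark~\ref{specialcase}. Your Step~2 supplies this directly: taking $\sigma=E$ gives $f(E)=\sum_{\gamma\subseteq E}(-1)^{n-|\gamma|}q^{m\,n^*(\gamma)}=p(M,q^m)$, and applying the same computation inside a shortened code gives the contraction version.

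Conversely, your sum runs over all $j$-subsets $\sigma$ and does not exploit that non-cycles contribute zero. The flat-indexed organization is what the paper needs for its M\"obius, Whitney and Orlik--Solomon formulas. For the lemma itself, however, your inclusion--exclusion is the shorter and more transparent proof.
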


\begin{definition}
For a linear code $C$, let $A_{C,j}(Z)$ be the generalized weight polynomial $P_j(Z)$ for the matroid $m$ associated to a generator matrix of $C$.
\end{definition}


\subsection{Supports, flats, and contractions}

\begin{lemma}[Supports and cycles]\label{lem:supports-cycles}
Let $C_Q = C \otimes_{\F_q} \F_Q$, where $\F_Q/\F_q$ is any finite extension, and $M$ the matroid of a generator matrix of $C$. For a codeword of $C_Q$ with support $\sigma \subseteq E$, the following hold:
\begin{enumerate}
\item $\sigma$ is a \emph{cycle} of $M^*$ (i.e.\ inclusion-minimal among subsets with a given nullity).
\item Equivalently, $F := E \setminus \sigma$ is a flat of $M$.
\end{enumerate}

\end{lemma}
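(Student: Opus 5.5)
The plan is to work directly with a generator matrix and prove statement 2 first, then deduce statement 1 from it by a rank computation in the dual matroid. Let $G$ be a $k\times n$ generator matrix of $C$ with columns $g_1,\dots,g_n\in\F_q^k$, so that $M$ is the vector matroid of these columns: $r(X)=\dim_{\F_q}\operatorname{span}\{g_i : i\in X\}$. Every codeword of $C_Q$ has the form $c=xG$ with $x\in\F_Q^k$, so $c_i = x\cdot g_i$. Its zero set is $F=E\setminus\sigma=\{i : x\cdot g_i=0\}$.

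\emph{Step 1: $F$ is a flat.} Take $e\notin F$ and suppose, for contradiction, that $r(F\cup\{e\})=r(F)$. Then $g_e$ lies in the $\F_q$-span of $\{g_i : i\in F\}$. Write $g_e=\sum_{i\in F}a_i g_i$ with $a_i\in\F_q\subseteq\F_Q$. By linearity, $x\cdot g_e=\sum a_i\,(x\cdot g_i)=0$, so $e\in F$, which is a contradiction. Hence $r(F\cup\{e\})>r(F)$ for every $e\notin F$. By monotonicity (R2), $r(Y)>r(F)$ for every strict superset $Y\supsetneq F$, so $F$ is a flat in the sense of Definition~\ref{basic}. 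The only subtle point is that the rank is computed over $\F_q$ while $x$ has entries in $\F_Q$. This causes no trouble, because the linear dependence used has coefficients in $\F_q$, and the dot product is $\F_Q$-linear in the second argument after extending scalars.

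\emph{Step 2: flats of $M$ correspond to cycles of $M^*$ via complements.} By Definition~\ref{dual},
\begin{equation*}
n^*(X)=|X|-r^*(X)=r(E)-r(E\setminus X).
\end{equation*}
Let $\sigma=E\setminus F$ and take a strict subset $Y\subsetneq\sigma$. Then $E\setminus Y\supsetneq F$, so $r(E\setminus Y)>r(F)$ because $F$ is a flat. Therefore
\begin{equation*}
n^*(Y)=r(E)-r(E\setminus Y)<r(E)-r(F)=n^*(\sigma).
\end{equation*}
This says exactly that $\sigma$ is a cycle of $M^*$. The same computation run backwards shows that if $\sigma$ is a cycle of $M^*$, then $E\setminus\sigma$ is a flat of $M$. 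This gives the stated equivalence of 1 and 2.

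I expect no real obstacle. The one place needing care is Step 1, where one must make sure the matroid of $C$ is defined over $\F_q$ while the codeword lives over the extension field. This is handled by using only $\F_q$-dependences among the columns, as above.
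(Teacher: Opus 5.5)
Your proof is correct, but it runs the argument on the opposite side of duality from the paper.

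\textbf{The paper's route.} The paper takes a parity-check matrix $H$, whose column matroid is $M^*$. It reads the codeword as a linear relation $\sum_{e\in\sigma} c_e H_e=0$ with every $c_e\neq 0$. It then shows that no $e\in\sigma$ can satisfy $n^*(\sigma\setminus\{e\})=n^*(\sigma)$, because that would make $e$ a coloop of $M^*|\sigma$ and force $c_e=0$. Monotonicity of nullity then gives statement 1, and statement 2 is simply attributed to ``matroid duality''.

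\textbf{Your route.} You stay with the generator matrix, where $M$ is actually defined. You observe that the zero set $F=\{i : x\cdot g_i=0\}$ of the functional $x$ is closed under $\F_q$-linear span, hence is a flat. You then get statement 1 from the identity $n^*(X)=r(E)-r(E\setminus X)$.

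\textbf{What each buys.} Your version never needs the fact that $H$ represents $M^*$, and it never mentions coloops. It also makes the passage from $\F_q$ to $\F_Q$ transparent, since only $\F_q$-dependences among the $g_i$ are used. The paper, by contrast, tacitly relies on ranks (hence coloop status) being unchanged under extension of scalars when it compares an $\F_Q$-relation with the $\F_q$-matroid $M^*$. Your version also proves the equivalence of 1 and 2 explicitly rather than by appeal to duality.

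The paper's route, in turn, exhibits $\sigma$ directly as the support of a dependency among the columns of $H$. That is the cycle-of-$M^*$ viewpoint the paper uses afterwards, in the cycle-poset formula of Theorem~\ref{thm:Mobius} and the method of elongations. It is also the natural viewpoint when a code is presented by a parity-check matrix.
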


\begin{proof}
Let $H$ be a parity-check matrix for $C$, so its column matroid is $M^*$. If $c \in C_Q$ has support $\sigma$, then 
\begin{equation}
\sum_{e \in \sigma} c_e H_e = 0,
\end{equation}
where every $c_e \neq 0$.

Fix $e \in \sigma$. Assume that  
\begin{equation}
n^*(\sigma \setminus \{e\}) = n^*(\sigma).
\end{equation}
Thus $e$ is a coloop of the restriction $(M^* | \sigma)$. 
Equivalently, every linear relation among the columns indexed by $\sigma$ has coefficient $0$ at $e$. This contradicts the displayed relation, whose coefficient $c_e$ is nonzero.

Hence, for every $e \in \sigma$, 
\begin{equation}
n^*(\sigma \setminus \{e\}) < n^*(\sigma).
\end{equation}
If $Y \subsetneq \sigma$, choose $e \in \sigma \setminus Y$. By monotonicity of nullity under inclusion, 
\begin{equation}
n^*(Y) \le n^*(\sigma \setminus \{e\}) < n^*(\sigma).
\end{equation}
Therefore $\sigma$ is a cycle of $M^*$. By matroid duality, $E \setminus \sigma$ is a flat of $M$.

\end{proof}

\begin{proposition}[Decomposition over cycles / flats]\label{prop:ACj-flats}
Let $C$ be a linear code with associated matroid $M$. Then
\begin{equation}\label{eq:ACj-sum-p}
A_{C,j}(Z)
\;=\;
\sum_{\sigma} p\big( (M^*|\sigma)^*, Z\big)
\;=\;
\sum_{F} p(M/F,Z),
\end{equation}
where
\begin{itemize}
\item the first sum runs over all cycles $\sigma$ of $M^{\ast}$ of cardinality $j$,
\item the second sum runs over all flats $F$ of $M$ of cardinality $n-j$,
\item $M/F$ denotes the contraction of $M$ at $F$.
\end{itemize}
\end{proposition}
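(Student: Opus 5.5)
The plan is to start from Definition~\ref{weightj}, group the terms according to the outer index $\sigma$, and recognize each inner sum as a characteristic polynomial by means of Remark~\ref{specialcase}. Since $A_{C,j}(Z)=P_j(Z)$ for the matroid $M$ of a generator matrix, it is enough to prove both equalities for $P_j(Z)$. This argument works for an arbitrary matroid $M$, so it does not need Lemma~\ref{connection}.

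\textbf{Step 1: each inner sum is a characteristic polynomial.} Fix $\sigma\subseteq E$ with $|\sigma|=j$ and put $N:=M^*|\sigma$, a matroid on $\sigma$. For $\gamma\subseteq\sigma$, the nullity of $\gamma$ in $N$ equals $n^*(\gamma)$, because restriction does not change rank. The nullity of $\gamma$ in $N$ is also the dual nullity of $\gamma$ in $N^*$. I would then apply Remark~\ref{specialcase} to the matroid $N^*$ on the $j$-element ground set $\sigma$. This gives
\begin{equation*}
(-1)^j\sum_{\gamma\subseteq\sigma}(-1)^{|\gamma|}Z^{n^*(\gamma)} \;=\; p(N^*,Z)=p\big((M^*|\sigma)^*,Z\big).
\end{equation*}
Summing over all $\sigma$ with $|\sigma|=j$ yields $P_j(Z)=\sum_{|\sigma|=j}p((M^*|\sigma)^*,Z)$, where for now the sum runs over all $j$-subsets.

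\textbf{Step 2: identify $(M^*|\sigma)^*$ with $M/F$, where $F=E\setminus\sigma$.} I would do this directly from the rank formulas in Definitions~\ref{dual} and~\ref{basic}. Writing $r^*$ for the rank in $M^*$, the dual rank of $Y\subseteq\sigma$ in $N^*$ is
\begin{equation*}
|Y|+r^*(\sigma\setminus Y)-r^*(\sigma).
\end{equation*}
Expanding $r^*$ turns this into
\begin{equation*}
|Y| + \big(|\sigma\setminus Y| + r(E\setminus(\sigma\setminus Y)) - r(E)\big) - \big(|\sigma| + r(F) - r(E)\big).
\end{equation*}
The cardinality terms cancel, and since $E\setminus(\sigma\setminus Y)=F\cup Y$, the expression simplifies to $r(F\cup Y)-r(F)$. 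This is exactly the rank of $Y$ in $M/F$.

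\textbf{Step 3: only flats contribute.} By Definition~\ref{char}, $p(M/F,Z)=0$ whenever $M/F$ has a loop. An element $e\in E\setminus F$ is a loop of $M/F$ exactly when $r(F\cup\{e\})=r(F)$. So $M/F$ is loopless if and only if $F$ is a flat; the reverse direction is also recorded in Proposition~\ref{contractedflats}. Hence the sum collapses to $\sum p(M/F,Z)$ over flats $F$ with $|F|=n-j$, which is the second expression in the statement.

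By Lemma~\ref{lem:supports-cycles} and duality, $F=E\setminus\sigma$ is a flat of $M$ exactly when $\sigma$ is a cycle of $M^*$. The same vanishing therefore restricts the first sum to cycles of $M^*$ of cardinality $j$, and Step 2 shows the two sums agree term by term.

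I expect the main obstacle to be bookkeeping rather than ideas. One point is matching the sign $(-1)^j$ with the complementation $\gamma\leftrightarrow\sigma\setminus\gamma$ in Remark~\ref{specialcase}, now on a ground set of size $j$ rather than $n$. The other is getting the duality identity in Step 2 exactly right. I would carefully check both identities on a small uniform matroid from Example~\ref{unifo}.
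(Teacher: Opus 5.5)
Your argument is correct, but it takes a different route from the paper's proof of this proposition.

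\textbf{The paper's route.} The paper argues through codewords. It counts codewords of $C_Q$ by exact support, and uses Lemma~\ref{lem:supports-cycles} to restrict to supports $\sigma$ that are cycles of $M^*$. It then notes that the codewords supported inside $\sigma$ form the code shortened at $F=E\setminus\sigma$, whose matroid is $M/F$. Finally, it applies the top-weight case $P_n=p(M,Z)$ (Remark~\ref{specialcase} together with Lemma~\ref{connection}) to that shortened code.

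\textbf{Your route.} You work purely from Definition~\ref{weightj}. You regroup by $\sigma$ and read each inner sum as $p((M^*|\sigma)^*,Z)$ via Remark~\ref{specialcase} on a ground set of size $j$. You compute $(M^*|\sigma)^*=M/F$ from the rank formulas, and discard non-flats because $M/F$ then has a loop. This is essentially the paper's later proof of Proposition~\ref{gencontract}. The one difference is that the paper removes non-cycles by pairing $\gamma$ with $\gamma\cup\{e\}$ for a coloop $e$ of $M^*|\sigma$ inside the inner sum. You instead pass to $p(M/F,Z)$ first and use that a loop makes it vanish; this is the same cancellation seen from the dual side.

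\textbf{What each buys.} The paper's proof gives the interpretation that $p(M/F,q^m)$ counts the codewords of $C_m$ with support exactly $E\setminus F$, which is what makes the decomposition meaningful to coding theorists. However, it is tied to representable matroids and depends on Lemma~\ref{connection} and the shortening/contraction correspondence. It also tacitly uses that polynomials agreeing at all $q^m$ coincide. Your proof is self-contained and holds for every matroid.

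\textbf{A small simplification.} You do not need Lemma~\ref{lem:supports-cycles}, which is a statement about codeword supports, for the equivalence ``$\sigma$ is a cycle of $M^*$ iff $E\setminus\sigma$ is a flat of $M$''. It follows immediately from $n^*(X)=r(E)-r(E\setminus X)$ and the definitions, which keeps your argument purely matroid-theoretic.
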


\begin{proof}
By definition, $A_{C,j}(Z)=P_j(Z)$ is the generating polynomial counting codewords of $C_Q$ of Hamming weight $j$ as $Q=q^m$ varies. For each cycle $\sigma$ we will count the codewords with support exactly $\sigma$.
For the special case $\sigma=E$ and $j=n$ this is $p(M,Z)$, by Remark \ref{specialcase}.

For an arbitrary cycle $\sigma$, we see that that every codeword that has its support inside $\sigma$ lies in the code $C'$ which is the code obtained by shortening $C_Q$ at $F=E \setminus{\sigma}.$ It is well known that shortening of a code at a subset $F$ gives a new generator matrix, whose associated matroid is the contraction $M/F.$ 
For this matroid $\sigma$ is the ground set and plays the same role for the shortened code as $E$ plays for $C_Q$. Hence the number of codewords in the shortened code with full support $\sigma$ is determined by $p(M/F,Z).$
Summing over all flats of cardinality $n-j$ gives the desired formula.

\end{proof}

We now reformulate Definition \ref{weightj}:
\begin{proposition}  \label{gencontract}
For any matroid we have: \begin{equation*}
P_j(Z)= \sum_{\sigma} p((M^*|\sigma)^*,Z) = \sum_{F: |F|=n-j} p(M/F,Z),
\end{equation*}
\end{proposition}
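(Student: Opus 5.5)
The plan is a direct computation from Definition \ref{weightj}. I will not use codes, so the statement holds for an arbitrary matroid. First I rewrite each inner sum as the characteristic polynomial of a contraction. Then I show that the non-flat terms vanish. Finally I identify the contraction with the dual of a restriction of $M^*$, which gives the first expression.

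\emph{Step 1 (each inner sum is a characteristic polynomial).} Fix $\sigma\subseteq E$ with $|\sigma|=j$ and put $F=E\setminus\sigma$. As in Remark \ref{specialcase}, Definition \ref{dual} gives $n^*(\gamma)=r(E)-r(E\setminus\gamma)$. For $\gamma\subseteq\sigma$ we have $E\setminus\gamma=F\cup(\sigma\setminus\gamma)$. With $r_c$ the rank function of $M/F$ on the ground set $\sigma$, this gives
\begin{equation*}
n^*(\gamma)=\bigl(r(F\cup\sigma)-r(F)\bigr)-\bigl(r(F\cup(\sigma\setminus\gamma))-r(F)\bigr)=r_c(\sigma)-r_c(\sigma\setminus\gamma).
\end{equation*}
Substitute $Y=\sigma\setminus\gamma$, so that $(-1)^{|\gamma|}=(-1)^{j}(-1)^{|Y|}$. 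Then
\begin{equation*}
(-1)^j\sum_{\gamma\subseteq\sigma}(-1)^{|\gamma|}Z^{n^*(\gamma)}=\sum_{Y\subseteq\sigma}(-1)^{|Y|}Z^{r(M/F)-r_c(Y)}=p(M/F,Z),
\end{equation*}
by the matroid form of Definition \ref{char}. Hence $P_j(Z)=\sum_{|F|=n-j}p(M/F,Z)$, where the sum runs over \emph{all} subsets $F$ of size $n-j$.

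\emph{Step 2 (only flats contribute).} Suppose $F$ is not a flat. Then some $e\in\sigma$ satisfies $r(F\cup\{e\})=r(F)$, so $e$ is a loop of $M/F$. Definition \ref{char} then gives $p(M/F,Z)=0$. Concretely, pairing $Y$ with $Y\triangle\{e\}$ cancels the sum term by term. Only flats $F$ with $|F|=n-j$ survive, which is the second expression.

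\emph{Step 3 (the first expression).} Two things remain. The first is that $(M^*|\sigma)^*=M/F$. This follows from the rank formula of Definition \ref{dual} applied twice: for $Y\subseteq\sigma$,
\begin{equation*}
r_{(M^*|\sigma)^*}(Y)=|Y|+r^*(\sigma\setminus Y)-r^*(\sigma),
\end{equation*}
and expanding $r^*$ in terms of $r$ leaves $r(F\cup Y)-r(F)$. The second is the standard duality that $\sigma$ is a cycle of $M^*$ if and only if $E\setminus\sigma$ is a flat of $M$, already used in Lemma \ref{lem:supports-cycles}. Indeed, $n^*(\sigma\setminus\{e\})<n^*(\sigma)$ unwinds to $r(F\cup\{e\})>r(F)$. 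The sums over cycles $\sigma$ of $M^*$ with $|\sigma|=j$ and over flats $F$ with $|F|=n-j$ therefore match term by term.

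There is no real obstacle here. The points that need care are the sign bookkeeping in Step 1 and the double-dual rank computation in Step 3.
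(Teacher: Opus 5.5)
Your proof is correct and follows essentially the same route as the paper: both identify each inner sum with $p(M/F,Z)=p((M^*|\sigma)^*,Z)$, and both kill the non-flat (equivalently, non-cycle) terms by pairing $\gamma$ with $\gamma\cup\{e\}$, where $e$ is a loop of $M/F$ (equivalently, a coloop of $M^*|\sigma$). The only difference is the order: you first make the identification for every $\sigma$ by an explicit rank computation, so the vanishing becomes the loop criterion of Definition \ref{char}, whereas the paper cancels the non-cycle terms first and then appeals to Remark \ref{rem} for the surviving cycles.
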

\begin{proof}
By Definition \ref{weightj}
\begin{equation*}
P_j(Z)
= (-1)^j \sum_{|\sigma|=j} \sum_{\gamma \subseteq \sigma}
(-1)^{|\gamma|} Z^{n^*(\gamma)}.
\end{equation*}
But in this formula only the subsets $\sigma$ that are cycles for $M^*$ contribute. A reason is that if $\sigma$ is not a cycle, then $M^*|\sigma=M^*\setminus F$ has a coloop $e$ (so $r_c(\{e\})=0$, for $r_c$ the rank function of the contraction matroid $M/F,$ and then in each inner sum we sum over all subsets $\gamma$ not containing $e$ and all $\gamma \cup \{e\}$ for these $\gamma.$ Then the terms $(-1)^{|\gamma|} Z^{n^*(\gamma)}$ and 
$(-1)^{|\gamma \cup \{e\}|}Z^{n^*(\gamma \cup \{e\})}$ cancel each other since the two $n^*$-values are the same.
This is true since the nullity function of $M^*$ is the same as the nullity function of $M^* \setminus F$ when restricted to $E\setminus F$, and hence $n^*(\gamma \cup \{e\})-n^*(\gamma)=r_c(E\setminus \gamma) - r_c(E \setminus (\gamma \cup \{e\})) =r_c(Y \cup \{e\})-r_c(Y)$, for $Y=E \setminus (\gamma \cup \{e\}).$ But $r_c(Y \cup \{e\}\le r_c(Y)+r_c(\{e\})=r_c(Y)+0=r_c(Y)$ by axiom (R3) for rank functions.

But, again,  for each fixed flat $F=E \setminus \sigma,$
we have that the nullity function of the restriction matroid $(M^*)|_{\sigma}$ is the same as (the nullity function)  $n^*$ (of the entire $M^*$). Therefore the inner sum $(-1)^j\sum_{\gamma \subseteq \sigma}
(-1)^{|\gamma|} Z^{n^*(\gamma)}$ is identical with the characteristic polynomial of $((M^*)|_{\sigma})^*=(M^*\setminus F)^*=M/F$ by Definition \ref{weightj} and Remark \ref{rem}.
\end{proof}

\begin{remark} \label{characterization}
The converse of Lemma \ref{lem:supports-cycles} does not hold, in the sense that for a linear code over a fixed field not all cycles of the matroid $M^*$ of some parity check matrix necessarily are supports of codewords. An easy example is the binary code generated by $(1,0,1)$ and $(0,1,1)$. Here $M=U(2,3)$, so $M^*=U(1,3),$ and hence we see that $E=\{1,2,3\}$  is a cycle of nullity $2$. But it is not the support of any word in the binary code. But for any non-degenerate linear code the number of points with support $E$ of an extension code of $C$ of cardinality $Q$ is given by a polynomial  $P_n(Q)$, which has only finitely many zeroes.
Hence a cofinite set of the  extensions of the code contain at least one codeword with support $E.$

For each cycle $\sigma$ of $M^*$ the argument concerning a cofinite set of the extensions can be applied to the shortening of the code at $E \setminus \sigma$ in the same way.

In general, for a linear code $C$ these observations give a characterizations of the cycles of $M^*$ as the subsets of $E$ such that for a  cofinite set of extensions of $C$ there is at least one word in that extension code that has support $\sigma$. This clearly is also equivalent to that there exists at least one extension code such that there is at least one word in that code, with support $\sigma.$
\end{remark}

\subsection{Möbius function formulations }

\begin{theorem}[Generalized weight polynomials via Möbius numbers]\label{thm:Mobius}
For any matroid $M$ of rank $r(M)$ on $E$ with $|E|=n$, and for $j=0,\dots,n$,
\begin{equation}\label{eq:Mobius-flats}
P_{j}(Z)
=
\sum_{\substack{F_1 \subseteq F_2 \\ |F_1| = n-j}}
\mu(F_1,F_2)\, Z^{\,\rank(M)-\rank(F_2)},
\end{equation}
where the sum runs over flats $F_1\subseteq F_2$ in $L(M)$ and $\mu(F_1,F_2)$ is the Möbius function of the interval $[F_1,F_2]$.
Equivalently, for cycles of $M^*$ we have:
\begin{equation} \label{eq:Mobius-cycles}
P_{j}(Z) 
=  
\sum_{\substack{\sigma_2\subseteq\sigma_1\\|\sigma_1|=j}}
\mu(\sigma_2,\sigma_1)\, Z^{\,n^*(\sigma_2)},
\end{equation}
where the Möbius function is taken in the poset of cycles of $M^*$ and $n^*(\sigma_2)$ as usual denotes the nullity of $\sigma_2$ for $M^*.$
\end{theorem}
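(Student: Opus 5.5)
The plan is to combine Proposition \ref{gencontract} with the lattice description of the characteristic polynomial in Definition \ref{char}. By Proposition \ref{gencontract},
\begin{equation*}
P_j(Z)=\sum_{F_1:\,|F_1|=n-j} p(M/F_1,Z),
\end{equation*}
where $F_1$ runs over flats of $M$ of cardinality $n-j$. By Proposition \ref{contractedflats}, each contraction $M/F_1$ is loopless. Definition \ref{char} then lets us replace $p(M/F_1,Z)$ by the lattice characteristic polynomial $p(L(M/F_1),Z)=\sum_{x\in L(M/F_1)}\mu(\hat 0,x)\,Z^{r(M/F_1)-r(x)}$.

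The key step is to identify $L(M/F_1)$ with the interval $[F_1,E]$ in $L(M)$ via $F_2\mapsto F_2\setminus F_1$. Proposition \ref{contractedflats} shows that flats of $M/F_1$ are exactly the sets $F_2\setminus F_1$ with $F_2\supseteq F_1$ a flat of $M$. The map is inclusion preserving in both directions, so it is a poset isomorphism.

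Since the Möbius function is defined recursively from the order alone (Definition \ref{Mob}), an isomorphism of intervals preserves it. The bottom element of $L(M/F_1)$ is $\emptyset$, which corresponds to $F_1$, so $\mu_{L(M/F_1)}(\hat 0,F_2\setminus F_1)=\mu_{L(M)}(F_1,F_2)$. For the exponent we use the contraction rank formula $r_c(Y)=r(F_1\cup Y)-r(F_1)$. It gives
\begin{equation*}
r(M/F_1)-r_c(F_2\setminus F_1)=\bigl(r(E)-r(F_1)\bigr)-\bigl(r(F_2)-r(F_1)\bigr)=r(M)-r(F_2).
\end{equation*}
Substituting into the sum over $F_1$ yields \eqref{eq:Mobius-flats}.

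For \eqref{eq:Mobius-cycles} I would pass to complements. By Lemma \ref{lem:supports-cycles} and the duality used there, $\sigma$ is a cycle of $M^*$ exactly when $E\setminus\sigma$ is a flat of $M$. This justification is purely matroidal: $n^*(\sigma)=r(E)-r(E\setminus\sigma)$, so $\sigma$ is a cycle of $M^*$ iff $E\setminus\sigma$ is a flat.

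Complementation $F\mapsto E\setminus F$ is therefore an order-reversing bijection from flats of $M$ onto cycles of $M^*$. Order reversal carries intervals $[F_1,F_2]$ to intervals $[\sigma_2,\sigma_1]$ with $\sigma_i=E\setminus F_i$. Reversing order does not change the Möbius value of an interval, since $\mu_P(x,y)=\mu_{P^{op}}(y,x)$, which follows from the two-sided recursion, i.e.\ the standard fact that $\mu$ is also the right inverse of $\zeta$ in the incidence algebra. Hence $\mu(F_1,F_2)=\mu(\sigma_2,\sigma_1)$, and $|F_1|=n-j$ becomes $|\sigma_1|=j$. The exponent translates by the same identity $n^*(\sigma_2)=r(E)-r(E\setminus\sigma_2)=r(M)-r(F_2)$, which gives \eqref{eq:Mobius-cycles}.

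The main obstacle is bookkeeping rather than depth. Three points need care: that Definition \ref{char}'s equality $p(M,\lambda)=p(L,\lambda)$ applies, which holds because $M/F_1$ is loopless; that the top element of $L(M/F_1)$ has rank $r(M/F_1)$; and the symmetry $\mu_P(x,y)=\mu_{P^{op}}(y,x)$. The last is the only point not already in the paper, and I would cite it as the standard incidence-algebra fact.
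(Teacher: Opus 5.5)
Your proposal is correct and follows essentially the same route as the paper. Both arguments write $P_j$ as a sum of $p(M/F_1,Z)$ over flats of size $n-j$, identify $L(M/F_1)$ with the interval $[F_1,E]$ to transfer Möbius values and ranks, and then complement to pass to cycles of $M^*$. Your citation of Proposition \ref{gencontract} (valid for arbitrary matroids) instead of the code-based Proposition \ref{prop:ACj-flats}, and your explicit justification of $\mu_P(x,y)=\mu_{P^{op}}(y,x)$, make your version slightly more rigorous than the paper's.
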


\begin{proof}
We start from
\begin{equation*}
P_{j}(Z) = \sum_{F_1:\,|F_1|=n-j} p(M/F_1,Z),
\end{equation*}
from Proposition~\ref{prop:ACj-flats}. Let $L=L(M)$ and consider the characteristic polynomial of the minor $M/F_1$.
The fact that $F_1$ is a flat gives that $M/F_1$ is loop-free. Hence we may use  usual lattice-of-flats form from Definition \ref{char}:
\begin{equation*}
p(M/F_1,Z) = \sum_{F_{2,c}} \mu(\emptyset,F_{2,c}) Z^{\rank(M/F_1)-\rank_{M/F_1}(F_{2,c})}.
\end{equation*}
Here we sum over all flats $F_{2,c}$ of the contraction matroid $M /F_1$, and $\mu$ is the Möbius function of the lattice of flats of $M/F_1.$  by Definition \ref{basic} ranks in $M/F_1$ satisfy
\begin{equation*}
r_c(F_{2,c}) = r(F_2) - r(F_1),
\end{equation*}
and moreover all flats $F_{2,c}$ of $M /F_1$ are of the form $F_2\setminus F_1$.
We then have 
\begin{equation*}
\rank(M/F_1) - \rank_{M/F_1}(F_{2,c})
=
\big(\rank(M)-\rank(F_1)\big) - \big(\rank(F_2)-\rank(F_1)\big)
=
\rank(M)-\rank(F_2),
\end{equation*}
and $\mu(\emptyset,F_{2,c})$ for the lattice of flats of $M/F_1$ is equal to $\mu(F_1,F_{2})$ for the lattice of flats of $M$, by Proposition  \ref{contractedflats}. All in all, 
\begin{equation*}
p(M/F_1,Z) = \sum_{\substack{F_2\in L\\F_1\subseteq F_2}} \mu(F_1,F_2)\, Z^{\rank(M)-\rank(F_2)}.
\end{equation*}
Now sum over all flats $F_1$ with $|F_1|=n-j$:
\begin{equation*}
P_{j}(Z)
=
\sum_{\substack{F_1:\,|F_1|=n-j}} p(M/F_1,Z)
=
\sum_{\substack{F_1:\,|F_1|=n-j}}
\sum_{\substack{F_2\in L\\F_1\subseteq F_2}}
\mu(F_1,F_2)\, Z^{\rank(M)-\rank(F_2)}.
\end{equation*}
Converting  this double sum to a single sum over all inclusion relations of $2$ flats of $M$, we obtain the statement of the theorem.
For
the second statement note that flats in $M$ correspond to complements of cycles in $M^*$, so $F_i = E\setminus \sigma_i$. This reverses inclusion, %
and $\mu(F_1,F_2)=\mu(\sigma_2,\sigma_1)$, and $r(E)-r(F_2)=n^*(E-F_2)=n^*(\sigma_2)$ by Definition \ref{dual} of the rank function of the dual matroid.
\end{proof}

\subsection{The method of elongations} \label{elonga}
Given a matroid $N$, the $l$'th elongation matroid $N^{(l)}$ of the matroid $N$ with rank function $r$ and nullity function $n$ is the matroid whose nullity function is :
$n^{(l)}(\sigma)=\max(0,n(\sigma)-l),$, for $l=0,\cdots l(E)=|E|-r(E).$  The cycles of $N^{(l)}$ are the same as those of $N$, except that the ones with nullity $1$ for $N$ no longer are cycles for $N^{(l)}$. By convention $N^{(0)}=N.$
In \cite{JohnsenRoksvoldVerdure} one expresses $P_j(Z)$ in terms of Möbius numbers of cycle posets of the elongations $N^{(\ell)}$ of the matroid $N=M^*$:
\begin{equation}  \label{weightelong}
P_j(Z) = \sum_{\ell} (\varphi^{(\ell)}_j - \varphi^{(\ell-1)}_j) Z^{\ell}, \textit{ where}
\qquad
\varphi^{(\ell)}_j = \sum_i (-1)^i\nu^{(\ell)}_{i,j},
\end{equation}
where the $\nu^{(\ell)}_{i,j}$ counts the sum of absolute values of Möbius numbers $\mu(\emptyset,X)$  of fixed size $j$ and nullity $i$  in the $\ell$-th elongation. Here $\mu$ refers to the poset of cycles for $(M^*)^{(\ell)}.$ 

\begin{remark}
Our formula~\eqref{eq:Mobius-flats} is  meant to be "complementary" to this already well-known method of elongations, which we just sketch briefly here: Instead of summing over cycles in elongations, like in \cite[Theorem 5.1]{JohnsenRoksvoldVerdure} and \cite[Theorem 10.4]{MartinezValenciaVillarreal} in a more general setting, we sum directly over \emph{pairs of flats} in the original lattice with a cardinality constraint on the lower flat. Both approaches rely on Möbius functions, but on different posets. Moreover the one in Theorem \ref{thm:Mobius}, as we shall see,  is particularly adapted to Orlik-Solomon algebra and Whitney homology interpretations.
\end{remark}

{\rm One may call the $\nu^{(l)}_{i,j}$ virtual $\mathbf{N}$-graded Betti numbers, following the notation of \cite{JohnsenPratiharVerdureGabidulin}.  There are two results that make the method of elongation particularly applicable: 
By for example \cite[Page 57]{StanleyCM} these numbers, defined  here as Möbius numbers of posets of cycles,  are  in fact equal to  actual $\mathbf{N}$-graded Betti-numbers of minimal resolutions of Stanley-Reisner rings of the  independence complexes of the hierarchy of elongation matroids that we considered above. Concretely:
If $X$ is a cycle with $n^{(l)}(X)=i$ for the matroid $(M^*)^{(\ell)},$ then
$$|\mu(\emptyset,X)|=\nu^{(l)}_{i,X}=\beta^{(l)}_{i,X},$$
for the $\mathbb{N}^{\mathbb{N}}$-graded Betti number $\beta^{(l)}_{i,X}$ of any minimal resolution of the Stanley-Reisner ring of the independence (simplicial) complex of 
$(M^*)^{(\ell)}.$ 

Secondly, in view of this interpretation, the $\phi_j^{(l)}$ satisfy the Boij-Söderberg equations (\cite[Formula (2.1)]{BS}):

\begin{equation}\label{eq:BS-phi}
\sum_{j=0}^nj^s\phi^{(l)}_{j}=0 \quad \textrm{for } s=0,1,\ldots,k-1-l
\end{equation}}
where $k=r(M).$
These equations make it possible to find all the $\phi^{(l)}_{j}$ if one knows just a few of them. 

Summing up this section so far we have: An advantage of Formulas (\ref{eq:Mobius-flats}) and (\ref{eq:Mobius-cycles}), compared to those where elongations are used, is that they only refer to the bivariate Möbius 
numbers of the geometric lattice of flats of a \underline{single} matroid. An advantage of \cite[Theorem 5.1]{JohnsenRoksvoldVerdure}, and that of \cite[Theorem 10.4]{MartinezValenciaVillarreal}, compared with Formulas  (\ref{eq:Mobius-flats})  and (\ref{eq:Mobius-cycles}) is that all Möbius numbers one uses are essenially univariate and  of type $\mu(0,x)=\mu(x)$, for various posets of cycles, and that these numbers  are also Betti numbers of Stanley-Reisner rings, and under fortunate circumstances can be more easily found using algebraic techniques,  and well-known identities between such Betti numbers. It is not a main point of the present paper to use the method of elongations,
but rather to demonstrate that there also are other ways to find the generalized weight polynomials.

\section{The Orlik--Solomon algebra}

\begin{definition}
An atom of a matroid is a flat of rank $1$. 
Let $M$ be a matroid with  set of atoms $A(M)$. The exterior algebra $\Ecal$ over a field $\mathbb{K}$ of a geometric lattice $L=L(M)$ is
\begin{equation*}
\Ecal = \bigwedge (e_a \mid a \in A(M)),
\end{equation*}
where the generators satisfy $e_a \wedge e_b = - e_b \wedge e_a$ and $e_a^2 = 0$.
\end{definition}

\begin{definition}
For a monomial $e_S = e_{s_1} e_{s_2} \cdots e_{s_\ell} \in \Ecal$, define the linear map $\delta$ by
\begin{equation*}
\delta(e_S) = \sum_{i=1}^{\ell} (-1)^{i-1} e_{s_1} \cdots \widehat{e_{s_i}} \cdots e_{s_\ell},
\end{equation*}
where $\widehat{e_{s_i}}$ denotes omission of the factor. Extend $\delta$ linearly to all of $\Ecal$.
\end{definition}

\begin{definition}
Let $\mathcal{C}(M)$ denote the collection of circuits (inclusion minimal dependent subsets $C$ of atoms, where a dependent set of atoms is a set  spanning a flat of smaller rank than $|C|$). The Orlik--Solomon ideal $\Ical$ is
\begin{equation*}
\Ical = \left\langle \delta(e_S) : S \in \mathcal{C}(M) \right\rangle \subseteq \Ecal.
\end{equation*}
\end{definition}

\begin{definition}
The Orlik--Solomon algebra of $M$ is the graded quotient algebra
\begin{equation*}
\OS(M) = \Ecal / \Ical.
\end{equation*}
\end{definition}

\begin{definition}
We recall that a simple matroid is a loop-less matroid with all singletons sets being flats, so that there are no circuits of cardinality two. The simplification $M_S$ of a matroid $M$ is a (simple) matroid with ground set $A(M)$, where the rank of a subset $B$ of $A(M)$ is the smallest $M$-rank of a flat containing the set in $E(M)$ corresponding to the elements forming $B$.
\end{definition}

\begin{remark}
By \cite[Proposition 7.4.1.]{Bjorner} the simplicial complexes $BC_{\omega}(M)$ and $BC_{\omega}(M_s)$ are isomorphic. 
By \cite[Theorem 7.4.6]{Bjorner} this also implies that the characteristic polynomial of a loopless matroid is equal to that of its simplification. For a simple matroid  the ground set $E(M)$ and the set of atoms $A(M)$ can be identified, and $\mathcal{C}(M)$ can be identified with the set of circuits of the matroid $M$ in the usual sense.
\end{remark}

\begin{proposition}[Flat-wise OS decomposition and Möbius values] \label{prop:OS-decomp}
Let $L=L(M)$ be the lattice of flats of a loop-free matroid $M$, and $\OS(M)$ its Orlik--Solomon algebra over a field $K$.
\begin{enumerate}
\item As a $K$-vector space,
\begin{equation*}
\OS(M) = \bigoplus_{F \in L} \OS_F,
\end{equation*}
where $\OS_F$ is spanned by NBC monomials $e_S$ with closure $\operatorname{cl}(S)=F$.
\item For each flat $F$,
\begin{equation*}
\dim_K \OS_F = (-1)^{\rank(F)} \mu(\hat{0},F),
\end{equation*}
where $\mu$ is the Möbius function of $L$.
\end{enumerate}
\end{proposition}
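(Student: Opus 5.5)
Since the characteristic polynomial and the Orlik--Solomon algebra do not change when a loopless matroid is replaced by its simplification $M_S$ (see the preceding Remark), we may assume $M$ is simple and identify $E(M)$ with the atoms $A(M)$. The plan has three steps.

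First, we show that the NBC monomials form a basis of $\OS(M)$. Fix the linear order $\omega$ on $E$.

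\emph{Spanning.} We use the circuit relations. If $S$ contains a broken circuit $C\setminus\{c\}$ with $c=\min_\omega C$, then $e_S$ is a multiple of $e_{C\setminus\{c\}}$. Up to sign, $e_{C\setminus\{c\}} = e_c\,\delta(e_{C\setminus\{c\}})$ modulo $\Ical$, because
\begin{equation*}
\delta(e_C)=e_{C\setminus\{c\}}-e_c\,\delta(e_{C\setminus\{c\}}).
\end{equation*}
This rewrites $e_{C\setminus\{c\}}$ as a sum of monomials that are lexicographically smaller, since $c$ replaces a larger element. Induction on this order then reduces every monomial to a combination of NBC monomials.

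\emph{Independence.} We argue by counting dimensions. We will show that the total number of NBC sets equals $\sum_F |\mu(\hat0,F)|$, which is the dimension of $\OS(M)$ given by the classical Orlik--Solomon deletion--restriction argument. Alternatively, we construct the standard map to the Orlik--Terao/Brieskorn-type decomposition. This independence step is the main obstacle. I would first try deletion--contraction. Let $e$ be the $\omega$-largest element. Then there is a short exact sequence
\begin{equation*}
0\to \OS(M\setminus e)\to \OS(M)\to \OS(M/e)[-1]\to 0,
\end{equation*}
in which the second map sends $e_S e_e\mapsto e_S$. The NBC sets split in the same way: those not containing $e$, and those containing $e$, which correspond to NBC sets of $M/e$. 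Comparing dimensions by induction then gives independence.

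Second, we establish the grading by flats. We need the ideal $\Ical$ to be homogeneous for the $L$-grading $\Ecal=\bigoplus_F \Ecal_F$, where $\Ecal_F$ is spanned by the $e_S$ with $\operatorname{cl}(S)=F$. This holds because every term $e_{C\setminus\{c_i\}}$ of $\delta(e_C)$ has the same closure $\operatorname{cl}(C)$: removing one element from a circuit does not change the closure. Products with further generators also preserve homogeneity, since $\operatorname{cl}(S\cup T)$ depends only on $\operatorname{cl}(S)$ and $T$. Hence $\OS(M)=\bigoplus_F \OS_F$. The relations used in the spanning argument are homogeneous, and independent NBC sets are independent in the matroid, so the NBC monomials with closure $F$ form a basis of $\OS_F$. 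This gives part (1).

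Third, for part (2) we need $\#\{\text{NBC sets } S : \operatorname{cl}(S)=F\}=(-1)^{r(F)}\mu(\hat0,F)$.

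\emph{Counting over all subsets.} By the Whitney/Rota cross-cut theorem,
\begin{equation*}
\mu(\hat0,F)=\sum_{X\subseteq E,\ \operatorname{cl}(X)=F}(-1)^{|X|}.
\end{equation*}

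\emph{Cancelling non-NBC sets.} Next we use the standard sign-reversing involution, exactly as in the cone argument for Euler characteristics. On the sets $X$ with closure $F$ that contain a broken circuit, toggle the minimal element $c$ of the lexicographically first broken circuit $C\setminus\{c\}\subseteq X$. Toggling $c$ preserves the closure, because $c\in\operatorname{cl}(C\setminus\{c\})$, and it changes $|X|$ by one. So these terms cancel in pairs.

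\emph{Remaining terms.} What is left are the NBC sets with closure $F$. These are independent, so each has size $r(F)$ and contributes $(-1)^{r(F)}$. This yields (2).

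Alternatively, part (2) follows by applying the count to the restriction $M|F$, using $\OS_F\cong \OS(M|F)_{\text{top}}$, together with the identity $\dim\OS(M)_{\text{top}}=|\mu|$ from Step 1.

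One care point is that the involution must be well defined after toggling, that is, the same broken circuit must be selected again. The standard fix is to choose $c$ as the minimal element over all circuits $C$ whose broken circuit lies in $X\cup\{c\}$; this choice is stable under the toggle.
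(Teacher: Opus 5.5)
Your spanning argument, the $L$-homogeneity of $\Ical$ (Step 2), and the cross-cut/involution count (Step 3) are sound. Step 3 is a nice elementary replacement for the paper's topological route, which identifies the NBC count with $|\mu(\hat{0},F)|$ through shellability of $(\hat{0},F)$. However, there is a genuine gap at the point where the content of the proposition lies: linear independence of the NBC monomials in $\OS(M)$. Without it you only get $\dim \OS_F \le \#\{S \text{ NBC} : \operatorname{cl}(S)=F\} = (-1)^{\rank(F)}\mu(\hat{0},F)$, so part (2) is not proved.

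Your first option just quotes $\dim\OS(M)=\sum_F|\mu(\hat{0},F)|$, which is essentially the statement itself. Your second option asserts the deletion--contraction sequence without proof. Well-definedness and surjectivity of $j$, and $j\circ i=0$, are routine. But your dimension count needs injectivity of $i$, i.e.\ $\Ical(M)\cap\Ecal(M\setminus e)=\Ical(M\setminus e)$: relations coming from circuits through $e$ must create no new relations among $e$-free monomials. That is the hard part. In this paper the deletion--contraction theorem is proved \emph{from} the present proposition (injectivity of $i$ is deduced from the NBC basis), so using it here is circular. The paper itself simply cites the Orlik--Solomon basis theorem for this step.

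You can close the gap with tools already in your proposal.
\begin{itemize}
\item Since $\delta$ is a graded derivation with $\delta^2=0$, it maps $\Ical$ into itself and induces $\partial$ on $\OS$. For any atom $a$ one has $\partial(e_a x)+e_a\,\partial(x)=x$. Hence $(\OS(M|F),\partial)$ is exact for every flat $F\neq\hat{0}$; this is the exact sequence the paper quotes from Orlik--Solomon (2.18).
\item By your Step 2, $\OS(M|F)=\bigoplus_{G\le F}\OS_G$. The $G$-components of $\OS(M|F)$ and $\OS(M)$ coincide, because the homogeneous spanning elements $e_T\delta(e_C)$ of $\Ical$ of degree $G$ involve only atoms in $G$.
\item Each $\OS_G$ is concentrated in degree $\rank(G)$, because $e_C=e_c\,\delta(e_C)\in\Ical$ kills dependent monomials.
\item Taking Euler characteristics, $\sum_{G\le F}(-1)^{\rank(G)}\dim\OS_G$ equals $1$ for $F=\hat{0}$ and $0$ otherwise. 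M\"obius inversion then gives $\dim\OS_F=(-1)^{\rank(F)}\mu(\hat{0},F)$ directly.
\end{itemize}
Together with spanning and your Step 3 count, this forces the NBC monomials to be independent, which completes part (1) as well.

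One smaller point: in Step 3, the claim that the min-rule is stable under toggling is true but needs proof. Removing $c$ is harmless. Now suppose adding $c$ produced a new broken circuit $C'\setminus\{c'\}$ with $c'=\min C'$. Then $c\in C'$ and $c'<c$. Let $C$ be the circuit with $\min C=c$ and $C\setminus\{c\}\subseteq X$. Strong circuit elimination of $c$ from $C\cup C'$, keeping $c'$, gives a circuit inside $X\cup\{c'\}$ with minimum $c'$. This contradicts the minimality of $c$. Alternatively, the NBC sets contained in a nonempty flat $G$ form a cone with apex $\min G$, and M\"obius inversion yields the count without any involution.
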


\begin{proof}
The decomposition is exactly~\cite[(2.10)]{OrlikSolomon}: NBC sets (those containing no broken circuit) are partitioned by closure, yielding $\OS(M)=\bigoplus_F \OS_F$. For (2), the order complex of the interval $(\hat{0},F)$ is shellable and hence homotopy equivalent to a wedge of spheres of dimension $\rank(F)-2$, say $\bigvee_{i=1}^{t_F} S^{\rank(F)-2}$, with $t_F = |\mu(\hat{0},F)|$ by standard poset-topology results~\cite{Bjorner,StanleyCM}. Therefore
\begin{equation*}
h_{\rank(F)-2}((\hat{0},F)) = |\mu(\hat{0},F)| = (-1)^{\rank(F)} \mu(\hat{0},F),
\end{equation*}
for this topology, since $\mu(\hat{0},F)$ has sign $(-1)^{\rank(F)}$ in a geometric lattice. The NBC sets supported in $F$ index a basis of the homology spaces  $\widetilde{H}_{\rank(F)-2}(\emptyset,F)$ over some suitable field, hence
\begin{equation*}
\dim_K \OS_F = \dim \widetilde{H}_{\rank(F)-2}((\hat{0},F)) = (-1)^{\rank(F)} \mu(\hat{0},F).
\end{equation*}
For more details about these homology spaces, see Subsection \ref{homol}.
\end{proof}
\begin{corollary}[Graded OS dimensions and Möbius numbers]\label{cor:OS-graded-Mobius}
For each $\ell$,
\begin{equation*}
\OS^\ell(M) = \bigoplus_{\rank(F)=\ell} \OS_F,
\quad
\dim \OS^\ell(M) = \sum_{\rank(F)=\ell} (-1)^{\ell} \mu(\hat{0},F).
\end{equation*}
\end{corollary}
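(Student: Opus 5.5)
The plan is to deduce the corollary directly from Proposition~\ref{prop:OS-decomp}, combined with the fact that the Orlik--Solomon algebra is graded by degree of monomials. First I will recall that $\Ical$ is a homogeneous ideal: each generator $\delta(e_S)$ for a circuit $S$ is homogeneous of degree $|S|-1$. Hence $\OS(M)=\bigoplus_\ell \OS^\ell(M)$, where $\OS^\ell(M)$ is the image of the degree-$\ell$ part of $\Ecal$. I will use the standard NBC basis theorem underlying part (1) of Proposition~\ref{prop:OS-decomp}. The images of the NBC monomials $e_S$ form a $K$-basis of $\OS(M)$, and each $\OS_F$ is spanned by those NBC monomials with $\operatorname{cl}(S)=F$.

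The key step is to show that every NBC monomial with closure $F$ has degree exactly $\rank(F)$. An NBC set contains no broken circuit, hence in particular contains no circuit, since a circuit contains its own broken circuit. So an NBC set $S$ is independent (as a set of atoms in the simplification), and therefore $|S|=\rank(\operatorname{cl}(S))=\rank(F)$. It follows that $\OS_F\subseteq \OS^{\rank(F)}(M)$.

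Grouping the direct sum decomposition of part (1) by rank then gives $\bigoplus_{\rank(F)=\ell}\OS_F\subseteq\OS^\ell(M)$. Equality holds because the NBC basis is partitioned both by closure and by degree: any element of degree $\ell$ is a combination of basis elements of degree $\ell$, which lie in the $\OS_F$ with $\rank(F)=\ell$. Taking dimensions and applying part (2) of Proposition~\ref{prop:OS-decomp} yields $\dim\OS^\ell(M)=\sum_{\rank(F)=\ell}(-1)^{\ell}\mu(\hat 0,F)$.

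The main obstacle is the compatibility of the decomposition in part (1) with the grading. Proposition~\ref{prop:OS-decomp} is stated only as a vector space decomposition, so I must make sure that the $\OS_F$ are genuinely homogeneous pieces, not merely spans of images that could mix degrees. I expect the independence argument above to settle this, since it places every basis element of $\OS_F$ in a single degree. I would cite \cite{OrlikSolomon} for the fact that the NBC images are linearly independent in each degree.
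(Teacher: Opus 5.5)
Your proposal is correct and follows the paper's route. The paper gives no separate proof: it treats the corollary as immediate from Proposition~\ref{prop:OS-decomp}, by grouping the flat-wise decomposition by rank and then applying part (2). Your observation that NBC sets contain no circuit, hence are independent, so that every NBC monomial with closure $F$ has degree exactly $\rank(F)$, supplies the compatibility with the grading that the paper leaves unstated.
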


\begin{theorem}[OS realization of single-flat Möbius data]\label{thm:OS-mobius}
Let $M$ be a loop-free  matroid with lattice of flats $L(M)$ and Orlik--Solomon algebra $\OS(M)$. Then for every flat $F \in L(M)$,
\begin{equation*}
\dim \OS_F
=
(-1)^{\rank(F)} \mu(\hat{0},F).
\end{equation*}
In particular, the graded dimensions of $\OS(M)$ determine the characteristic polynomial:
\begin{equation*}
p(M,Z)
=
\sum_{k=0}^{r(M)} (-1)^k \dim \OS^k(M)\, Z^{r(M)-k}.
\end{equation*}
\end{theorem}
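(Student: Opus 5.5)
The first assertion, $\dim \OS_F=(-1)^{\rank(F)}\mu(\hat{0},F)$ for every flat $F$, is exactly part (2) of Proposition~\ref{prop:OS-decomp}, so I will simply cite it. The hypothesis that $M$ is loop-free is what that proposition needs: $\hat{0}=\emptyset$ is then a flat, and $\OS(M)$ is built on the atoms of $L(M)$.

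For the formula for $p(M,Z)$, I start from the lattice-of-flats form of the characteristic polynomial in Definition~\ref{char}. Since $M$ is loopless, $p(M,Z)=p(L,Z)$, so
\begin{equation*}
p(M,Z)=\sum_{F\in L(M)}\mu(\hat{0},F)\,Z^{r(M)-\rank(F)}.
\end{equation*}
Next I group the flats by rank $k=\rank(F)$, for $0\le k\le r(M)$, which gives
\begin{equation*}
p(M,Z)=\sum_{k=0}^{r(M)}\Bigl(\sum_{\rank(F)=k}\mu(\hat{0},F)\Bigr)Z^{r(M)-k}.
\end{equation*}
For each flat of rank $k$ I then substitute $\mu(\hat{0},F)=(-1)^k\dim\OS_F$, using $(-1)^{2k}=1$. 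By Corollary~\ref{cor:OS-graded-Mobius}, $\OS^k(M)=\bigoplus_{\rank(F)=k}\OS_F$, so the inner sum equals $(-1)^k\dim\OS^k(M)$. Putting this back into the grouped sum gives the stated identity.

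There is no real obstacle here; the argument is bookkeeping once Proposition~\ref{prop:OS-decomp} and its corollary are available. The only step needing care is the grading. The graded piece $\OS^k(M)$ consists of the NBC monomials of degree $k$, and the corollary identifies it with the sum of $\OS_F$ over flats of rank $k$. This uses the fact that an NBC set is independent, so its closure has rank equal to its cardinality. I would state this fact explicitly so that the degree-$k$ and rank-$k$ indexings agree.
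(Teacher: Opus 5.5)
Your proposal is correct and follows the paper's proof essentially verbatim. You cite Proposition~\ref{prop:OS-decomp}(2) for the single-flat identity, then expand $p(M,Z)$ over the lattice of flats, group by rank, and apply Corollary~\ref{cor:OS-graded-Mobius}. Your extra remark that NBC sets are independent, so that $|S|=\rank(\operatorname{cl}(S))$ and the degree-$k$ and rank-$k$ indexings agree, is a detail the paper leaves implicit and is worth stating.
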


\begin{proof}
The first statement is Proposition~\ref{prop:OS-decomp}(2). For the characteristic polynomial,
\begin{equation*}
p(M,Z) = \sum_{F \in L(M)} \mu(\hat{0},F) Z^{r(M)-\rank(F)}.
\end{equation*}
Grouping by rank $k=\rank(F)$ and using Corollary~\ref{cor:OS-graded-Mobius} yields
\begin{equation*}
p(M,Z)
=
\sum_{k=0}^{r(M)} \left(\sum_{\rank(F)=k} \mu(\hat{0},F)\right) Z^{r(M)-k}
=
\sum_{k=0}^{r(M)} (-1)^k \dim \OS^k(M)\, Z^{r(M)-k}.
\end{equation*}
\end{proof}

\begin{corollary}[OS control of the top generalized weight polynomial]\label{cor:OS-top-weight}
Let $C$ be a non-degenerate linear code with associated (and thus loop-free) matroid $M$. Then the top generalized weight polynomial satisfies
\begin{equation*}
P_{n}(Z)
=
p(M,Z)
=
Z^{r(M)} \sum_{k=0}^{r(M)} (-1)^k \dim \OS^k(M)\, Z^{-k}.
\end{equation*}
\end{corollary}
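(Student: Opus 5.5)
The plan is to chain three earlier results. The first step is to pass from the code to the matroid. Since $C$ is non-degenerate, no coordinate is identically zero on $C$. Hence no column of a generator matrix is the zero vector, so no singleton has rank $0$, and $M$ is loop-free.

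The second step is to identify $P_n(Z)$ with the characteristic polynomial. By Remark~\ref{specialcase}, taking $j=n$ in Definition~\ref{weightj} gives
$P_n(Z)=\sum_{\gamma\subseteq E}(-1)^{|\gamma|}Z^{r(E)-r(\gamma)}$,
which is $p(M,Z)$ in its subset-expansion form from Definition~\ref{char}. Here I will spell out the reindexing $\gamma\mapsto E\setminus\gamma$. It uses $n^*(\gamma)=r(E)-r(E\setminus\gamma)$, which comes from Definition~\ref{dual}, and it produces the sign factor $(-1)^n$ that cancels the prefactor $(-1)^n$. Alternatively, Proposition~\ref{gencontract} with $j=n$ gives the same identity directly: the only flat of cardinality $0$ is $F=\emptyset$, which is a flat precisely because $M$ is loop-free, and $M/\emptyset=M$.

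The third step is to invoke Theorem~\ref{thm:OS-mobius}, which applies because $M$ is loop-free. It gives
$p(M,Z)=\sum_{k=0}^{r(M)}(-1)^k\dim\OS^k(M)\,Z^{r(M)-k}$.
Factoring out $Z^{r(M)}$, as a formal identity in $\mathbb{Z}[Z,Z^{-1}]$, then yields the stated expression.

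The only point needing care is loop-freeness. It is needed in two places. First, $\emptyset$ must be a flat, so that the flat sum reduces to the single term $M$. Second, Theorem~\ref{thm:OS-mobius} relies on the equality $p(M,Z)=p(L,Z)$ from Definition~\ref{char}, which holds only for loopless $M$; for a matroid with a loop the characteristic polynomial vanishes. Everything else is bookkeeping of signs, so I expect no real obstacle.
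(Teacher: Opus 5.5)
Your proposal is correct and follows the paper's proof: identify $P_n(Z)$ with $p(M,Z)$, apply Theorem~\ref{thm:OS-mobius} (valid because non-degeneracy makes $M$ loop-free), and factor out $Z^{r(M)}$. The only difference is that you derive $P_n(Z)=p(M,Z)$ inside the paper, via the reindexing $\gamma\mapsto E\setminus\gamma$ of Remark~\ref{specialcase} or via Proposition~\ref{gencontract} with $F=\emptyset$, whereas the paper just cites this identity as standard.
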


\begin{proof}
The equality $P_{n}(Z)=p(M,Z)$ is standard in the matroid representation of codes; see e.g.~\cite{JohnsenRoksvoldVerdure,Greene1976}. The OS-expression for $p(M,Z)$ is given by Theorem~\ref{thm:OS-mobius}. Factoring out $Z^{r(M)}$ yields the claimed form.
\end{proof}

\begin{remark}
Furthermore we have (\cite[(2.18)]{OrlikSolomon}):
There is an exact sequence 
\begin{equation} \label{exact}
0 \rightarrow OS^{r(M)} \rightarrow OS^{r(M)-1}\rightarrow  \cdots \rightarrow OS^1 \rightarrow OS^0 \rightarrow 0.
\end{equation}
As an immediate consequence we also obtain:

$\Sigma_{l=0}^{r(M)}(-1)^l \dim OS^l=0, $ or put in another way, using Corollary \ref{cor:OS-top-weight}: 
$P_{n}(1)=0, $ so this gives another proof of Proposition \ref{factor}, i.e. that $Z-1$ is a factor in $P_{n}(Z),$ and  then of all $P_{j}(Z)$, by construction.
\end{remark}

\begin{remark} \label{practical}
Introducing the Orlik-Solomon algebra in order to understand generalized weight polynomials may seem like 
superfluous theoretization with no practical purpose. But a main  point is, as we will see in Subsection \ref{Using the OS-algebra, method with NBC-sets}, that since each $\OS^k(M)$  has a basis of NBC sets of cardinality $k$, as stated above, determining the coefficients of the characteristic polynomial amounts to counting NBC sets of each cardinality. And this can be a very practical tool, quite different from other methods.
\end{remark}

\subsection{Poincaré polynomials and characteristic polynomials}

\begin{definition}
For a finite-dimensional graded algebra
\begin{equation*}
S=\bigoplus_{i=0}^m S_i,
\end{equation*}
its \emph{Poincaré polynomial} is
\begin{equation*}
P_S(Z)=\sum_{i=0}^m (\dim S_i)\, Z^i.
\end{equation*}
For the Orlik--Solomon algebra $\OS(M)$, we obtain
\begin{equation*}
P_{\OS}(Z)
 = \sum_{\ell=0}^{r(M)} (\dim \OS^\ell(M))\, Z^\ell
 = \sum_{F\in L(M)} (-1)^{\rank(F)}\mu(0,F)\, Z^{\rank(F)},
\end{equation*}
using Proposition~\ref{prop:OS-decomp}.
\end{definition}

\begin{lemma}\label{lem:pL-PA}
For the lattice $L(M)$ of flats of a loop-free matroid $M$, we have
\begin{equation*}
p(L(M),Z) = Z^{r(M)}\, P_{\OS}\!\left(-\frac{1}{Z}\right).
\end{equation*}
\end{lemma}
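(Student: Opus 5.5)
The plan is a direct substitution into the Poincaré polynomial, using the flat-wise description of $P_{\OS}$ from the definition just above. By Proposition~\ref{prop:OS-decomp}, we have
\begin{equation*}
P_{\OS}(Z)=\sum_{F\in L(M)} (-1)^{\rank(F)}\mu(\hat 0,F)\,Z^{\rank(F)} .
\end{equation*}
Since $M$ is loop-free, $\hat 0=\emptyset$, and Definition~\ref{char} gives
\begin{equation*}
p(L(M),Z)=\sum_{F\in L(M)}\mu(\hat 0,F)\,Z^{r(M)-\rank(F)} .
\end{equation*}
So I would simply evaluate $P_{\OS}$ at $-1/Z$ and multiply by $Z^{r(M)}$.

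Concretely, for each flat $F$ the term $(-1)^{\rank(F)}\mu(\hat 0,F)\,(-1/Z)^{\rank(F)}$ becomes $(-1)^{2\rank(F)}\mu(\hat 0,F)\,Z^{-\rank(F)}=\mu(\hat 0,F)\,Z^{-\rank(F)}$. Multiplying by $Z^{r(M)}$ then gives $\mu(\hat 0,F)\,Z^{r(M)-\rank(F)}$. Summing over $F\in L(M)$ reproduces $p(L(M),Z)$ term by term. The identity holds as an equality of Laurent polynomials, and in fact of polynomials, since $\rank(F)\le r(M)$.

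Alternatively, I could work rank by rank. Group the flats by rank $k$ and use Corollary~\ref{cor:OS-graded-Mobius}, which gives $\dim\OS^k(M)=(-1)^k\sum_{\rank(F)=k}\mu(\hat 0,F)$. This yields $Z^{r(M)}P_{\OS}(-1/Z)=\sum_k(-1)^k\dim\OS^k(M)\,Z^{r(M)-k}$, which is exactly the right-hand side in Theorem~\ref{thm:OS-mobius}. That theorem already identifies this expression with $p(M,Z)=p(L(M),Z)$ for loop-free $M$.

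There is essentially no obstacle. The only point needing care is the sign bookkeeping $(-1)^{k}\cdot(-1)^{k}=1$, together with the use of loop-freeness so that $\hat 0=\emptyset$ and Proposition~\ref{prop:OS-decomp} applies.
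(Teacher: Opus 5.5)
Your proposal is correct and is essentially the paper's proof. The paper writes $P_{\OS}(t)=\sum_k d_k t^k$, substitutes $t=-1/Z$, and uses Proposition~\ref{prop:OS-decomp} to replace $d_k$ by $(-1)^k\sum_{\rank(F)=k}\mu(\hat 0,F)$; this is exactly your rank-by-rank variant, and your flat-by-flat version is the same computation without the grouping, resting on the same sign cancellation $(-1)^k(-1)^k=1$.
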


\begin{proof}
Write $P_{\OS}(t)=\sum_{k=0}^{r(M)} d_k t^k$. Then
\begin{equation*}
Z^{r(M)} P_{\OS}\!\left(-\frac{1}{Z}\right)
= \sum_{k=0}^{r(M)} (-1)^k d_k Z^{r(M)-k}.
\end{equation*}
By Proposition~\ref{prop:OS-decomp}, $d_k = \sum_{\rank(F)=k} (-1)^k \mu(0,F)$, so
\begin{equation*}
Z^{r(M)} P_{\OS}\!\left(-\frac{1}{Z}\right)
=
\sum_{k=0}^{r(M)} \left(\sum_{\rank(F)=k} \mu(0,F)\right) Z^{r(M)-k}
= \sum_{F \in L(M)} \mu(0,F)\, Z^{r(M)-\rank(F)},
\end{equation*}
which is $p(L(M),Z)$.
\end{proof}

\begin{corollary}\label{cor:top-P-A}
For a matroid $M$ associated to a non-degenerate linear code $C$,
\begin{equation*}
P_{n}(Z) = A_{C,n}(Z) = Z^{r(M)} P_\OS\!\Big(-\frac{1}{Z}\Big).
\end{equation*}
\end{corollary}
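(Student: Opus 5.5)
The plan is to chain the identifications already established, with no new combinatorics. Proposition~\ref{gencontract} applied with $j=n$ leaves only the flat $F$ with $|F|=0$, namely $F=\emptyset$. This requires $\emptyset$ to be a flat, i.e.\ $M$ loop-free. Then
\begin{equation*}
P_n(Z)=p(M/\emptyset,Z)=p(M,Z).
\end{equation*}
Alternatively, one can read $P_n(Z)=p(M,Z)$ directly from Remark~\ref{specialcase}, which requires no hypothesis. The equality $A_{C,n}(Z)=P_n(Z)$ is simply the definition of $A_{C,j}(Z)$, combined with Lemma~\ref{connection} for the interpretation.

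Next, I justify that $M$ is loop-free. Non-degeneracy of $C$ means that no coordinate vanishes identically on $C$. Equivalently, no column of a generator matrix is zero, so no element $e\in E$ has $r(\{e\})=0$. Hence $\emptyset$ is a flat, and Definition~\ref{char} gives $p(M,Z)=p(L(M),Z)$ for the lattice of flats $L(M)$.

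Finally, I apply Lemma~\ref{lem:pL-PA}, which for loop-free $M$ gives
\begin{equation*}
p(L(M),Z)=Z^{r(M)}P_{\OS}(-1/Z).
\end{equation*}
Chaining the three equalities yields the corollary. As a consistency check, the result agrees with Corollary~\ref{cor:OS-top-weight} after rewriting $\sum_k(-1)^k\dim\OS^k(M)Z^{-k}$ as $P_{\OS}(-1/Z)$.

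The only point needing care is the loop-free hypothesis, which is needed so that the lattice form of the characteristic polynomial and the OS decomposition of Proposition~\ref{prop:OS-decomp} apply. I do not expect any genuine obstacle beyond making the translation from non-degeneracy to loop-freeness explicit.
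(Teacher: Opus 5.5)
Your proposal is correct and follows essentially the same route as the paper: identify $P_n(Z)=p(M,Z)$, use loop-freeness to pass to $p(L(M),Z)$, and then apply Lemma~\ref{lem:pL-PA}, with $A_{C,n}(Z)=P_n(Z)$ holding by definition. The only difference is that you spell out why non-degeneracy of $C$ makes $M$ loop-free, a step the paper leaves implicit.
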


\begin{proof}
We have $P_{n}(Z)=p(M,Z)$, and $p(M,Z)=p(L(M),Z)$. Lemma~\ref{lem:pL-PA} gives the desired expression. The equality with $A_{C,n}(Z)$ follows from the matroid representation of $C$.
\end{proof}

\subsection{Weight polynomials via Orlik--Solomon}

\begin{proposition}[Weight polynomials via Orlik--Solomon]\label{prop:OS-weight-general}
Let $M$ be a matroid of rank $r(M)$ on $E$ and $M^*$ its dual. For each $j=0,\dots,n$,
\begin{equation*}
P_j(Z) =\sum_{\substack{F \subseteq E \\ |F|=n-j}}  \sum_{k=0}^{\rank(M/F)} (-1)^k \dim \OS^k(M/F)\, Z^{\rank(M/F)-k}.
\end{equation*}
\end{proposition}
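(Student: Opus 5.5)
The plan is to combine the flat decomposition of Proposition~\ref{gencontract} with the Orlik--Solomon expression for characteristic polynomials in Theorem~\ref{thm:OS-mobius}, applied term by term to the contractions $M/F$. By Proposition~\ref{gencontract},
\begin{equation*}
P_j(Z)=\sum_{F \text{ flat},\ |F|=n-j} p(M/F,Z).
\end{equation*}
For each flat $F$, Proposition~\ref{contractedflats} shows that $M/F$ is loop-free. Hence Theorem~\ref{thm:OS-mobius} applies to $M/F$ and gives
\begin{equation*}
p(M/F,Z)=\sum_{k=0}^{\rank(M/F)}(-1)^k\dim \OS^k(M/F)\,Z^{\rank(M/F)-k}.
\end{equation*}
Substituting this expression flat by flat already yields the formula, summed over flats of cardinality $n-j$.

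The remaining point is that the statement sums over all subsets $F\subseteq E$ with $|F|=n-j$, not only over flats. So I must show that every non-flat $F$ contributes zero.

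If $F$ is not a flat, take $e\in \operatorname{cl}(F)\setminus F$. Then $r(F\cup\{e\})=r(F)$, so $e$ is a loop of $M/F$. This has two consequences.

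\begin{itemize}
\item By Definition~\ref{char}, $p(M/F,Z)=0$, so the left-hand side receives no contribution from $F$.
\item On the Orlik--Solomon side, I will show that the inner sum is also $0$. The singleton $\{e\}$ is a circuit, and $\delta(e_e)=1$ lies in $\Ical$. Therefore $\Ical=\Ecal$ and $\OS(M/F)=0$.
\end{itemize}

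This reading of the Orlik--Solomon algebra of a matroid with loops (a loop contributes a generator $e_e$ and the circuit $\{e\}$) follows directly from the definitions. It agrees with the vanishing of $p(M/F,Z)$ in Definition~\ref{char}. If the paper's intended convention is instead that the atom-based construction ignores loops, I would fall back on reading the sum as running over flats, which is what the proof via Proposition~\ref{gencontract} naturally produces.

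I expect this convention for non-flat $F$ to be the main obstacle. The definition of $\OS$ via atoms is only set up for loop-free matroids, and Theorem~\ref{thm:OS-mobius} is stated only in that case, so the non-flat terms need an explicit justification of the kind above. Everything else is a direct substitution, with ranks in $M/F$ computed as $\rank(M/F)=r(M)-r(F)$.
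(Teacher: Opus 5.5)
Your argument is correct and is the paper's: substitute Theorem~\ref{thm:OS-mobius}, applied to the loop-free contractions $M/F$, into the flat decomposition of Proposition~\ref{gencontract}. Your discussion of non-flat $F$ is a sound extra clarification that the paper skips: its proof simply reads the sum as running over flats (``for each such flat $F$''), and that reading is actually necessary under its atom-based definition of $\OS$, because a loop of $M/F$ is not an atom and such a term would then contribute the nonzero polynomial $p(L(M/F),Z)$ rather than $0$.
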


\begin{proof}
We have:
\begin{equation*}
P_{j}(Z) = \sum_{F:\,|F|=n-j} p(M/F,Z),
\end{equation*}
where $p(M/F,Z)$ is the characteristic polynomial of the contraction $M/F$.

For each such flat $F$, $M/F$ is a matroid of rank $\rank(M/F) = \rank(M)-\rank(F)$. The characteristic polynomial of $M/F$ can be expressed in terms of the Orlik--Solomon algebra $\OS(M/F)$ by Theorem~\ref{thm:OS-mobius}:
\begin{equation*}
p(M/F,Z) = \sum_{k=0}^{\rank(M/F)} (-1)^k \dim \OS^k(M/F)\, Z^{\rank(M/F)-k}.
\end{equation*}
Then we are left with the stated formula
\begin{equation*}
P_j(Z) =\sum_{\substack{F \subseteq E \\ |F|=n-j}}  \sum_{k=0}^{\rank(M/F)} (-1)^k \dim \OS^k(M/F)\, Z^{\rank(M/F)-k}.
\end{equation*}
\end{proof}

\subsection{Deletion--contraction sequences for OS algebras}
Given a geometric lattice $L$ and an atom $e$. The contraction $L/ \{e\}$ is the interval $[\{e\},\hat{1}].$
The deletion $L \setminus \{e\}$ is the lattice formed by combining all atoms except $\{e\}.$
An atom is an isthmus if its removal drops the rank of the lattice. This corresponds to a coloop of the associated simple matroid.

For completeness we recall a well-known result and its proof:
\begin{theorem}[Deletion--Contraction Exact Sequence {\cite{OST1984}}]
Let $L=L(M)$ be a geometric lattice and let $e$ be an atom which is not an isthmus . 
Then there exists a short exact sequence of Orlik--Solomon algebras:
\begin{equation*}
0 \longrightarrow \OS(L-e) \xrightarrow{i} \OS(L) \xrightarrow{j} \OS(L/e) \longrightarrow 0.
\end{equation*}
\end{theorem}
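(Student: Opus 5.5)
I would work with the standard presentation $\OS(L)=\Ecal(L)/\Ical(L)$ on generators $e_a$, $a\in A(M)$, fix a linear order on the atoms with $e$ the \emph{largest}, and use the NBC basis of Proposition~\ref{prop:OS-decomp}. The plan has three steps: construct the two maps, prove exactness at the ends and composition zero algebraically, and obtain exactness in the middle by a dimension count.

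\emph{The map $i$.} The atoms of $L-e$ are $A(M)\setminus\{e\}$. Every circuit of $L-e$ is a circuit of $L$ avoiding $e$, since deletion does not change the rank of subsets avoiding $e$. Hence the inclusion $\Ecal(L-e)\hookrightarrow\Ecal(L)$ maps $\Ical(L-e)$ into $\Ical(L)$ and induces an algebra map $i$.

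\emph{The map $j$.} This is a map of graded vector spaces of degree $-1$, so $j:\OS^k(L)\to\OS^{k-1}(L/e)$. On the exterior algebra I set $j(e_S)=0$ if $e\notin S$. If $e\in S$, I write $e_S=\pm e_{S'}e$ with $e$ moved to the end, and send it to $\pm\bar e_{S'}$. Here $\bar a$ denotes the atom $a\vee e$ of $L/e=[e,\hat 1]$, and the image is zero if two elements of $S'$ have the same join with $e$.

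I then check that $\Ical(L)$ goes into $\Ical(L/e)$, treating a circuit $C$ in two cases.
\begin{itemize}
\item If $e\notin C$, the formula $\delta(e_C)e=\pm\delta(e_Ce)$ (up to the term $e_C$, which lies in the ideal when $C$ is dependent) shows that $\delta(e_C)$ maps into the ideal generated by the images of dependent sets in $L/e$.
\item If $e\in C$, then $C\setminus e$ becomes dependent after contracting $e$, and $\delta(e_C)$ maps to $\pm\delta(\bar e_{C\setminus e})$ plus products landing in the ideal.
\end{itemize}
This is a routine but sign-heavy verification. Surjectivity of $j$ is clear, because $\bar e_{S'}$ is the image of $e_{S'}e$. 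The composition $j\circ i=0$ holds because elements of $\OS(L-e)$ avoid $e$.

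\emph{Injectivity of $i$ and exactness in the middle.} Since $e$ is the largest atom, no broken circuit of $L-e$ involves $e$. The NBC sets of $L-e$ are therefore exactly the NBC sets of $L$ not containing $e$, and these are linearly independent in $\OS(L)$; this gives injectivity of $i$. For the middle, it suffices to show $\dim\OS(L)=\dim\OS(L-e)+\dim\OS(L/e)$, degree by degree with the shift. The NBC sets of $L$ containing $e$ are the sets $S'\cup\{e\}$, and I expect $S'\mapsto\{\bar a: a\in S'\}$ to be a bijection onto the NBC sets of $L/e$, for a suitably induced order on the atoms of $L/e$.

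Alternatively, this dimension identity follows from the deletion--contraction recursion $p(M,Z)=p(M\setminus e,Z)-p(M/e,Z)$ for the characteristic polynomial, valid when $e$ is not an isthmus. Combined with Theorem~\ref{thm:OS-mobius}, which reads off graded dimensions from coefficients of $p$, this gives
\[\dim\OS^k(L)=\dim\OS^k(L-e)+\dim\OS^{k-1}(L/e).\]
Here the sign $(-1)^k$ is absorbed, and the non-isthmus hypothesis is exactly what keeps the ranks of $M\setminus e$ and $M$ equal so that exponents align.

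Since $i$ is injective, $j$ is surjective, $j\circ i=0$, and the dimensions add up, the sequence is exact. The main obstacle I expect is the well-definedness of $j$ on the ideal, with the sign bookkeeping when $e\in C$. I would handle it by using that $\Ical$ is generated by the $e_S$ for dependent $S$ together with the $\delta(e_C)$, which is the standard equivalent description of the Orlik--Solomon ideal.
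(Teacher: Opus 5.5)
Your proof is correct, and it departs from the paper's proof at the decisive steps.

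\textbf{What the paper does.} It orders the atoms with $e$ \emph{minimal}. It asserts without proof that $j$ is a well-defined graded algebra homomorphism. It then reads off $\ker j$ directly from the NBC basis: each NBC monomial containing $e$ has nonzero image, so the kernel is declared to be spanned by the NBC monomials avoiding $e$.

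\textbf{What you do instead.} You put $e$ \emph{last} and treat $j$, correctly, as a degree $-1$ linear map. You check by hand that it kills $\Ical(L)$. You then get $\operatorname{im} i=\ker j$ from four facts: $i$ is injective, $j$ is surjective, $j\circ i=0$, and the count
\[
\dim\OS^k(L)=\dim\OS^k(L-e)+\dim\OS^{k-1}(L/e),
\]
which comes from $p(M,Z)=p(M\setminus e,Z)-p(M/e,Z)$ together with Theorem~\ref{thm:OS-mobius}.

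\textbf{Why your choices matter.} With $e$ minimal, both basis arguments fail. Take $U_{2,3}$ with atoms $a_1<a_2<a_3$ and $e=a_1$. The set $\{a_2,a_3\}$ is NBC in $L-e$ but is a broken circuit of $L$. Also $j(e_{a_1}e_{a_2})=\pm j(e_{a_1}e_{a_3})$, because $a_2\vee a_1=a_3\vee a_1$. So nonvanishing of the individual images does not give their independence.

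\textbf{What your route costs.} It uses the Poincar\'e-polynomial recursion as an input rather than deriving it from the sequence. It also produces no basis adapted to the sequence. Such a basis comes from the bijection $S'\cup\{e\}\mapsto\{a\vee e: a\in S'\}$ onto the NBC sets of $L/e$, where each atom of $L/e$ is ordered by the least $a$ with that join. This bijection does hold with $e$ last, but your dimension count makes it unnecessary.

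\textbf{Two details to state correctly.}
\begin{enumerate}
\item Injectivity of $i$ does not rest on the tautology that broken circuits of $L-e$ avoid $e$. It rests on the fact that, $e$ being last, every circuit through $e$ has a minimum other than $e$. Hence its broken circuit contains $e$, and the broken circuits of $L$ avoiding $e$ are exactly those of $L-e$.
\item Since $j$ is not multiplicative, you cannot just check generators of the ideal. Reducing the ideal check to the elements $\delta(e_C)$ and $\delta(e_C)e$ needs the twisted linearity
\[
j(xy)=(-1)^{|y|}j(x)\,\bar y
\]
for homogeneous $y\in\Ecal(L-e)$, where $\bar y$ is the image of $y$ under $e_a\mapsto e_{a\vee e}$. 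This holds because $\Ical(L)$ is spanned by the products $\delta(e_C)\,m$ with $m$ a monomial.
\end{enumerate}
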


\begin{proof}
Fix a linear order on the set of atoms $A(L)$ such that $e$ is minimal. Let $E(L)$ be the exterior algebra on $A(L)$, and $\OS(L)=E(L)/I(L)$ be the Orlik--Solomon algebra. Similarly define $\OS(L-e)$ and $\OS(L/e)$.

Define
\begin{equation*}
i:\OS(L-e)\rightarrow \OS(L), \quad e_S \mapsto e_S.
\end{equation*}
Define $j$ on generators by
\begin{equation*}
j(e_S)=
\begin{cases} 
e_{\lambda(S\setminus\{e\})}, & e\in S, \\
0, & e\notin S,
\end{cases}
\end{equation*}
where $\lambda(H)=e\vee H$ in $L$. This induces a well-defined graded algebra homomorphism $j:\OS(L)\rightarrow \OS(L/e)$.

By Proposition \ref{prop:OS-decomp} 
$\OS(L-e)$ has a basis indexed by NBC-sets not containing $e$, which embed into the corresponding basis of $\OS(L)$; hence $i$ is injective.

Let $e_S$ be a basis element of $\OS(L)$. If $e\notin S$, then $j(e_S)=0$. If $e\in S$, then $j(e_S)\neq 0$. Hence
\begin{equation*}
\ker(j)=\mathrm{span}\{e_S \mid e\notin S\} = \operatorname{im}(i).
\end{equation*}
Surjectivity of $j$ follows since every basis element of $\OS(L/e)$ is represented by some $e_{\lambda(S')}$ with $S'\subseteq A(L-e)$, and then $e_{S'\cup\{e\}}$ maps to it. The sequence is exact.
\end{proof}
\begin{example}
Let the uniform matroid $M=U_{2,4}$
The deletion--contraction theorem gives the short exact sequence

\begin{equation*}
0 \longrightarrow \OS(U_{r,n-1})
  \longrightarrow \OS(U_{r,n})
  \longrightarrow \OS(U_{r-1,n-1})
  \longrightarrow 0,
\end{equation*}

and in particular,

\begin{equation*}
0
\longrightarrow
\OS(U_{2,3})
\longrightarrow
\OS(U_{2,4})
\longrightarrow
\OS(U_{1,3})
\longrightarrow
0.
\end{equation*}
\end{example}

An analogous result holds at the level of Whitney homology (See Subsection \ref{homol}) for definitions).

 \section{Homology numbers}  \label{homol}
 \subsection{Order homology}

The order, or Whitney, homology of a matroid is typically studied through a simplicial complex $S_L$ derived from its lattice of flats $L$. The vertex set of $S_L$ consists of all flats of $M$ excluding the minimal element ($\hat{0}$) and the maximal element ($\hat{1}$). The simplices are formed by all totally ordered chains of these flats, meaning the facets of $S_L$ correspond to maximal chains of length $r-1$, where $r = \rank(M)$. Consequently, these facets have dimension $r-2$, and for a geometric lattice, the homology is concentrated entirely in this top dimension. Specifically, the top homology dimension $h_{r-2}(S_L)$ is equal to the absolute value of the Möbius function $|\mu(L)|$.

Whitney homology refines this global topological view by collecting the local topology of every principal lower interval in the lattice. It is defined as:
\begin{definition}
Let $L = L(M)$ be the lattice of flats of a loop-free matroid $M$. The Whitney homology is the direct sum
\begin{equation*}
H^{W}(L) = \bigoplus_{F \in L} \widetilde{H}_\bullet(\hat{0},F)
\end{equation*}
where $\widetilde{H}_\bullet(\hat{0}, F)$ denotes the reduced homology of the order complex associated with the open interval $(\hat{0}, F)$.
\end{definition}
This collection of local homology groups is $\mathbb{N}$-graded by rank, such that the $l$-th Whitney homology group is $H^W(L)_l = \bigoplus_{\rank(F)=l} H_{\rank(F)-2}(S_F)$. The dimension of this $l$-th group is the absolute value of the Whitney number of the first kind, $|w_l| = \sum_{\rank(F)=l} |\mu(\hat{0}, F)|$.

A fundamental result in this framework is the Orlik--Solomon--Whitney Isomorphism, which states that the Whitney homology ring is isomorphic as a graded algebra to the Orlik--Solomon algebra $\OS(M)$. This isomorphism provides a bridge between topological invariants and combinatorial structures, as the graded dimensions of the OS algebra (and thus Whitney homology) are indexed by No Broken Circuit (NBC) sets.

\begin{theorem}[Orlik--Solomon--Whitney Isomorphism]\label{thm:OS-Whitney}
Let $M$ be a matroid with lattice of flats $L(M)$. There exists a graded algebra isomorphism
\begin{equation*}
H^{W}(L(M)) \cong \OS(M),
\end{equation*}
where $H^W(L(M))$ is the Whitney homology ring and $\OS(M)$ is the Orlik--Solomon algebra.
\end{theorem}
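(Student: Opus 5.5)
The plan is to build the isomorphism one flat at a time and then check that the pieces multiply correctly. Proposition~\ref{prop:OS-decomp} already gives the vector space decomposition $\OS(M)=\bigoplus_{F\in L}\OS_F$, graded by $\rank(F)$, and the Whitney homology splits as $\bigoplus_F \widetilde H_{\rank(F)-2}((\hat 0,F))$ by definition. So it suffices to produce, for each flat $F$, a natural isomorphism $\phi_F:\OS_F\to\widetilde H_{\rank(F)-2}((\hat 0,F))$, and then to show that $\phi=\bigoplus_F\phi_F$ carries the OS product to the Whitney product. We may assume $M$ is loop-free, passing to the simplification if necessary, since neither side changes.

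For each flat $F$, I would first define $\phi_F$ on independent monomials $e_S=e_{s_1}\cdots e_{s_k}$ with $\operatorname{cl}(S)=F$ and $k=\rank(F)$. Each permutation $\pi$ of $S$ gives a maximal chain
\[
\hat 0<\operatorname{cl}(s_{\pi 1})<\operatorname{cl}(s_{\pi 1},s_{\pi 2})<\cdots<F .
\]
Removing the endpoints leaves a $(k-2)$-simplex of the order complex of $(\hat 0,F)$. I set $\phi_F(e_S)=\sum_\pi \operatorname{sgn}(\pi)\,[\text{chain}_\pi]$. This is a cycle, because codimension-one faces pair off under adjacent transpositions with opposite signs; it is the standard ``Boolean sphere'' attached to a basis of $F$.

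Next I would show that $\phi_F$ kills the relevant part of $\Ical$, namely the elements $\delta(e_C)\cdot e_T$ whose monomials have closure $F$, and that products of dependent sets go to zero. The latter is immediate, since a dependent set produces no strict chain of full length. For the former I would check that the alternating sum over the elements of a circuit gives a boundary, or telescopes to zero, in the chain complex. Bijectivity then follows by counting. Both sides have dimension $|\mu(\hat 0,F)|$: for $\OS_F$ this is Proposition~\ref{prop:OS-decomp}, and for the homology it is the wedge-of-spheres statement for the shellable interval. So it is enough to prove surjectivity, or injectivity on the NBC basis. For injectivity I would use the lexicographic shelling: the images of the NBC monomials are triangular with respect to the decreasing chains indexed by NBC sets, the homology facets of the shelling.

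Finally comes multiplicativity. The Whitney product of classes on $(\hat 0,F)$ and $(\hat 0,G)$ is zero unless $\rank(F\vee G)=\rank F+\rank G$. In that case it is defined by shuffling chains into $(\hat 0,F\vee G)$, and on the Boolean sphere generators the shuffle of the two permutation sums is exactly the permutation sum for the concatenated independent set $S\cup T$, with the correct sign. Since the $e_S$ with $S$ independent span $\OS(M)$, this gives $\phi(ab)=\phi(a)\phi(b)$.

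I expect the main obstacle to be the well-definedness step: showing that circuit relations map to zero in homology, as opposed to merely being compatible on NBC sets. If the direct check is messy, I would fall back on the approach of Björner and Orlik--Solomon. There one shows that both $\OS$ and $H^W$ satisfy the same deletion--contraction short exact sequence, using the theorem above for $\OS$ and its Whitney-homology analogue. An induction on the number of atoms, together with the explicit map $\phi$ and the five lemma, then gives the isomorphism.
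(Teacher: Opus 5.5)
Your plan is correct, and it takes a different route from the paper: it constructs an explicit map where the paper relies on citation. The paper invokes Bj\"orner's Theorem 7.10.2. It then observes that both sides have NBC-indexed bases in each degree, via shellability of the intervals $(\hat 0,F)$, and refers to Orlik--Solomon for compatibility of products. Matching dimensions does not by itself produce a ring map, so the substance there lies in the references. You instead write down the map $e_S\mapsto\rho_S$ (Bj\"orner's fundamental cycles), check the relations and the product on generators, and use counting only to finish. This gives a concrete isomorphism, with some economy. Since NBC monomials span $\OS_F$ (the easy straightening half of the NBC theorem), linear independence of the $\rho_B$ together with $\dim\widetilde H_{\rank(F)-2}((\hat 0,F))=|\mu(\hat 0,F)|$ already gives bijectivity. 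As a by-product you get the independence of NBC monomials in $\OS(M)$. Your triangularity is in fact diagonal. Let $c_B$ be the chain that adds the elements of an NBC basis $B$ in decreasing order. Then $c_B$ occurs in $\rho_{B'}$, with $B'$ NBC, only for $B'=B$. The reason is that at each step the added element is the minimum of the current flat. Any other choice from $B'$ would create a circuit through that minimum, and hence a broken circuit inside $B'$.

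The step you single out as the main obstacle is routine, and it holds already on chains. This is forced, since in top degree there are no boundaries. Only $\delta(e_C)e_T$ with $T\cap C=\emptyset$ and every $B_i=(C\setminus\{c_i\})\cup T$ independent needs checking. Take the chain of an ordering $(s_1,\dots,s_m)$ of $B_i$. The omitted element $c_i$ enters the closure exactly when the last element $s_j$ of $C\setminus\{c_i\}$ is added, because the unique circuit in $B_i\cup\{c_i\}$ is $C$. So exactly one step of the chain picks up two elements of $C\cup T$, namely $s_j$ and $c_i$. The same chain then arises exactly once more, from $B_{i'}$ with $c_{i'}=s_j$ and the roles of $s_j$ and $c_i$ exchanged. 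Append the omitted element at the end of each ordering; the two resulting orderings of $C\cup T$ differ by a transposition. The factor $(-1)^{i-1}$ in $\delta$ exactly compensates for moving $c_i$ to the end, so the two contributions cancel and $\phi(\delta(e_C)e_T)=0$. Your deletion--contraction fallback would not have bypassed this check anyway, since the five-lemma argument needs $\phi$ to be well defined on $\OS$ in the first place.
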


This result is given in \cite[Theorem 7.10.2]{Bjorner}. It also follows from \cite{Bjorner} that the graded bit nr. $k$ for each of the rings is generated by NBC-set of cardinality $k$, for each $k\in [0,r(M)].$ This will be a useful tool to calculate generalized weight polynomials.
The shellability of each interval $(\hat{0},F)$ in the geometric lattice $L(M)$ is classical; see Orlik--Solomon~\cite{OrlikSolomon} and Bj\"orner~\cite{Bjorner}. 
Compatibility of the products (join of flats vs.\ wedge product) is established in~\cite{OrlikSolomon,OST1984}. Matching these structures yields the ring isomorphism.

It is then well known that for the dimensions $h_i(S_L)$ of the (reduced) homology groups $H_i(S_L)$ over $\mathbf{Z}$ we have:

\begin{proposition} \label{homWhit}
$h_{r(M)-2}(S_L)=(-1)^{r(M)}\mu(L)=|\mu(L)|$, and $h_i(S_L)=0,$ for $i \ne r(M)-2$.
\end{proposition}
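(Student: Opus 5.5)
We prove Proposition~\ref{homWhit} by combining two facts about the order complex $S_L$ of the proper part $(\hat{0},\hat{1})$ of $L=L(M)$: the reduced Euler characteristic computes the Möbius function, and shellability forces the homology into a single degree. We assume $r(M)\ge 2$, so that $(\hat 0,\hat 1)$ is nonempty; the degenerate cases $r(M)\le 1$ are checked directly with the convention $\widetilde H_{-1}(\emptyset)=\mathbf{Z}$.

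\emph{Step 1: concentration in one degree.} The lattice $L$ is geometric, hence semimodular. Every maximal chain of $(\hat 0,\hat 1)$ therefore has $r(M)-1$ elements, so $S_L$ is pure of dimension $r(M)-2$. The lattice is moreover EL-shellable, in the classical sense recalled in the proof of Proposition~\ref{prop:OS-decomp} (see \cite{Bjorner}). Label each covering $F\lessdot G$ by the minimal atom (in a fixed order) lying in $G$ but not in $F$. The increasing chains are then unique in every interval and lexicographically first, and this gives a shelling of $S_L$. A pure shellable complex of dimension $d$ is homotopy equivalent to a wedge of $d$-spheres. Hence $h_i(S_L)=0$ for $i\neq r(M)-2$, and $H_{r(M)-2}(S_L)$ is free. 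This is the step carrying the real content. If one prefers not to redo the EL-labelling, one can simply cite the shellability statement used in Proposition~\ref{prop:OS-decomp}, applied to the interval $F=\hat 1$.

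\emph{Step 2: Euler characteristic.} By Philip Hall's theorem, the reduced Euler characteristic of the order complex of an open interval $(x,y)$ equals $\mu(x,y)$. We prove this from Definition~\ref{Mob}. Let $c_k$ be the number of chains $x=z_0<z_1<\dots<z_k=y$. Expanding the recursion gives $\mu(x,y)=\sum_k(-1)^k c_k$. A chain with $k$ steps corresponds to a face of $S_L$ with $k-1$ vertices, so this alternating sum is exactly the reduced Euler characteristic (the empty chain contributes the $-1$). Thus $\widetilde\chi(S_L)=\mu(\hat0,\hat1)=\mu(L)$.

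\emph{Step 3: combine.} By Step 1 only degree $r(M)-2$ contributes, so $\widetilde\chi(S_L)=(-1)^{r(M)-2}h_{r(M)-2}(S_L)$. This gives $h_{r(M)-2}(S_L)=(-1)^{r(M)}\mu(L)$. Since $h_{r(M)-2}\ge 0$, this number equals $|\mu(L)|$, which recovers Rota's sign alternation as a byproduct. This completes the plan.
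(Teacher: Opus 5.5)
Your proof is correct and takes the paper's route. The paper treats the statement as well known, and in the proofs of Proposition~\ref{prop:OS-decomp} and Corollary~\ref{newtool} justifies it by shellability of the order complex (forcing all reduced homology into degree $r(M)-2$) together with the identification of the reduced Euler characteristic with $\mu(\hat 0,\hat 1)$; you make both steps explicit, via the minimal-atom EL-labelling and Hall's chain expansion of the recursion in Definition~\ref{Mob}. One minor nit: the rank-$0$ case rests on the paper's separate convention $h_{-2}=1$, since $\widetilde H_{-1}(\emptyset)=\mathbf{Z}$ only covers $r(M)=1$.
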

Moreover this is true in "microcosmos" also:
$h_{r(F)-2}(S_F)=(-1)^{r(F)}\mu(L_F)=|\mu(F)|$, and $h_i(S_F)=0,$ for $i \ne r(F)-2$,
for each flat $F$, where $S_F$ is the corresponding simplicial complex obtained from the lattices interval $L_F=[\emptyset,F].$
For $F$ the zero flat, one sets $h_{-2}(S_F)=1.$
Set-theoretically, one then has:
\begin{definition}
$H^W(L)=\oplus_{F \in L}H_{r(F)-2}(S_F).$

The local  Whitney number   $w_F$  is $(-1)^{r(F)} h_{r(F)-2}(S_F)=\mu(\emptyset, F).$
One also sets: 
$H^W(L)_l=\oplus_{r(F)=l}H_{r(F)-2}(S_F).$

The Whitney number (of the first kind)  $w_l$  is 

$rank H^W(L)_l=\Sigma_{r(F)=l}h_{r(F)-2}=
\Sigma_{r(F)=l}(-1)^l \mu(\emptyset,F).$ We call this number $h_l.$

\end{definition}
Furthermore one has: $H^W(L)=\oplus_{l=0}^{r(M)}H^W(L)_l$.

It turns out that when $H^W(L)$ is $\mathbf{N}$-graded by $l$ in this way, with a natural sum inherited from its homology group constituents, one can define a product on $H^W(L)$ which makes it anti-commutative graded ring isomorphic to the Orlik-Solomon algebra as graded rings.

For each flat $F$ of a matroid $M$, let $w_{F,k}$ be the Whitney number of the first kind
for the matroid $M/ F$ (or $((M^*)|_{\sigma})^*$, where $\sigma=E \setminus F$ is a cycle of $M^*.$)

The set-theoretic part of the statements above give (when rewriting the definition of the characteristic polynomial of a geometric lattice, and also  Equation (\ref{eq:Mobius-flats}), in a language using (Whitney) homology numbers):

\begin{proposition} \label{Whitney}
The $n$'th generalized weight polynomial satisfies:
  $$P_{n}(Z)=\Sigma_F (-1)^{r(F)} h_{r(F)-2}(S_F)Z^{r(M)-r(F)},\textrm{ and }$$ 

  $$P_{n}(Z)=\Sigma_{l=0}^{r(M)} (-1)^lh_l Z^{r(M)-l},$$
  where $h_l$ is the sum of the $h_{r(F)-2}$-values for the flats of rank $l.$
We then also have the more general formula: 
  \begin{equation}  \label{onlywhitney}
P_{j}(Z)=\Sigma_{|F|=n-j}\Sigma_{l=0}^{r(M)-r(F)}(-1)^l h_{F,l}Z^{r(M)-r(F)-l}
\end{equation}
where $h_{F,l}$ is the $h_l$-term for the contraction matroid $M/F.$ 
 If $M$ is derived from the generator matroid of a linear code $C$, then 
   $A_j(C_Q)=P_{j}(Q)$ and given by these formulas.
\end{proposition}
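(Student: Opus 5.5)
The plan is to rewrite the formulas already established, replacing Möbius values with homology ranks. There are three steps, which I would carry out in order.

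\textbf{Step 1: the formula for $P_n(Z)$ in terms of flats.} I start from Remark~\ref{specialcase}, which gives $P_n(Z)=p(M,Z)$. For $M$ loop-free, Definition~\ref{char} gives
\begin{equation*}
p(M,Z)=p(L,Z)=\sum_{F\in L}\mu(\hat 0,F)\,Z^{r(M)-r(F)}.
\end{equation*}
By Proposition~\ref{homWhit}, applied to each lower interval $L_F=[\hat 0,F]$, we have $\mu(\hat 0,F)=(-1)^{r(F)}h_{r(F)-2}(S_F)$. For $F=\hat 0$ this uses the convention $h_{-2}=1$. For atoms the open interval is empty, its reduced homology sits in degree $-1$ with rank $1$, and $\mu=-1$, so the convention is consistent. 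Substituting gives the first displayed formula.

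\textbf{Step 2: the formula for $P_n(Z)$ in terms of $h_l$.} I group the flats by rank $l$. By definition $h_l=\sum_{r(F)=l}h_{r(F)-2}(S_F)$, so the sum becomes $\sum_{l=0}^{r(M)}(-1)^l h_l Z^{r(M)-l}$. This is equivalently Theorem~\ref{thm:OS-mobius} combined with Theorem~\ref{thm:OS-Whitney}, since $h_l=\dim \OS^l(M)$.

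\textbf{Step 3: the general formula (\ref{onlywhitney}).} I use Proposition~\ref{gencontract}:
\begin{equation*}
P_j(Z)=\sum_{|F|=n-j}p(M/F,Z),
\end{equation*}
where the sum is over flats $F$. By Proposition~\ref{contractedflats}, each $M/F$ is loop-free, so Step 2 applies to $M/F$. Its rank is $r(M)-r(F)$, and its lattice of flats is the interval $[F,E]$ of $L$. This gives
\begin{equation*}
p(M/F,Z)=\sum_{l=0}^{r(M)-r(F)}(-1)^l h_{F,l}Z^{r(M)-r(F)-l}.
\end{equation*}
Summing over $F$ yields (\ref{onlywhitney}). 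Equivalently, one can start from Theorem~\ref{thm:Mobius} and write $\mu(F,F_2)=(-1)^{r(F_2)-r(F)}h$ for the interval $(F,F_2)$.

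\textbf{Code statement and the main obstacle.} The final statement about codes is Lemma~\ref{connection}, with $Q=q^m$. The only real subtlety is the loop-free hypothesis needed in Step 1 for the top formula. If $M$ has loops, then $p(M,Z)=0$. In that case the empty set is not a flat, the sum $\sum_{|F|=n}$ in (\ref{onlywhitney}) is empty, and that version of the formula is still correct. The first two displays I would state under the standing assumption that $M$ is loop-free, as in the code case, where non-degeneracy guarantees this. Otherwise the argument is pure bookkeeping: one has to check that the sign $(-1)^{r(F)}$ in a geometric lattice matches Proposition~\ref{homWhit} on every interval, including the degenerate low-rank ones.
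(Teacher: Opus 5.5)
Your proposal is correct and follows the paper's route: substitute $\mu(\hat 0,F)=(-1)^{r(F)}h_{r(F)-2}(S_F)$ (Proposition~\ref{homWhit} applied to lower intervals) into the M\"obius form of $P_n(Z)=p(L,Z)$, group by rank, apply the same rewriting to each loop-free contraction $M/F$ via Proposition~\ref{gencontract} (equivalently Theorem~\ref{thm:Mobius}), and get the code statement from Lemma~\ref{connection}. Your loop-free caveat is a worthwhile addition, with one slip: when $M$ has loops, the empty sum in (\ref{onlywhitney}) for $j=n$ is the one over $|F|=0$, not $|F|=n$.
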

\begin{corollary} \label{newtool}
The first result of Proposition \ref{Whitney} remains true if the local homology numbers $h_{r(F)-2}$ are regarded as  the absolute value of the reduced Euler characteristics of the simplicial complexes $S_F$. The same is true for the analogues of these numbers for the contraction matroids.  
\end{corollary}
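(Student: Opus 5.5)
The plan is to reduce Corollary \ref{newtool} to Proposition \ref{homWhit} together with the definition of the reduced Euler characteristic. The first formula of Proposition \ref{Whitney} expresses $P_n(Z)$ through the numbers $h_{r(F)-2}(S_F)$. It therefore suffices to show, for every flat $F$, that
\begin{equation*}
h_{r(F)-2}(S_F)=|\chi(S_F)| .
\end{equation*}
The formula then holds verbatim with $|\chi(S_F)|$ in place of $h_{r(F)-2}(S_F)$. The sign $(-1)^{r(F)}$ and the exponent $Z^{r(M)-r(F)}$ stay untouched.

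\textbf{Step 1.} I would invoke the Euler--Poincar\'e formula: the alternating sum of face numbers equals the alternating sum of reduced Betti numbers over a field. We need to match the paper's normalization $\chi(\Delta)=(-1)^{r-1}f(-1)$. Expanding $f(-1)=\sum_i f_i(-1)^{r-i}$ gives $\chi(\Delta)=\sum_i(-1)^{i-1}f_i$. This is exactly $\sum_d (-1)^d \tilde f_d$, where $\tilde f_d=f_{d+1}$ counts faces of dimension $d$, including the empty face in dimension $-1$. So $\chi(\Delta)=\sum_d(-1)^d\tilde h_d(\Delta)$, the standard reduced Euler characteristic.

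\textbf{Step 2.} By Proposition \ref{homWhit}, applied in the ``microcosmos'' form to $S_F$, only $\tilde h_{r(F)-2}(S_F)$ is nonzero. Hence $\chi(S_F)=(-1)^{r(F)-2}h_{r(F)-2}(S_F)=(-1)^{r(F)}h_{r(F)-2}(S_F)$, and taking absolute values gives the claim. Equivalently, $(-1)^{r(F)}h_{r(F)-2}(S_F)=\chi(S_F)=\mu(\hat0,F)$, which is Philip Hall's theorem and matches the definition of $w_F$. One could even bypass homology entirely and cite Hall's theorem $\mu(\hat0,F)=\chi(\Delta(\hat0,F))$. Then use the sign alternation $\operatorname{sgn}\mu(\hat0,F)=(-1)^{r(F)}$ in a geometric lattice, which Proposition \ref{prop:OS-decomp} already uses.

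\textbf{Step 3.} I would check the degenerate flats by hand. For $F=\hat0$ the complex $S_F$ is the void complex, and the convention $h_{-2}=1$ must be matched with $|\chi|=1$. Here the paper's face enumerator has to be read appropriately, and I would state this as part of the convention. For rank-one flats $S_F$ is the empty complex $\{\emptyset\}$, and $\chi=-1=\mu$.

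\textbf{Step 4.} The statement about contractions follows by applying the same argument to the lattice $L(M/F)\cong[F,\hat1]$ from Proposition \ref{contractedflats}. This yields formula (\ref{onlywhitney}) with $h_{F,l}$ replaced by sums of $|\chi|$ of the corresponding order complexes.

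The main obstacle is only bookkeeping. One must make sure the paper's sign convention for $\chi$ and the handling of the empty/void complex in the rank-$0$ case agree with $|\mu|$. Everything else is immediate from homology concentration in the top degree.
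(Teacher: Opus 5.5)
Your proposal is correct and follows essentially the same route as the paper. Both arguments use that the reduced homology of each $S_F$ is concentrated in degree $r(F)-2$ (via shellability, i.e.\ Proposition~\ref{homWhit}), so by the Euler--Poincar\'e relation the reduced Euler characteristic equals $\pm h_{r(F)-2}(S_F)$. Your check that the paper's normalization $\chi(\Delta)=(-1)^{r-1}f(-1)$ is the standard reduced Euler characteristic, and your remark that the rank-$0$ flat needs the convention $h_{-2}=1$, are useful bookkeeping details that the paper's proof leaves implicit.
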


\begin{proof}
As we have seen, all these simplicial complexes are shellable (\cite{Bjorner})), and all reduced homology of the $S_F$ is then concentrated at the top level,
so the absolute value of the reduced Euler characteristic is then equal to the value of a single non-zero homology number, i.e plus minus the local Whitney number in question.
\end{proof}
\begin{remark} \label{homolo}
{\rm Corollary \ref{newtool} gives a new tool to calculate the homology numbers, since they up to sign are equal to  the alternating sum of the face numbers of the simplicilal complexes appearing. We will use this tool in Subsection \ref{usingorder}.  The absolute values of the local Whitney numbers  may even be seen  as certain Betti numbers associated to the Stanley-Reisner rings of the simplicial complexes $S_F$. This may be seen by using Hochster's formula in a proper way.
It is unclear to which extent this may be helpful in determining them.} 
\end{remark}

\subsection{Hyperplane arrangements and cohomology}
The following subsection is optional for coding theorists; it applies to matroids that are representable over complex numbers, and we include it to complete the picture. 
\begin{definition}[Hyperplane arrangement and matroid]\label{def:hyperplane-setup}
Let $\mathcal{H} = \{H_1,\dots,H_n\}$ be an arrangement of hyperplanes in $\C^\ell$. Each $H_i$ is given by a linear equation
\begin{equation*}
a_{1} x_1 + \cdots + a_{\ell} x_\ell = 0,
\end{equation*}
and the columns $(a_1,\dots,a_\ell)^T$ form a matrix $A$ defining a matroid $M=M(\mathcal{H})$.
Hence the codimension of the intersection of a set of these hyperplanes defines the rank of the set of hyperplanes.

\begin{equation*}
T = \C^\ell \setminus \bigcup_{i=1}^n H_i
\end{equation*}
be the complement.
\end{definition}

A fundamental theorem of Orlik and Solomon states that the cohomology ring of the arrangement complement $T$ depends only on the underlying matroid $M$. 

More precisely, the cohomology ring $H^\ast(T)$ is naturally isomorphic to the Orlik--Solomon algebra $\OS(M)$, a graded algebra defined purely from the combinatorial data of $M$. 

This result establishes a remarkable bridge between the combinatorics of matroids and the topology of hyperplane arrangement complements.

\begin{theorem}\label{thm:orlik-solomon}
(\cite[Theorem 5.2]{OrlikSolomon}). For every complex arrangement $\mathcal{H}$, the singular cohomology algebra of
$T$ with $\mathbb{Z}$-coefficients is isomorphic, as a graded algebra,
to the Orlik--Solomon algebra of its matroid:
\begin{equation*}
H^{*}\bigl(T, \mathbb{Z}\bigr)
\cong
OS\bigl(M\bigr).
\end{equation*}
\end{theorem}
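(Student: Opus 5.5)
The plan is to follow Orlik and Solomon's original strategy: first compare the Poincaré polynomial of $T$ with that of $\OS(M)$, then construct an explicit algebra map, and finally show that this map is an isomorphism by deletion--contraction induction on the number of hyperplanes.

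\textbf{Step 1 (the map).} To each hyperplane $H_i$ with linear form $\alpha_i$ we attach the closed logarithmic $1$-form $\omega_i=\frac{1}{2\pi i}\,d\alpha_i/\alpha_i$ on $T$. Its class is integral: it is the pullback of the generator of $H^1(\C^*,\mathbb{Z})$ under $\alpha_i:T\to\C^*$. We then define a graded algebra map $\Phi:\Ecal\to H^*(T,\mathbb{Z})$ by $e_i\mapsto[\omega_i]$.

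We must check that $\Phi$ kills $\Ical$, i.e.\ that $\Phi(\delta(e_S))=0$ for every circuit $S$. If $S=\{s_1,\dots,s_k\}$ is dependent, the forms $\alpha_{s_1},\dots,\alpha_{s_k}$ satisfy a linear relation. Hence $\omega_{s_1}\wedge\cdots\wedge\omega_{s_k}=0$, since it is pulled back from a space of dimension less than $k$. The classical Arnold--Brieskorn computation then shows that $\delta(\omega_S)$, equivalently $\sum_i(-1)^{i-1}\omega_{s_1}\cdots\widehat{\omega_{s_i}}\cdots\omega_{s_k}$, vanishes as a form. For $k=3$ and a relation $\alpha_3=\alpha_1+\alpha_2$, this is the identity $\omega_1\omega_2-\omega_1\omega_3+\omega_2\omega_3=0$. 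The general case reduces to this by induction after normalizing the relation. So $\Phi$ descends to $\bar\Phi:\OS(M)\to H^*(T,\mathbb{Z})$.

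\textbf{Step 2 (induction via deletion--restriction).} Choose a hyperplane $H=H_n$ that is not an isthmus. Put $\mathcal H'=\mathcal H\setminus\{H\}$ and let $\mathcal H''$ be the restriction of $\mathcal H'$ to $H$. Then $T=T'\setminus(T'\cap H)$ and $T''=T'\cap H$. The Gysin, or Thom isomorphism, long exact sequence of the pair $(T',T)$ reads
\[
\cdots\to H^p(T')\to H^p(T)\xrightarrow{\mathrm{Res}} H^{p-1}(T'')\to H^{p+1}(T')\to\cdots.
\]
The matroid of $\mathcal H'$ is $M\setminus n$, and the matroid of $\mathcal H''$ is the simplification of $M/n$, so its lattice is $L/e$. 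We compare this sequence with the short exact sequence $0\to\OS(L-e)\to\OS(L)\to\OS(L/e)\to0$ of the Deletion--Contraction theorem, with degrees shifted by one on the right.

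We then verify that the maps $\bar\Phi$ for $\mathcal H',\mathcal H,\mathcal H''$ make the diagram commute. The restriction map matches $i$, and the residue along $H$ matches $j$, because $\mathrm{Res}(\omega_n\wedge\eta)=\eta|_H$. By induction on $n$, the outer maps are isomorphisms. The base cases are arrangements consisting only of isthmuses, where $T\simeq(\C^*)^k\times\C^{\ell-k}$ and both sides are exterior algebras.

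\textbf{Step 3 (closing the induction).} Surjectivity of $j$ together with the commuting diagram forces $\mathrm{Res}$ to be surjective. Hence the connecting maps vanish and the long sequence splits into short exact sequences. The five lemma then shows that $\bar\Phi$ is an isomorphism for $\mathcal H$. Integrality is not an issue here: both sides are torsion-free, since $\OS(M)$ has an NBC basis and the short exact sequences split over $\mathbb{Z}$ inductively.

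The main obstacle is Step 2, specifically identifying the topological residue with the combinatorial map $j$ and proving commutativity up to sign. I would handle this on the level of differential forms, using the Brieskorn lemma that $H^*(T)$ is generated by the classes $[\omega_i]$ locally near $H$. As a consistency check, the Poincaré polynomial of $T$ satisfies the deletion--restriction recursion $P_T=P_{T'}+tP_{T''}$, which matches $P_{\OS}$ via Lemma~\ref{lem:pL-PA}.
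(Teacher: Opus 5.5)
The paper gives no proof of this statement. It is quoted from Orlik and Solomon, and the only supporting argument (Theorem~\ref{thm:OS-deRham}) is the remark that logarithmic forms satisfy the OS relations. Your outline is the standard deletion--restriction proof via the Gysin sequence, which differs from the cited route. Orlik and Solomon take Brieskorn's theorem (the classes $[\omega_i]$ generate $H^*(T;\mathbb{Z})$, which is torsion free) as topological input, so what remains for them is to compare $\OS(M)$ with the algebra of logarithmic forms, an essentially algebraic matter. Your route gets generation, injectivity and torsion-freeness all at once from the induction. The price is that you must identify the topological residue with $j$. The skeleton is sound: relations via logarithmic forms, surjectivity of $j$ forcing the Gysin sequence to split, and the five lemma.

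Two steps need repair, however.

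\emph{Integrality is circular as written.} Step~1 does not yet give a map out of $\OS(M)$ over $\mathbb{Z}$. The Arnold relation holds as an identity of forms, hence in $H^*(T;\C)$, and an integral class with zero image there is only known to be torsion. In Step~3 you appeal to torsion-freeness of $H^*(T;\mathbb{Z})$. But that comes from the split sequence for $T$, and you deduced the splitting from a commuting square involving $\bar\Phi$ for $\mathcal H$ itself. Reorder the argument as follows.
\begin{enumerate}
\item Work with $\Phi$ on $\Ecal$, together with the map $\tilde j\colon\Ecal\to\OS(L/e)$ given by the same formula as $j$.
\item The identity $\mathrm{Res}\circ\Phi=\pm\bar\Phi''\circ\tilde j$, plus surjectivity of $\bar\Phi''$ (by induction), gives surjectivity of $\mathrm{Res}$.
\item Hence the sequences $0\to H^p(T')\to H^p(T)\to H^{p-1}(T'')\to 0$ are exact with free outer terms, so $H^*(T;\mathbb{Z})$ is free.
\item Only now does $\Phi$ descend to $\bar\Phi$, and the five lemma applies.
\end{enumerate}

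\emph{The residue identification can be done topologically.} The identity $\mathrm{Res}\circ\Phi=\pm\bar\Phi''\circ\tilde j$ only has to be checked on $\Phi(e_S)$ and $\Phi(e_ne_S)$ with $n\notin S$. The former is restricted from $T'$ and so is killed by $\mathrm{Res}$ by exactness. For the latter you need $\mathrm{Res}([\omega_n]\cup y|_T)=\pm y|_{T''}$ for $y\in H^*(T';\mathbb{Z})$. The coboundary $\delta\colon H^p(T)\to H^{p+1}(T',T)$ satisfies $\delta(x\cup y|_T)=\pm\delta(x)\cup y$. Moreover, $\delta[\omega_n]$ is the Thom class of $T''\subset T'$, because $\alpha_n$ maps $(T',T)$ to $(\C,\C^*)$ and restricts to a coordinate on each normal disc. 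The Thom isomorphism then sends $\delta[\omega_n]\cup y$ to $y|_{T''}$, which gives the formula.

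No generation statement for $H^*(T)$ is needed. Indeed, if your ``Brieskorn lemma'' means that the $[\omega_i]$ generate $H^*(T)$, that is the surjectivity half of the theorem you are proving. A differential-form computation would also suffice, since $H^*(T'';\mathbb{Z})$ is torsion free by induction. In that case, though, you must quote the compatibility of the Leray residue with the Gysin residue map.
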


We give a more detailed explanation:
\begin{definition}[Differential forms and OS algebra]\label{def:R-omega}
One constructs differential forms $\omega_B$ on $T$ associated with sets $B\subseteq[n]$ (typically logarithmic forms $\omega_i=\mathrm{d}\log f_i$ for defining equations of $H_i$, and wedge them). Define a graded algebra
\begin{equation*}
R = \bigoplus_{p=0}^\ell R^p,
\qquad R^p = \mathrm{span}\{\omega_B : |B|=p\}.
\end{equation*}
\end{definition}

\begin{theorem}[Orlik--Solomon algebra = de Rham cohomology ring {\cite{OrlikSolomon,OST1984}}]\label{thm:OS-deRham}
The following graded $K$-algebras are isomorphic:
\begin{enumerate}
\item The algebra $R$ spanned by the $\omega_B$;
\item The Orlik--Solomon algebra $\OS(M)$ of the matroid $M$;
\item The de Rham cohomology ring $H^\ast(T)$ of the arrangement complement.

\end{enumerate}
\end{theorem}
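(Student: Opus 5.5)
The plan is to prove the three-way isomorphism by chaining two identifications: (1) $\OS(M)\cong R$, via the map $e_i\mapsto\omega_i$, and (2) $R\cong H^\ast(T)$, the de Rham form of Theorem~\ref{thm:orlik-solomon}. Here the coefficients are $K=\C$, and we set $\omega_i=\frac{1}{2\pi i}\,\mathrm{d}\log f_i = \frac{1}{2\pi i}\frac{\mathrm{d}f_i}{f_i}$.

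For (1), define the algebra homomorphism $\phi:\Ecal\to\Omega^\ast(T)$ on generators by $e_i\mapsto\omega_i$. This is well defined because the $\omega_i$ are closed $1$-forms, hence anticommute and square to zero. The image of $\phi$ is precisely $R$, since $\phi(e_B)=\omega_B$. The first thing to check is that $\Ical\subseteq\ker\phi$. If $S=\{s_1,\dots,s_p\}$ is a circuit, the linear forms $f_{s_1},\dots,f_{s_p}$ are linearly dependent but any $p-1$ of them are independent. The classical computation, using the relation $\sum c_k f_{s_k}=0$ and dividing by $f_{s_1}\cdots f_{s_p}$, then gives $\phi(\delta(e_S))=0$; in the non-simple case one first reduces to atoms. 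So $\phi$ descends to a surjection $\bar\phi:\OS(M)\to R$.

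Next I show that the de Rham classes of the $\omega_B$ for NBC sets $B$ are linearly independent in $H^\ast(T)$. From this, $\bar\phi$ is injective, because $\OS(M)$ is spanned by the NBC monomials (Proposition~\ref{prop:OS-decomp}). It also follows that every element of $R$ is a closed form, and no nonzero one is exact, so $R\to H^\ast(T)$, $\omega\mapsto[\omega]$, is injective. Surjectivity onto $H^\ast(T)$ then follows by a dimension count: Brieskorn's lemma/Theorem~\ref{thm:orlik-solomon} gives $\dim H^p(T)=\dim\OS^p(M)$, which equals the number of NBC $p$-sets. Since $\OS(M)\to R\to H^\ast(T)$ is an injective graded map between spaces of equal finite dimension in each degree, both arrows are isomorphisms of graded algebras. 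The wedge product on forms induces the cup product in de Rham cohomology, so these are algebra isomorphisms and not just linear ones.

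The main obstacle is the independence of the NBC classes in $H^\ast(T)$. My first approach is deletion--contraction induction on $n$, using the Deletion--Contraction Exact Sequence above, paired with the long exact Gysin/restriction sequence for the triple $(T(\mathcal H), T(\mathcal H\setminus H_e), T(\mathcal H^{H_e}))$. Here $e$ is the minimal atom and not an isthmus; the isthmus case is handled by a product decomposition $T\cong T'\times\C^\ast$. Residue along $H_e$ sends $\omega_{\{e\}\cup S'}$ to the corresponding form on the restricted arrangement, mirroring the map $j$. By induction the outer terms are injective, and the five lemma gives the result. If that compatibility becomes messy, the fallback is to take Theorem~\ref{thm:orlik-solomon} as given: its isomorphism sends $e_i$ to $[\omega_i]$ by construction, so composing it with $\bar\phi$ yields the claim directly.
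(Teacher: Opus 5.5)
Your proposal is correct, and its skeleton matches the paper's: the circuit relations give a map $\OS(M)\to R$, and the Orlik--Solomon theorem, whose isomorphism sends $e_i\mapsto[\omega_i]$, identifies everything with $H^\ast(T)$. Your fallback is exactly the paper's argument made explicit.

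Where you go further is at a point the paper's first sentence passes over. Checking that the $\omega_B$ satisfy the relations generating $\Ical$ only gives a surjection $\OS(M)\to R$. Injectivity, equivalently that no nonzero element of $R$ is exact, is the real content: Brieskorn's lemma on the independence of the NBC classes. The paper gets it implicitly from the fact that the Orlik--Solomon isomorphism factors through $R$. Your primary route instead proves it by deletion--restriction against the Thom--Gysin sequence, which makes the argument essentially self-contained. If you strengthen the induction hypothesis to bijectivity of $\OS\to H^\ast$, you no longer need Theorem~\ref{thm:orlik-solomon} even for the dimension count.

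In that step, make explicit that the five lemma requires the Gysin maps to vanish. Write $T'=T(\mathcal H\setminus H_e)$ and $T''=T(\mathcal H^{H_e})$; the maps in question are $H^{p-2}(T'')\to H^{p}(T')$. They vanish because the residue map $H^{p}(T)\to H^{p-1}(T'')$ is surjective once the outer vertical maps are isomorphisms. You get that either from injectivity plus equal dimensions or from the strengthened hypothesis, and then the long sequence splits into short exact ones.

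One small slip: the $\omega_i$ anticommute and square to zero simply because they are $1$-forms. Closedness is what you need so that the $\omega_B$ define cohomology classes.
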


\begin{proof}
The generators $\omega_B$ satisfy the same antisymmetry and circuit boundary relations that define the Orlik--Solomon algebra, giving $R \cong \OS(M)$. The Orlik--Solomon theorem identifies $\OS(M)$ with $H^\ast(T)$ via the logarithmic de Rham complex, mapping generators to logarithmic differential forms and respecting grading and wedge product. Composing these identifications gives the chain of isomorphisms.
\end{proof}




\begin{definition}\label{lem:characteristic-polynomial}
Let $\mathcal{A}$ be an essential arrangement of hyperplanes in $\mathbb{R}^d$ (meaning that the intersection of all hyperplanes is $\{0\})$, and let
$L(\mathcal{A})$ denote its intersection lattice. The \emph{characteristic polynomial}
of $\mathcal{A}$, equivalently of $L=L(\mathcal{A})$, is defined by
\begin{equation*}
p(\mathcal{A};\lambda)
=
p(L;\lambda)
=
\sum_{x\in L}
\mu(\hat{0},x)\,\lambda^{\,d-r(x)},
\end{equation*}
where $\mu$ is the Möbius function of the lattice $L$, and $r(x)$ denotes the rank of
the element $x$.
\end{definition}

As one sees, the characteristic polynomial is closely related to the topology of the
complexified arrangement $\mathcal{A}^{\mathbb{C}}$. By
Theorem~\ref{thm:orlik-solomon}, $p(\mathcal{A};\lambda)$ can be obtained by using Lemma \ref{lem:pL-PA} on the Poincaré polynomial of the cohomology algebra of the complement
$\mathbb{C}^d\setminus\mathcal{A}^{\mathbb{C}}$.

\section{An example for illustration}

\subsection{Using properties of a graph} \label{square}
As we have seen in Proposition \ref{charischrom}, when $G$ is  a graph without coloops, and with $t$ components, and $n$ edges, and $C$ is  the $q$-ary code derived from any incidence matrix obtained by assigning directions  to the non-loop edges of $G$, for any prime power $q$, then the $n$'th weight polynomial $P_n(Z)$ of $C$ is equal to the characteristic polynomial of the cycle matroid of $G$, which again is equal to  $Z^{-t}P_{G}(Z),$ for the chromatic polynomial $P_{G}(Z)$  of $G.$

The following result will be useful, and is most probably well known, but for the benefit of the reader we will give a proof below:
\begin{lemma} \label{complete}
Let $M$ be the cycle matroid of the complete graph $G=K_m$, for some natural number $m$.
Then the flats of $M$ are the edge sets $E(G')$ of cycle matroids of the subgraphs $G'$ of $G$, which are such that each connected component of $G'$ is a complete graph $K_s$ for some natural number $s$. By convention, by a subgraph, we mean a graph that has the same vertex set as $G$, and an edge set which is a subset of that of $G$.

Moreover, if one contracts $G$ in a flat, which thus corresponds to a partition of type $m_1,\cdots, m_s$ of its vertex set $V$, then the simplification of the contraction (the contraction may be a multigraph)  will be a new complete graph $K_a$, where $a= m -\Sigma_{i=1}^s (m_i-1) .$
\end{lemma}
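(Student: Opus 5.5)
The first claim follows from the standard description of closure in a cycle matroid, and the second from identifying the contraction with a quotient graph. I would prove the two parts in that order.

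\emph{Flats.} For a graphic matroid, the closure of an edge set $X$ in $G$ consists of $X$ together with every edge $uv$ of $G$ whose endpoints lie in the same connected component of the spanning subgraph $(V,X)$. The reason is that adding $uv$ keeps the rank $|V|-c(X)$ exactly when $u$ and $v$ are already joined by a path in $X$. Here $c(X)$ is the number of components of $(V,X)$, and I would recall the rank formula $r(X)=|V|-c(X)$ from the standard theory of cycle matroids \cite{Oxley1992}.

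Hence $X$ is a flat if and only if, for every component with vertex set $V_i$, all edges of $G$ between vertices of $V_i$ already lie in $X$. In $G=K_m$ every pair of vertices is joined by an edge, so this condition says precisely that each component of $(V,X)$ is the complete graph $K_{|V_i|}$ on its vertex set. Isolated vertices count as components $K_1$.

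Conversely, take any partition $V=V_1\cup\cdots\cup V_s$ and let $X$ be the union of the edge sets of the complete graphs on the $V_i$. Then $X$ is closed: any edge outside $X$ joins two distinct blocks, and adding it lowers $c$ and so raises the rank. This gives a bijection between flats and set partitions of $V$.

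\emph{Contraction.} Let $F$ be the flat determined by the partition with block sizes $m_1,\dots,m_s$. I would use the standard fact that the cycle matroid of $G/F$, obtained by identifying the vertices of each block and deleting the edges of $F$, is $M/F$ \cite{Oxley1992}. One can check this against Definition~\ref{basic}. In the quotient, $r_c(Y)=r(F\cup Y)-r(F)$ equals the number of vertices of the quotient graph minus the number of its components on edge set $Y$. This holds because $c(F\cup Y)$ counts the components after merging blocks along $Y$.

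The quotient graph has one vertex per block, so it has $s$ vertices. For two distinct blocks $V_i,V_k$ there are $m_i m_k\ge 1$ edges of $K_m$ between them, and these become parallel edges. No loops remain, since the edges inside blocks are exactly $F$, which has been contracted; this is consistent with Proposition~\ref{contractedflats}.

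Simplification keeps one edge from each parallel class, which leaves the complete graph on $s$ vertices. Finally I count: since $\sum_i m_i=m$, we get $s=m-\sum_{i=1}^s(m_i-1)$, which is the stated value of $a$.

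\emph{Main obstacle.} The only delicate point is the convention that flats are spanning subgraphs with singleton components allowed, so that the partition includes blocks of size $1$. The formula for $a$ depends on counting all $s$ blocks, not only the nontrivial ones, and terms with $m_i=1$ contribute $0$ to the sum, as they should. I would state this explicitly so that the count $s$ comes out correctly.
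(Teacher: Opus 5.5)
Your proof is correct and follows essentially the same route as the paper. Both arguments use the rank formula $r(X)=|V|-c(X)$: adding an edge inside a component leaves the rank unchanged, while adding an edge between blocks raises it. For the contraction, both identify the vertices of each block, leaving one vertex per block with every pair still joined, so the simplification is $K_s$ with $s=m-\sum_i(m_i-1)$. Your closure formulation and your explicit check that $r_c(Y)=s-c(F\cup Y)$ only make the paper's terser argument more explicit.
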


\begin{proof}
The rank of a subgraph $G'$ is $v-c'$, where $v=m$, and $c'$ is the number of connected components of $G'$.
If $G'$ is not as described in the lemma, then it has a connected component with two vertices that are not connected by an edge. Add that edge $e$, and $v$ and $c'$ remain unaltered. Hence $r(E(G'))=r(E(G') \cup \{e\},$ and $E(G')$ is not a flat.
If $G'$ on the other hand  is as described, and not equal to $G$ itself, we may add an edge $e$ to $E(G')$.
Then, by the convention  in the lemma, $e$ connects two components of $G'$ (including the possiblity of a component being an isolated point $K_1 $), and $c'$ decreases by $1$, and the rank consequently increases by $1$. Hence $E(G')$ 
is a flat). If $G'=G$, then $E(G')=E(G)$ is a flat by definition.

The last statement is obvious, since the contraction consists of deleting a certain number of edges and then perform $\Sigma_{i=1}^s (m_i-1)$ point identifications afterwards. After these point identifications all pairs of points are connected by by at least one edge, since they were so before the identifications took place. 
\end{proof}
\begin{remark}
    The first part of Lemma \ref{complete} can be given a natural generalization to all graphic matroids. 
    
    The isolated points of $G'$ have no influence on the cycle matroid of $G'$, so in the rest of this subsection, when we work with flats of the cycle matroid $K_4$, we will disregard the components of type $K_1$ when describing the flats.

    Moreover the lattice of flats of a cycle matroid of a multigraph is isomorphic to the lattice of flats of the cycle matroid of the  graph which is the  simplification of the multigraph (or equivalently: It is isomorphic to  the lattice of flats of the simplification of the cycle matroid of the multigraph).
    
    \end{remark}

\begin{corollary}
For $m \ge 1$ the number of flats for the cycle matroid of $K_m$ is the Bell number  $B_m$, which counts the disjoint partitions of the vertex set. $B_0=1$ by convention, and $B_{n+1}=\Sigma_{k=0}^n \binom{n}{k}B_k$ for 
$n \ge 1$.                                                        
\end{corollary}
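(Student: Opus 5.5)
By Lemma \ref{complete}, the flats of the cycle matroid of $K_m$ correspond exactly to the spanning subgraphs $G'$ of $K_m$ each of whose connected components is a complete graph. The plan is to build a bijection between such subgraphs and set partitions of the vertex set $V$, $|V|=m$. Counting those partitions then gives the Bell number $B_m$. The recursion and the convention $B_0=1$ are the standard facts about Bell numbers, and I would recall a one-line proof of the recursion for completeness.

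\emph{The bijection.} To a flat $E(G')$ I assign the partition of $V$ into the vertex sets of the connected components of $G'$, with isolated vertices counted as singleton blocks, i.e.\ components of type $K_1$. Conversely, to a partition $\{V_1,\dots,V_s\}$ of $V$ I assign the edge set $\bigcup_i \binom{V_i}{2}$, the union of the complete graphs on the blocks.

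The key checks are the following. By Lemma \ref{complete}, this edge set is a flat, because every component of the resulting subgraph is complete. The two maps are mutually inverse: if every component of $G'$ is complete, then $G'$ is recovered from its component vertex sets, and the partition is recovered from the components of $\bigcup_i\binom{V_i}{2}$. Injectivity on edge sets needs one remark, which is the only delicate point. Distinct partitions give distinct edge sets even when singleton blocks are involved, since two vertices lie in a common block exactly when the edge joining them is present. I therefore do not expect any real obstacle here. One only has to keep track of the $K_1$ components, which the paper's convention disregards when describing flats but which must be counted as blocks of the partition. For example, the empty flat corresponds to the all-singletons partition.

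For the recursion, I fix a vertex $v$ of an $(n+1)$-element set. I sort the partitions of that set by the block containing $v$: choose the $n-k$ other elements that share the block with $v$, in $\binom{n}{n-k}=\binom{n}{k}$ ways. The remaining $k$ elements are then partitioned arbitrarily, in $B_k$ ways. Summing over $k$ gives $B_{n+1}=\sum_{k=0}^n\binom{n}{k}B_k$.
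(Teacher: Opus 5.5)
Your proposal is correct and takes the approach the paper intends. The paper gives no separate proof: it presents the corollary as an immediate consequence of Lemma~\ref{complete}, using the correspondence between flats (spanning subgraphs with complete components, isolated vertices counted as $K_1$ blocks) and set partitions of the vertex set, and quotes the Bell recursion as a standard fact; you make that bijection and the recursion argument explicit.
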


\begin{example} \label{contractionexample}

\begin{figure}[ht]
\centering
\begin{tikzpicture}[node distance=2.5cm, every node/.style={circle, draw=none, inner sep=2pt}]
  \node (v1) at (-2, 2)  {$v_1$};
  \node (v2) at (2, 2)   {$v_2$};
  \node (v3) at (-2, -2) {$v_3$};
  \node (v4) at (2, -2)  {$v_4$};

  \draw [red, thick] (v1) -- (v2) node[midway, above, black] {\small $1$};
  
  \draw [red, thick] (v2) -- (v3) node[pos=0.3, right, black] {\small $2$};
  
  \draw [red, thick] (v3) -- (v4) node[midway, below, black] {\small $3$};
  
  \draw [red, thick] (v4) -- (v1) node[pos=0.3, left, black] {\small $4$};
  
  \draw [red, thick] (v1) -- (v3) node[midway, left, black] {\small $5$};
  
  \draw [red, thick] (v2) -- (v4) node[midway, right, black] {\small $6$};

\end{tikzpicture}
\caption{The complete graph $K_4$ associated with the matroid $M$.}
\label{figure1}
\end{figure}

{\rm 

We will now look at the complete graph $K_4$ drawn in Figure~\ref{figure1}. 
Its incidence matrix $A_q$ over $\mathbf{F}_q$  after choosing a direction for each of the $6$ edges, can be row reduced to $3 \times 6$ generator matrices $G_q$ and for a linear $q$-ary code $C^q$. Let $M$ be the matroid, which simultaneously is the cycle matroid of $K_4$, and the matroids obtained from $G_q.$ We want to find the $P_j(Z),$ for $j=0,\cdots,6.$

The $B_4=15$   flats of $M$ then correspond to subgraphs of $K_4$. There are 5 kinds of flats:
\begin{itemize}
\item The entire graph $K_4$ with $6$ edges and rank $3=|V|-1.$
\item Its $4$ complete subgraphs of type $K_3$, each with 3 edges and rank $2.$
\item $3$ pairs of $2$ edges, where the edges in each pair do not meet in a vertex, each with rank $2$.
\item $6$ graphs with just $1$ edge each, all of them of rank $1$.
\item A graph with no edges (or the graph with $6$ vertices and no edges if one prefers).
\end{itemize}
These correspond, taking complements,  to cycles of $M^*$ of cardinalities $0,3,4,5,6$ respectively. 

Hence $P_{j}(Z)=0,$ for $j=1,2,$
and non-zero for $j=0,3,4,5,6.$
We will combine Propositions \ref{gencontract} and \ref{charischrom}, and thus use the chromatic polynomial of each contracted graph to find the $P_j(Z)$. 

As is well known, the contraction of a cycle matroid of a graph at an edge set $S$ is the same as the same as the cycle matroid of the graph obtained by contracting this edge set $S$ (and also the same as the matroid obtained from any generator matrix of the shortening of any $C^q$ at the positions corresponding to the complement of $S$ in the entire edge set).

Hence we will contract in the graph sense  each  flat in question, and obtain a multigraph, and study the resulting graph obtained by simplification. 
We then obtain:
\begin{itemize}
\item The "empty" graph $K_1$, with just one vertex, and no edges, and with chromatic polynomial $Z$.
\item $4$ graphs of type $K_2,$ each with chromatic polynomial $Z(Z-1).$
\item $3$ graphs of type $K_2.$, each with chromatic polynomial $Z(Z-1).$
\item $6$ graphs of type $K_3$, each with chromatic polynomial $Z(Z-1)(Z-2).$
\item The complete graph $K_4$, with chromatic polynomial $Z(Z-1)(Z-2)(Z-3)$
\end{itemize}
Using Propositions \ref{gencontract} and \ref{charischrom}, one then gets, since each of the contracted graphs are connected:
\begin{align*}
P_{0}(Z) &= 1, \\
P_{3}(Z) &= 4(Z-1), \\
P_{4}(Z) &= 3(Z-1), \\
P_{5}(Z) &= 6(Z-1)(Z-2) = 6Z^2 - 18Z + 12, \\
P_{6}(Z) &= (Z-1)(Z-2)(Z-3) = Z^3 - 6Z^2 + 11Z - 6.
\end{align*}

In Subsection \ref{converting} we will show how one uses the formula 
$$P_{w}(q^m)=\sum_{r=1}^mA_w^{(r)}\prod_{i=0}^{r-1}(q^m-q^i) \textrm{ for }m> 0,$$ to determine  the $\{A^{(r)}_{w}\},$ for $w=1\cdots,6$ for each of the fixed values $r=1,2,3$ for all $C^q$.
}

\end{example}

\subsection{The lattice of flats} \label{flats}

\begin{figure}[ht]
\centering
\begin{tikzpicture}[xscale=1.5, yscale=1.8]
  \node[red] at (-5, 2) {\textcolor{red}{\bfseries 1}};
  \node[red] at (-5, 0.7) {\textcolor{red}{\bfseries 6}};
  \node[red] at (-5, -0.7) {\textcolor{red}{\bfseries 11}};
  \node[red] at (-5, -2) {\textcolor{red}{\bfseries 6}};

  \node (max) at (0,2) {$\emptyset, \trygve{\bf{1}}$};

  \node (e1) at (-3.75, 0.7) {$\{1\}, \trygve{\bf{1}}$};
  \node (e2) at (-2.25, 0.7) {$\{2\}, \trygve{\bf{1}}$};
  \node (e3) at (-0.75, 0.7) {$\{3\}, \trygve{\bf{1}}$};
  \node (e4) at (0.75, 0.7)  {$\{4\}, \trygve{\bf{1}}$};
  \node (e5) at (2.25, 0.7)  {$\{5\}, \trygve{\bf{1}}$};
  \node (e6) at (3.75, 0.7)  {$\{6\}, \trygve{\bf{1}}$};

  \node (f146) at (-4, -0.7) {$\{1,4,6\}, \trygve{\bf{2}}$};
  \node (f345) at (-2.7, -0.7) {$\{3,4,5\}, \trygve{\bf{2}}$};
  \node (f236) at (-1.4, -0.7) {$\{2,3,6\}, \trygve{\bf{2}}$};
  \node (f125) at (0, -0.7)    {$\{1,2,5\}, \trygve{\bf{2}}$};
  \node (f13)  at (1.4, -0.7)  {$\{1,3\}, \trygve{\bf{1}}$};
  \node (f24)  at (2.7, -0.7)  {$\{2,4\}, \trygve{\bf{1}}$};
  \node (f56)  at (4, -0.7)    {$\{5,6\}, \trygve{\bf{1}}$};

  \node (min) at (0,-2) {$\{1,2,3,4,5,6\}, \trygve{\bf{6}}$};

  \foreach \i in {1,2,3,4,5,6} { \draw[red] (max) -- (e\i); }

  \draw[red] (e1) -- (f146); \draw[red] (e4) -- (f146); \draw[red] (e6) -- (f146);
  \draw[red] (e3) -- (f345); \draw[red] (e4) -- (f345); \draw[red] (e5) -- (f345);
  \draw[red] (e2) -- (f236); \draw[red] (e3) -- (f236); \draw[red] (e6) -- (f236);
  \draw[red] (e1) -- (f125); \draw[red] (e2) -- (f125); \draw[red] (e5) -- (f125);
  \draw[red] (e1) -- (f13);  \draw[red] (e3) -- (f13);
  \draw[red] (e2) -- (f24);  \draw[red] (e4) -- (f24);
  \draw[red] (e5) -- (f56);  \draw[red] (e6) -- (f56);

  \foreach \f in {f146, f345, f236, f125, f13, f24, f56} { \draw[red] (\f) -- (min); }

\end{tikzpicture}
\caption{Lattice of flats for the cycle matroid of $K_4$. The blue numbers indicate $|\mu(\emptyset, F)|$ and the red numbers indicate Whitney numbers of the first kind.}
\label{figure2}
\end{figure}

We now return to Example \ref{contractionexample}, and the codes and matroid obtained from the complete graph $K_4:$

In Figure~\ref{figure2}
we have drawn (upside down) the lattice of flats, ordered by inclusion. Reading upwards and taking complements, one obtains the poset of cycles of $M^*$. 

The absolute values of the local Whitney numbers $\mu(\emptyset,F)$ of the  flats are written in blue to the right of each node, while knowing that these Whitney numbers themselves alternate in sign for each immediate row change. The absolute values of the Whitney numbers of the first kind are written in red to the left of each row.  By Theorem \ref{thm:Mobius} we deduce from these numbers at the left that $P_{6}(Z)=Z^3-6Z^2+11Z-6.$ A lower left part of the main diagram is drawn in Figure~\ref{figure3},

\begin{figure}[ht]
\centering
\begin{tikzpicture}[node distance=2cm, auto]
\node at (-4.5, 0) {\textcolor{red}{\bfseries 1}};
\node at (-4.5, -2) {\textcolor{red}{\bfseries 3}};
\node at (-4.5, -4) {\textcolor{red}{\bfseries 2}};

  \node (top) at (0,0) {$\{1\}, \trygve{\bf{1}}$};

  \node (f146) at (-2.5, -2) {$\{1,4,6\}, \trygve{\bf{1}}$};
  \node (f125) at (0, -2)    {$\{1,2,5\}, \trygve{\bf{1}}$};
  \node (f13)  at (2.5, -2)  {$\{1,3\}, \trygve{\bf{1}}$};

  \node (bottom) at (0,-4) {$\{1,2,3,4,5,6\}, \trygve{\bf{2}}$};

  \foreach \n in {f146, f125, f13} {
    \draw [red] (top) -- (\n);
    \draw [red] (\n) -- (bottom);
  }
\end{tikzpicture}
\caption{The absolute values of Whitney numbers for the contracted matroid $M/\{1\}$.}
\label{figure3}
\end{figure}

Looking at this diagram and the five other analogous diagrams with singleton flats in the top row, we deducs that  $P_{5}(Z)=6(Z^2-3Z+2)=6Z^2-18Z+12,$
 using Theorem \ref{thm:Mobius} on each of the $6$ contracted matroids, and summing contributions. The red and blue numbers are now absolute values of Whitney numbers for the contracted matroid $M/\{1\}.$  
 The even lower left corner, shown as Figure~\ref{figure4}, 
\begin{figure}[ht]
\centering
\begin{tikzpicture}[node distance=2cm, auto]
\node[color=red] at (-4.5, 0)  {\textcolor{red}{\bfseries 1}};
\node[color=red] at (-4.5, -2) {\textcolor{red}{\bfseries 1}};

  \node (top) at (0,0) {$\{1, 4, 6\}, \trygve{\bf{1}}$};

  \node (bottom) at (0,-2) {$\{1, 2, 3, 4, 5, 6\}, \trygve{\bf{1}}$};

  \draw [red] (top) -- (bottom);
\end{tikzpicture}
\caption{The absolute values of Whitney numbers for the contracted matroid $M/\{1, 4, 6\}$.}
\label{figure4}
\end{figure}
and the three analogous diagrams with triplets in the top row,  give $P_{3}(Z)=4(Z-1)=4Z-4$
summing contributions and using Theorem \ref{thm:Mobius}.  The red and blue numbers are now absolute values of Whitney numbers for the contracted matroid $M/\{1,4,6\}. $ The lower and rightmost corner is shown in Figure~\ref{figure5}.
\begin{figure}[ht]
\centering
\begin{tikzpicture}[node distance=2cm, auto]
  \node[red] at (-4.5, 0)  {\textcolor{red}{\bfseries 1}};
  \node[red] at (-4.5, -2)  {\textcolor{red}{\bfseries 1}};

  \node (top) at (0,0) {$\{5, 6\}, \trygve{\bf{1}}$};

  \node (bottom) at (0,-2) {$\{1, 2, 3, 4, 5, 6\}, \trygve{\bf{1}}$};

  \draw [red] (top) -- (bottom);
\end{tikzpicture}
\caption{The absolute values of Whitney numbers for the contracted matroid $M/\{5,6\}$.}
\label{figure5}
\end{figure}

That diagram and the two analogous diagrams with point pairs in the top row,  give $P_{4}=3(Z-1)=3Z-3$
 using Theorem \ref{thm:Mobius}, and summing contributions. The red and blue numbers are now absolute values of  Whitney numbers for the contracted matroid $M/\{5,6\}.$ 

 We have now reproduced the results from Subsection \ref{square}, where we used chromatic polynomials, using Theorem \ref{thm:Mobius}
(Formula (\ref{eq:Mobius-flats})) instead. This example is of course very simple, but illustrates how one can find weight polynomials of a linear code $C$, in general, using the lattice of flats of $M$.

\subsection{The method of elongations}
In this subsection we will show how the method of elongations described in Subsection \ref{eq:Mobius-flats} can be used to find the 
$P_j(Z)$ for the cycle matroid of the same complete graph $K_4.$
In Figure~\ref{figure6} we have drawn the cycle diagram obtained from Example \ref{contractionexample}, for the case $N=M^*$, for the matroid $M$ appearing there, coming from the graph $K_4.$
\begin{figure}[ht]
\centering
\begin{tikzpicture}[xscale=1.2, yscale=1.5]
  \node (max) at (0,3) {$E, \trygve{\bf{6}}$};

  \node (c1) at (-5,1) {$E \setminus \{1\}, \trygve{\bf{2}}$};
  \node (c2) at (-3,1) {$E \setminus \{2\}, \trygve{\bf{2}}$};
  \node (c3) at (-1,1) {$E \setminus \{3\}, \trygve{\bf{2}}$};
  \node (c4) at (1,1)  {$E \setminus \{4\}, \trygve{\bf{2}}$};
  \node (c5) at (3,1)  {$E \setminus \{5\}, \trygve{\bf{2}}$};
  \node (c6) at (5,1)  {$E \setminus \{6\}, \trygve{\bf{2}}$};

  \node (s235) at (-4.5,-1) {$\{2,3,5\}, \trygve{\bf{1}}$};
  \node (s126) at (-3,-1)   {$\{1,2,6\}, \trygve{\bf{1}}$};
  \node (s145) at (-1.5,-1) {$\{1,4,5\}, \trygve{\bf{1}}$};
  \node (s346) at (0,-1)    {$\{3,4,6\}, \trygve{\bf{1}}$};
  \node (s2456) at (1.5,-1) {$\{2,4,5,6\}, \trygve{\bf{1}}$};
  \node (s1356) at (3,-1)   {$\{1,3,5,6\}, \trygve{\bf{1}}$};
  \node (s1234) at (4.5,-1) {$\{1,2,3,4\}, \trygve{\bf{1}}$};

  \node (min) at (0,-3) {$\emptyset, \trygve{\bf{1}}$};

  \foreach \i in {1,2,3,4,5,6} { \draw [red] (max) -- (c\i); }

  \draw [red] (c1) -- (s126); \draw [red] (c1) -- (s145); \draw [red] (c1) -- (s1356); \draw [red] (c1) -- (s1234);
  \draw [red] (c2) -- (s235); \draw [red] (c2) -- (s126); \draw [red] (c2) -- (s2456); \draw [red] (c2) -- (s1234);
  \draw [red] (c3) -- (s235); \draw [red] (c3) -- (s346); \draw [red] (c3) -- (s1356); \draw [red] (c3) -- (s1234);
  \draw [red] (c4) -- (s145); \draw [red] (c4) -- (s346); \draw [red] (c4) -- (s2456); \draw [red] (c4) -- (s1234);
  \draw [red] (c5) -- (s235); \draw [red] (c5) -- (s145); \draw [red] (c5) -- (s2456); \draw [red] (c5) -- (s1356);
  \draw [red] (c6) -- (s126); \draw [red] (c6) -- (s346); \draw [red] (c6) -- (s2456); \draw [red] (c6) -- (s1356);

  \foreach \node in {s235, s126, s145, s346, s2456, s1356, s1234} { \draw [red] (\node) -- (min); }

\end{tikzpicture}
\caption{Cycle diagram for the dual matroid $N=M^*$ of the graph $K_4$.}
\label{figure6}
\end{figure}

 The numbers in blue to the right of each node are the $|\mu(\emptyset,\sigma)|$ for the Möbius function of the poset. Let us call such a number $\nu_{i,\sigma}=\nu^{(0)}_{i,\sigma}$ if $\sigma$ is a cycle of nullity $i$. The corresponding diagram for the first elongation of $N(=M^*)$is drawn in Figure~\ref{figure7}.
\begin{figure}[ht]
\centering
\begin{tikzpicture}[node distance=1.5cm, auto]
  \node (max) at (0,2.5) {$E, \trygve{\bf{5}}$};

  \node (c1) at (-5,0) {$E \setminus \{1\}, \trygve{\bf{1}}$};
  \node (c2) at (-3,0) {$E \setminus \{2\}, \trygve{\bf{1}}$};
  \node (c3) at (-1,0) {$E \setminus \{3\}, \trygve{\bf{1}}$};
  \node (c4) at (1,0)  {$E \setminus \{4\}, \trygve{\bf{1}}$};
  \node (c5) at (3,0)  {$E \setminus \{5\}, \trygve{\bf{1}}$};
  \node (c6) at (5,0)  {$E \setminus \{6\}, \trygve{\bf{1}}$};

  \node (min) at (0,-2.5) {$\emptyset, \trygve{\bf{1}}$};

  \foreach \n in {c1, c2, c3, c4, c5, c6} {
    \draw [red] (min) -- (\n);
    \draw [red] (max) -- (\n);
  }
\end{tikzpicture}
\caption{Cycle diagram for the first elongation of $N=M^*$ for $K_4$. The blue numbers represent the virtual Betti numbers $\nu^{(1)}_{i,\sigma}$.}
\label{figure7}
\end{figure}

The analogous numbers in blue are now called  $\nu^{(1)}_{i,\sigma}$ where $i=n^{(1)}(\sigma)$ for each cycle in question. 
The last diagrams, for the second anf third elongations,  are drawn in Figure~\ref{figure8}.
\begin{figure}[ht]
\centering
\begin{tikzpicture}[node distance=2cm]
  \node (max2) at (0,2) {$E, \trygve{\bf{1}} = \nu^{(2)}_{1,E}$};
  \node (min2) at (0,0) {$\emptyset, \trygve{\bf{1}} = \nu^{(2)}_{0,\emptyset}$};
  \draw [red] (min2) -- (max2);
  \node at (0,-1) {{Second Elongation} $N^{(2)}$};

  \node (min3) at (5,1) {$\emptyset, \trygve{\bf{1}} = \nu^{(3)}_{0,\emptyset}$};
  \node at (5,-1) {{Third Elongation} $N^{(3)}$};
\end{tikzpicture}
\caption{Cycle diagrams for the second and third elongations of $N=M^*$ for $K_4$.}
\label{figure8}
\end{figure}

Set $\nu^{(l)}_{i,j}=\Sigma \nu^{(l)}_{i,\sigma}$, where the sum is taken over the $\sigma$ of cardinality $j$ (and nullity $i$ for $N^{(l)}$). From \cite{JohnsenRoksvoldVerdure} it now follows that $$P_j(Z)=\sum_{\ell=0}^{3}(\phi_j^{(l)}-\phi_j^{(l-1)})Z^l,$$
for $i=0,\cdots,6,$ where $$\phi_j^{(l)}
=\displaystyle\sum_{i=0}^k(-1)^i\nu^{(l)}_{i,j} \textrm{,  for } j=0, 1, \dots, n. 
$$
From this formula and the diagrams above we obtain the following non-zero $\phi^{(l)}_{j}$ in our example: 
$$\phi_0^{(0)}=1,\phi_3^{(0)}=-4, \phi_4^{(0)}=-3, \phi_5^{(0)}=12, \phi_6^{(0)}=-6,$$
$$ \phi_0^{(1)}=1, \phi_5^{(1)}=-6, \phi_6^{(1)}=5, \phi_0^{(2)}=1,\phi_6^{(2)}=-1, \phi_0^{(3)}=1.$$
Formula (\ref{weightelong})  gives the expected polynomials: 
 $$P_{0}=1, P_{3}=4Z-4, P_{4}=3Z-3$$
 $$P_{5}=6Z^2-18Z+12,P_{M^*,6}=Z^3-6Z^2+11Z-6.$$

\subsection{Using the OS-algebra, method with NBC-sets} \label{Using the OS-algebra, method with NBC-sets}
To construct the No Broken Circuit (NBC) sets for the cycle matroid of the complete graph $K_4$, we choose a total ordering of the edges, find all cycles of the graph (circuits of the matroid), form the broken circuits by deleting the minimum edge from each cycle, and collect all edge subsets that do not contain any broken circuit.

From the drawing in Subsection \ref{square}, the circuits of the matroid are:
\begin{equation} 
\mathcal{C} = \{ \{1,4,6\}, \{3,4,5\}, \{1,2,5\}, \{2,3,6\}, \{1,2,3,4\}, \{1,3,5,6\}, \{2,4,5,6\} \}. 
\end{equation}
Choosing the natural ordering $1 < 2 < 3 < 4 < 5 < 6$, the broken circuits are:
\begin{equation} 
\{4,6\}, \{4,5\}, \{2,5\}, \{3,6\}, \{2,3,4\}, \{3,5,6\}, \{4,5,6\}. 
\end{equation}

Hence, the NBC-sets (subsets containing no broken circuits) are:
\begin{itemize}
    \item The empty set $\emptyset$ (1 set);
    \item All $6$ singleton sets $\{i\}$ (6 sets);
    \item All $15$ two-point sets except $\{4,6\}, \{4,5\}, \{2,5\},$ and $\{3,6\}$ (11 sets);
    \item The $6$ three-point sets $\{1,2,3\}, \{1,2,4\}, \{1,2,6\}, \{1,3,4\}, \{1,3,5\},$ and $\{1,5,6\}$.
\end{itemize}
This results in $1$ zero-point set, $6$ one-point sets, $11$ two-point sets, and $6$ three-point sets. Consequently, the Poincaré series $P_{OS(Z)}$ of the Orlik–Solomon algebra is $1 + 6Z + 11Z^2 + 6Z^3$.

Hence for the "highest" generalized weight polynomial we have:
$$P_6(Z)=p(M,Z)=Z^3P_{\mathcal{A}}(\frac{-1}{Z})=Z^3-6Z^2+11Z-6.$$
For $P_5(Z)$ we have to add the contributions from $6$ contracted graphs of type $K_3.$
For each of them we have one circuit 123, and one broken circuit 23. Hence the NBC-sets are 
$\emptyset,$ all the $3$ singleton sets, and the two-point sets 12 and 13.
Hence the Poincare series $P_{OS}(Z)$ of the OS algebra of the relevant matroid is 
$1+3Z+2Z^2$
Hence we have:
$$P_5(Z)=6Z^2P_{OS}(\frac{-1}{Z})=6(Z^2-3Z+2).$$
For the  flats contributing to $P_4(Z)$ and $P_3(Z)$ we look at $3$ and $4$, respectively, graphs $K_2$ with no circuits at all, and therefore no broken circuits. Hence for each of them the NBC-sets are  $\emptyset,$ and a singleton set, so each Poincare series is $1+Z$. Then the contributions to the generalized weight polynomials are $Z-1$ for each of them.  This gives $P_4(Z)=3(Z-1),$ and $P_3(Z)=4(Z-1).$

For $P_0(Z)$ we contract all edges, so the ground set of the resulting matroid is $\emptyset$.
This is also the only NBC-set. Hence both the Poincare polynomial of the resulting(trivial)  OS-albebra and the generalized weight polynomial is $P_0(Z)=1.$

\subsection{Using order homology} \label{usingorder}

We will now utilize the viewpoint of Section \ref{homol}, and in particular Remark \ref{homolo}.
We will sketch how you obtain some of the coefficients of the generalized weight polynomials.
We start with the coefficient $-6$ in the constant term $-6$ of $P_6(Z)=Z^3-6Z^2+11Z-6.$
This corresponds to the highest, cubic term of the Poincare series $6Z^3+11Z^2+6Z+1.$ 
That the coefficient is negative is due to the fact that the rank of the matroid is odd. So we will show that the absolute value is $6.$ But from Remark \ref{homolo} we know that this absolute value is that of the reduced Euler characteristic of the simplicial complex $S_L$ described in the beginning of Subsection \ref{homol}. Looking at Figure~\ref{figure2}, we see that the facets have cardinality $2$, and the face number $f_2$ is the number of arrows connecting the middle layers of the drawing in that figure.
As one sees, there are $18$ arrows. The face number $f_1$ is the number of lattice points after deleting the top and bottom. There are $13$ such points. The face number $f_0$ (contributing because we look at $\underline{reduced}$ homology) is $1$ because of the empty set. Hence the absolute value of the reduced Euler characteristic is $18-13+1=6$. 

To find the other coefficients of $P_6(Z)$ we add contributions from all other flats, For a fixed flat in the lattice diagram we then only look at the subdiagram of those flats that are at or above that fixed flat in the diagram. Then the coefficient $11$ of $P_6(Z)$ arises as $4(3-1)+3(2-1)$, since the face numbers for the $4$ flats with $3$ elements are $f_1=3,f_0=1$, and for the $3$ flats of cardinality  of cardinality $2$ they are $f_1=2,f_0=1.$ For the coefficient $-6$ of the linear term of $P_6(Z)$, we look at 6 diagrams, which only have a top and a bottom points. Removing both, we get 6 empty diagrams, each with $f_0=1.$  

To find $P_5(Z)$ in a similar way, we must look at all $6$ flats of cardinality $n-5=6-5=1$, i.e. the ones just below the top. 
For each of them we now instead look only at the flats at or under that flat in the diagram.  Restricting calculations to subdiagrams bounded above by a flat $F$  mirrors the algebraic behavior of the contraction matroid $M/F$. To find the contribution to the constant term of $P_5(Z)$, we delete the top and bottom term of the new, smaller diagram obtained. That diagram is given in Figure~\ref{figure3}. We end up with just $3$ points. Hence we get face numbers $f_1=3,f_0=1.$ This gives reduced Euler characteristic $2$, and the contribution 
$12$ for the sum of the $6$ contributions. This explains the coefficient $12$, both  in $P_5(Z)=6Z^2-18Z+12, $
and in $12Z^2+18Z+6,$ which is the sum of the Poincare series of the OS-algebras, of all $6$ matroids contracted at one point(edge). 

All other coefficients of all weight polynomials can be found by looking at other suitable subdiagramss of Figure~\ref{figure2}.


\subsection{The method of Jurrius and Pellikaan} \label{JPstuff}
 It is in general  well known that the entire weight enumerator $W(X,Y,Z)=\Sigma_{w=0}^n P_w(Z)X^{n-w}Y^w$ can be found from the Tutte polynomial: If one knows the Tutte polynomial, all generalized weight polynomials $P_w(Z)$ can be extracted from the following formula:

$$W(X,Y,Z)=(X-Y)^{r(M)}Y^{n-r(M)}T_M(\frac{X+(Z-1)Y)}{(X-Y)},\frac{X}{Y}).$$

So  all the techniques in the previous sections are primarily intended to be used in the case when one does not know the Tutte polynomial, but nevertheless has sufficient knowledge about the lattice and cardinalities of flats. To know the Tutte polynomial is equivalent to know all the numbers
$B_{j,s}$  that are the number of subsets  $J \subset E$ such that $|J|=j$ and $r(J)=s$, where $r$ is the rank function of the matroid $M$ obtained from a generator matrix of the linear $[n,k,d]_q-$code $C$. 
We will present  another "twist" on this, and present a technique from \cite{JurriusPellikaanCodes} that does not use the lattice of flats of the matroid, but uses knowledge about \underline{all} subsets of the ground set:
Set 
	\begin{equation} \label{JPsum1}
    B_i (T)=\Sigma_{s=0}^{k-1}B_{i,s} (T^{k-s}-1).
    \end{equation}
We have $P_0 (Z)=1$, and for $j>0$:

\begin{equation} \label{JPsum2}
P_j (Z)=\Sigma_{i=n-j}^{n-d}(-1)^{n+j+i} { \binom{i}{n-j}} B_i (T).
\end{equation}

Hence a practical way to find all the weight polynomials $P_j(Z)$ may be to find all the 
$B_{i,s}$ in question. 

\begin{remark} \label{more}
{\rm Using this terminology for the matroid from $K_4$ in  Example \ref{contractionexample} we get the values $B_{3,2}=4, B_{2,2}=15, B_{1,1}=6,B_{0,0}=1$ for the non-zero contributions that matter in Formula (\ref{JPsum1}). This gives 
($B_6(Z)=B_5(Z)=B_4(Z)=0,$ and) $B_3(Z)=4(Z-1), B_2(Z)=15(Z-1), B_1(Z)=6(Z^2-1), B_0(Z)=Z^3-1).$
Thus Formula (\ref{JPsum2}) gives:}

$\begin{array}{rcl}
P_6(Z)&=&B_0(Z)-B_1(Z)+B_2(Z)-B_3(Z)=Z^3-6Z^2+11Z-6.\\
P_5(Z)&=&B_1(Z)-2B_2(Z)+3B_3(Z)=6Z^2-18Z+12.\\
P_4(Z)&=&B_2(Z)-3B_3(Z)=3(Z-1)\\
P_3(Z)&=&B_3(Z)=4(Z-1).
\end{array}$

and clearly $P_0(Z)=1.$
We see that we (of course) get the same answer as in Example \ref{contractionexample}.
\end{remark}

\begin{remark}
A method of finding all the generalized weight polynomials, after first finding all $B_{j,s}$, was used by Kaipa and Pradhan in \cite{KaipaPradhan} to find all the $P_j(Z)$ for the projective Reed-Muller code $PR_3(2,3).$

\end{remark}

\subsection{Finding the weight spectra from the generalized weight polynomials} \label{converting}

We will illustrate how one finds the weight spectra from the generalized weight polynomials.
A weight spectrum is a set of numbers $A^{(r)}_w$ for fixed $r$, where $w$ varies. The conversion formula Equation (\ref{conversion}) 
$$P_w(q^m)=\sum_{r=0}^mA_w^{(r)}\prod_{i=0}^{r-1}(q^m-q^i), \textrm{for }m>0,$$
gives us all $A^{(r)}_w$ for fixed $w$, where $r$ varies. But if one uses the conversion formula for all fixed $w$, one ends up with all $A^{(r)}_w$ anyway. In our running example with the cycle matroid of $K_4,$
let us use Equation (\ref{conversion}) to find the $A^{r}_w$ for $w=6$. It is enough to use the formula for $m=1,2,3=r(M).$ Obviously $A_6^{(0)}=0$.

For $m=1$ we get $q^3-6q^2+11q-6=A^{(1)}_6(q-1).$ This gives $A^{(1)}_6=q^2-5q+6.$

For $m=2$ we get $q^6-6q^4+11q^2-6=A^{(1)}_6(q^2-1)+A^{(2)}_6(q^2-1)(q^2-q).$
This gives $A^{(2)}_6=q^2+q-5,$ after inserting the value found for $A^{(1)}_6.$

For $m=3$ we get $q^9-6q^6+11q^3-6=A^{(1)}_6(q^3-1)+A^{(2)}_6(q^3-1)(q^3-q)+A^{(3)}_6(q^3-1)(q^3-q)(q^3-q^2).$ This gives (the obvious answer) $A_6^{(3)}=1,$ after inserting the values found for $A^{(1)}_6$ and $A^{(2)}_6.$

One finds the $A^{(r)}_w$ for $w=0,3,4,5$ in the same manner, and  the non-zero values are:
$A^{(1)}_5=6q-12, A^{(2)}_5=6, A^{(1)}_4=3,A^{(1)}_3=4, A^{(0)}_0=1.$
For larger examples the task of finding the $A^{(r)}_w$ from the weight polynomials is most easily done using a computer program like SAGE.

\section{Cyclic flats, cloud/flock polynomials, and top weights}

The Tutte polynomial of a matroid, and hence the top weight polynomial, can be reconstructed from suitable cyclic-flat data through Eberhardt' cloud/flock polynomials~\cite{Eberhardt}. Evaluating those polynomials generally encodes information about flats and subsets lying over each cyclic flat.

\begin{definition}[Cyclic flats]\label{def:cyclic-flats}
A \emph{cyclic flat} of a matroid $M$ is a subset $X\subseteq E$ which is both a flat and a cycle (i.e.\ inclusion-minimal among subsets with the same nullity). Equivalently, its complement in $E$ is a cyclic flat in the dual matroid $M^*$.

Let $L(M)$ be the lattice of flats and $Z(M)$ the set of cyclic flats.

Define the map
\begin{equation*}
e_M : L(M) \to Z(M),
\end{equation*}
sending a flat $X$ to the unique maximal cyclic flat contained in $X$.

Let
\begin{equation*}
\operatorname{cl}_M : \mathcal{P}(E)\to L(M)
\end{equation*}
be the closure operator ($cl_M(A)$ is the smallest flat containing $A$, that is: The intersection of all flats containing $A$).
\end{definition}

\begin{definition}[Cloud and flock polynomials {\cite{Eberhardt}}]\label{def:cloud-flock}
For each cyclic flat $Z\in Z(M)$:
\begin{itemize}
  \item the \emph{cloud polynomial} is
  \begin{equation*}
  c(M,Z;X)
  =
  \sum_{A\in e_M^{-1}(\{Z\})} X^{r(M)-r(A)},
  \end{equation*}
  \item the \emph{flock polynomial} is
  \begin{equation*}
  f(M,Z;Y)
  =
  \sum_{B\in \operatorname{cl}_M^{-1}(\{Z\})} Y^{|B|-r(B)}.
  \end{equation*}
\end{itemize}
\end{definition}
The following is known:
\begin{theorem}[Cloud/flock decomposition of Tutte {\cite{Eberhardt}}]\label{thm:Tutte-cloud-flock}
For any matroid $M$, the Tutte polynomial can be computed from its cyclic flats via
\begin{equation*}
T_M(X,Y)
=
\sum_{Z\in Z(M)}
c(M,Z;X-1)\, f(M,Z;Y-1).
\end{equation*}
\end{theorem}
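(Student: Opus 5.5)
The plan is to start from the corank--nullity expansion of the Tutte polynomial in Definition~\ref{char},
\begin{equation*}
T_M(X,Y)=\sum_{A\subseteq E}(X-1)^{r(E)-r(A)}(Y-1)^{|A|-r(A)},
\end{equation*}
and to regroup this sum over all subsets according to a single cyclic flat attached to each $A$. For this I would use the classical fact that every subset $A$ determines a canonical pair: its closure $\operatorname{cl}_M(A)$, which is a flat, and its \emph{cyclic part} $\sigma(A)$, the union of all circuits contained in $A$, which is a cycle. A cycle has a closure that is a cyclic flat. The key structural claim is then the following: the assignment $A\mapsto \operatorname{cl}_M(\sigma(A))$ is a map onto $Z(M)$, and $A$ lies over the cyclic flat $Z$ exactly when $A$ splits into a part governed by a flat $F$ with $e_M(F)=Z$ and a part governed by a set $B$ with $\operatorname{cl}_M(B)=Z$.

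Concretely, the first step is to set up a bijection between subsets $A\subseteq E$ and pairs $(F,B)$, where $Z$ ranges over $Z(M)$, $F\in e_M^{-1}(\{Z\})$, $B\in\operatorname{cl}_M^{-1}(\{Z\})$, and the rank statistics split additively. The exponent identities we need are
\begin{equation*}
r(E)-r(A)=r(M)-r(F)\quad\text{and}\quad |A|-r(A)=|B|-r(B).
\end{equation*}
My first attempt at the bijection will be as follows. Put $B=A\cap Z$, where $Z=\operatorname{cl}_M(\sigma(A))$. Then $B\supseteq\sigma(A)$, so $\operatorname{cl}_M(B)=Z$. For the other coordinate, take $F=\operatorname{cl}_M(A)$; one has to check that the largest cyclic flat inside $\operatorname{cl}_M(A)$ is $Z$.

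The nullity identity should hold because $A\setminus Z$ consists of elements outside every circuit of $A$. Hence these elements are coloops of $M|A$, they add exactly their number to the rank, and so $n(A)=n(A\cap Z)$. The corank identity is immediate from $r(A)=r(\operatorname{cl}_M(A))$.

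The main obstacle is that this naive map is not a bijection. Many different $A$ give the same pair $(\operatorname{cl}_M(A), A\cap Z)$: they differ by the choice of the independent "free part" $A\setminus Z$, which must span $F$ modulo $Z$. To repair the count I would not use the closure directly. Instead I would decompose $A=B\sqcup I$, where $I$ is independent in the contraction $M/Z$, and then match the sum over $I$, weighted by $(X-1)^{r(M)-r(B\cup I)}$, against the cloud polynomial. This requires reading $e_M^{-1}(\{Z\})$ in the cloud polynomial as ranging over those subsets $A$ with $A\supseteq Z$ whose maximal cyclic subset inside $A$ (modulo $Z$) is trivial, i.e.\ over the sets $Z\sqcup I$ with $I$ independent in $M/Z$ and containing no circuits beyond those in $Z$. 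This is the interpretation under which Eberhardt's theorem holds, and I would verify that it agrees with the definition given for the sets involved.

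With that reading, the double sum factors for each fixed $Z$: the $(Y-1)$-weight depends only on $B$, the $(X-1)$-weight depends only on $Z\cup I$, and the choices of $B$ and $I$ are independent. Summing over $Z\in Z(M)$ then yields $T_M(X,Y)=\sum_{Z}c(M,Z;X-1)\,f(M,Z;Y-1)$. The final step is to check that every $A$ arises exactly once, using the fact that the cyclic part of $B\sqcup I$ equals the cyclic part of $B$ because $I$ is free over $Z$. I expect this uniqueness check to be the delicate point.
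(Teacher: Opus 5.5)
The paper gives no proof of this theorem (it is quoted from Eberhardt). Your proposal has a genuine gap: it attaches the wrong cyclic flat to $A$. You use $Z=\operatorname{cl}_M(\sigma(A))$, and the check you postpone fails in general. That check was that the largest cyclic flat in $\operatorname{cl}_M(A)$ is this $Z$. For $M=U_{2,3}$ and $A$ a basis, $\operatorname{cl}_M(\sigma(A))=\emptyset$, while $e_M(\operatorname{cl}_M(A))=e_M(E)=E$.

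Your repair then proves a different statement. Reading the cloud of $Z$ as a sum over all $Z\sqcup I$ with $I$ independent in $M/Z$ does not agree with the definition. The definition sums only over flats $F$ with $e_M(F)=Z$, i.e.\ (via $F\mapsto F\setminus Z$) over the independent \emph{flats} of $M/Z$. The theorem is true with the definition exactly as given, so no reinterpretation is needed.

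Combined with the flock polynomial as defined, your reading gives a false identity. For $U_{2,3}$, your cloud of $\emptyset$ is $t^2+3t+3$ (the defined one is $t^2+3t$), the cloud of $E$ is $1$, and the flocks are $1$ and $3+s$. Substituting $t=X-1$, $s=Y-1$ yields $X^2+X+Y+3\neq X^2+X+Y=T_{U_{2,3}}(X,Y)$, because every basis is counted once with $Z=\emptyset$ and once with $Z=E$. That is exactly where your uniqueness check breaks: from $B\sqcup I$ with only $\operatorname{cl}_M(B)=Z$ one recovers $\operatorname{cl}_M(\sigma(B))$, which can be strictly smaller than $Z$. (If you also restrict $B$ to cyclic sets with closure $Z$, your parametrization becomes a bijection. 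But the identity it proves is the stated formula for $M^*$, read through $T_M(X,Y)=T_{M^*}(Y,X)$. You would still have to apply it to $M^*$ and translate closures and cyclic parts through duality.)

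The missing idea is to attach the cyclic part of the closure instead, $Z:=e_M(\operatorname{cl}_M(A))$. With this choice your naive map is already a bijection. Put $F=\operatorname{cl}_M(A)$. For a flat $F$, $e_M(F)$ is the union of the circuits contained in $F$, so $F\setminus Z$ is exactly the set of coloops of $M|F$, and $r(F)=r(Z)+|F\setminus Z|$. Since $r(A)=r(F)$, $A$ contains every coloop of $M|F$, so $A=B\sqcup(F\setminus Z)$ with $B=A\cap Z$. The part of $A$ outside $Z$ is forced, not free.

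From $r(Z)+|F\setminus Z|=r(A)\le r(B)+|F\setminus Z|$ you get $r(B)=r(Z)$. Hence $\operatorname{cl}_M(B)=Z$, $|A|-r(A)=|B|-r(B)$, and $r(M)-r(A)=r(M)-r(F)$. Conversely, for any flat $F$ and any $B$ with $\operatorname{cl}_M(B)=e_M(F)$, the set $B\cup(F\setminus e_M(F))$ has closure $F$ and meets $e_M(F)$ in $B$.

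Thus $A\mapsto(\operatorname{cl}_M(A),\,A\cap e_M(\operatorname{cl}_M(A)))$ is an exponent-preserving bijection from $2^E$ onto the pairs $(F,B)$ with $F\in L(M)$ and $\operatorname{cl}_M(B)=e_M(F)$. Grouping the corank--nullity expansion by $Z=e_M(F)$ gives $\sum_{Z}c(M,Z;X-1)\,f(M,Z;Y-1)$ directly.
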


We now use:
\begin{equation*}
P_{n}(\lambda)
=
p(M,\lambda)
=
(-1)^{r(M)} T_M(1-\lambda,0),
\end{equation*} and obtain a cyclic-flat expansion:
\begin{corollary}[Top weight polynomial via cyclic flats]\label{cor:top-cyclic}

\begin{equation*}
P_{n}(\lambda)
=
(-1)^{r(M)}
\sum_{Z\in Z(M)}
c(M,Z; -\lambda)\, f(M,Z; -1).
\end{equation*}
\end{corollary}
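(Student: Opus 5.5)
The plan is to substitute directly: combine the identity $P_n(\lambda)=p(M,\lambda)$ from Remark \ref{specialcase} with the Tutte specialization $p(M,\lambda)=(-1)^{r(M)}T_M(1-\lambda,0)$ recalled after Definition \ref{char}. Then insert the cloud/flock decomposition of Theorem \ref{thm:Tutte-cloud-flock} at the point $(X,Y)=(1-\lambda,0)$. Since $X-1=-\lambda$ and $Y-1=-1$, each summand $c(M,Z;X-1)\,f(M,Z;Y-1)$ becomes $c(M,Z;-\lambda)\,f(M,Z;-1)$. Multiplying by $(-1)^{r(M)}$ gives the stated formula.

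The steps, in order, are as follows.

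First I would confirm that $P_n(\lambda)=p(M,\lambda)$ holds for every matroid. This is exactly Remark \ref{specialcase}, which uses $n^*(\gamma)=r(E)-r(E\setminus\gamma)$ and reindexes by complements. The identity holds even when $M$ has loops, since then both sides vanish, so no non-degeneracy hypothesis is needed.

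Second, I would check the sign convention in $p(M,\lambda)=(-1)^{r(M)}T_M(1-\lambda,0)$ against Definition \ref{char}. Substituting $X=1-\lambda$ and $Y=0$ gives
\begin{equation*}
T_M(1-\lambda,0)=\sum_{A\subseteq E}(-\lambda)^{r(E)-r(A)}(-1)^{|A|-r(A)}=(-1)^{r(E)}\sum_{A\subseteq E}(-1)^{|A|}\lambda^{r(E)-r(A)}.
\end{equation*}
Multiplying by $(-1)^{r(M)}$ recovers $p(M,\lambda)$, so the relation holds as stated.

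Third, I would apply Theorem \ref{thm:Tutte-cloud-flock}. It is a polynomial identity in $X,Y$, so it may be evaluated at any point, in particular at $(1-\lambda,0)$.

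There is no real obstacle; the only point needing care is notational. The letter $Z$ is overloaded: in the corollary it denotes a cyclic flat, not the weight-polynomial variable. This is why the variable is renamed $\lambda$, and I would remark on it explicitly. It is also worth noting that $f(M,Z;-1)=\sum_{B:\operatorname{cl}_M(B)=Z}(-1)^{|B|-r(B)}$ is an integer constant for each cyclic flat. Hence all of the $\lambda$-dependence sits in the cloud polynomials, which is what makes the corollary useful computationally.
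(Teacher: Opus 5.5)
Your proposal is correct and takes the same route as the paper. Like the paper, you combine $P_n(\lambda)=p(M,\lambda)=(-1)^{r(M)}T_M(1-\lambda,0)$ with the cloud/flock decomposition of Theorem~\ref{thm:Tutte-cloud-flock}, evaluated at $X=1-\lambda$, $Y=0$; your explicit checks of the sign convention and of $P_n(\lambda)=p(M,\lambda)$ for matroids with loops are correct details that the paper leaves implicit.
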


\begin{proof}
Substitute $X=1-\lambda$ and $Y=0$ into the cloud--flock decomposition of $T_M(X,Y)$. 
\end{proof}

\begin{example}[Uniform matroid]\label{ex:uniform-cyclic}
Let $M = U(r,n)$ be the uniform matroid with $0 <r<n.$ Then
\begin{equation*}
Z(M) = \{\emptyset, E\}.
\end{equation*}
As shown in \cite{Eberhardt},
\begin{equation*}
c(M,E;X) = 1, \qquad f(M,\emptyset;Y) = 1,
\end{equation*}
and
\begin{equation*}
c(M,\emptyset;X) = \sum_{i=0}^{r-1} \binom{n}{i} X^{r-i}, 
\qquad
f(M,E;Y) = \sum_{i=r}^{n} \binom{n}{i} Y^{i-r}.
\end{equation*}
Substituting into the cyclic-flat expression for $P_{n}(\lambda)$ gives
\begin{equation*}
P_{n}(\lambda)
=
(-1)^r
\left(
\sum_{i=0}^{r-1} \binom{n}{i} (-\lambda)^{\,r-i}
+
\sum_{i=r}^{n} \binom{n}{i} (-1)^{\,i-r}
\right).
\end{equation*}
Pulling the signs inside and simplifying yields
\begin{equation*}
P_{n}(\lambda)
=
\sum_{i=0}^{r-1} \binom{n}{i} (-1)^i \lambda^{r-i}
+
\sum_{i=r}^{n} \binom{n}{i} (-1)^i,
\end{equation*}
which agrees with the direct Möbius computation in Remark~\ref{ex:uniform-top} below.

To find the $P_j(\lambda)$ for all $j<n$, one uses Proposition \ref{gencontract}  (summing over all contractions of flats of cardinality $n-j$), Example \ref{unifo}, (4) and (7), which tell which sets are flats, and what the contractions at these flats are, and the calculationns of Example \ref{ex:uniform-cyclic} for each of these contractions.
\end{example}


\begin{remark}[Uniform matroids and Möbius]\label{ex:uniform-top}
As a sanity check we use the part of Definition \ref{char} to compute $P_n(\lambda)=p(M,\lambda)$ directly:

The flats are $E$ and all subsets of $E$ of cardinality $<r$. One checks 
that:
\begin{itemize}
  \item For a flat $F$ with $|F|=i \le r-1$, $\mu(\emptyset,F) = (-1)^{i}$.
  \item For $F=E$, one has
  \begin{equation*}
  \mu(\emptyset,E)
  =
  \sum_{0\le i<r} \binom{n}{i} (-1)^{i-1}
  =
  \sum_{r\le i\le n} \binom{n}{i} (-1)^i.
  \end{equation*}
\end{itemize}
Thus,
\begin{equation}\label{eq:uniform-top-formula}
P_{n}(\lambda)
= 
p(M,\lambda)
=
\sum_{0\le i<r} \binom{n}{i} (-1)^i \lambda^{r-i}
+
\sum_{r\le i\le n} \binom{n}{i} (-1)^i.
\end{equation}
This matches the computation from Example~\ref{ex:uniform-cyclic} via cyclic flats.
\end{remark}
\begin{remark}
From a superficial viewpoint the method of this subsection uses even less information about the matroid than those methods that use the entire lattice, and cardinalities, of the flats, since we only use the cyclic flats. 
But in reality this is a hybrid method in comparison with methods where we only use the flats, like in Subsection \ref{flats}, and methods where we use information about all subsets of the ground set $E$, like in Section \ref{JPstuff}. This is because the map $e_M$ is defined only for the flats, while the map $cl_M$ is defined for  \underline{all} subsets of $E$.
\end{remark}

\section{Projective Reed-Muller codes, an example}

Reed–Muller codes can be defined over any finite field $\mathbb{F}_q$, where $q$ is a prime power. Let $m$ and $d$ be positive integers, with $m > d$. A message $x \in \mathbb{F}_q^k$, where $k = \binom{m+d}{m}$, is identified with an $m$-variate polynomial $p_x$ of total degree at most $d$. Since such a polynomial has exactly $\binom{m+d}{m}$ coefficients, this correspondence is one-to-one. The Reed–Muller encoding of $x$ is obtained by evaluating $p_x$ at every point of the affine space $\mathbb{F}_q^m$, yielding a code of block length $n = q^m$.

A natural projective analogue is obtained by replacing the affine space with a projective space, leading to the family of projective Reed–Muller codes. These codes admit a geometric interpretation through projective varieties and embeddings, such as the Veronese map. Below, we examine the projective Reed–Muller code $PRM_3(2,2)$, the $[13,6,6]_3$-code obtained from mapping $\mathbb{P}^2$
over $\mathbb{Z}_3$ into $\mathbb{P}^5$ over $\mathbb{Z}_3$ by the quadratic Veronese mapping. Details about such codes $C_q$, not only over $\mathbb{Z}_3$, but over any finite field $\mathbb{F}_q$, are given in \cite{JohnsenPratiharVerdureVeronese}.
In that paper, the generalized weight spectra were derived using Betti numbers from elongation matroids to first determine the generalized weight polynomials. The following result was used, but not published, in \cite{JohnsenPratiharVerdureVeronese}:

\begin{proposition} \label{liste3}
Let the Veronese code $C_q$ be the linear $[q^2+q+1,6]_q $-code with generator matrix $G_q$. Then for the Veronese code $C_3$ the nonzero generalized weight polynomials for the Veronese code $C_3$ are as follows

\begin{align*}
P_{13}(Z) &= Z^6 - 13Z^5 + 78Z^4 - 273Z^3 + 585Z^2 - 702Z + 324 \\
P_{12}(Z) &= 13Z^5 - 156Z^4 + 806Z^3 - 2223Z^2 + 3120Z - 1560 \\
P_{11}(Z) &= 78Z^4 - 780Z^3 + 3042Z^2 - 5148Z + 2808 \\
P_{10}(Z) &= 234Z^3 - 1638Z^2 + 3510Z - 2106 \\
P_{9}(Z) &= 13Z^3 - 117Z^2 + 234Z - 130\\
P_8(Z) &= 117Z^2 - 468Z + 351 \\
P_6(Z) &= 78Z - 78 \\
P_0(Z) &= 1
\end{align*}

while $P_1(Z),\cdots,P_5(Z)$ and $P_7(Z)$ all are identically zero.

\end{proposition}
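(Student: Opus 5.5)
The plan is to apply Theorem~\ref{thm:Mobius}, or equivalently Proposition~\ref{gencontract}, to the matroid $M$ of $G_3$. The columns of $G_3$ are the Veronese images of the $13$ points of $\mathbb{P}^2(\mathbb{F}_3)$. So for $X$ a set of these points, $6-r(X)$ is the dimension of the space of ternary quadratic forms vanishing on $X$. Consequently $X$ is a flat exactly when $X$ equals the set of rational points lying on every conic through $X$.

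The first step is to classify all flats up to the action of $PGL_3(\mathbb{F}_3)$, recording for each type its size, rank and number of copies. I will use two geometric facts:
\begin{itemize}
\item A conic through $3$ collinear points contains that line, by B\'ezout.
\item Every conic over $\mathbb{F}_3$ has a rational point, by Chevalley--Warning.
\end{itemize}
These give the following list, which I expect to be complete but must verify.
\begin{itemize}
\item Rank $0$: the empty flat.
\item Rank $1$: the $13$ single points.
\item Rank $2$: the $78$ pairs of points.
\item Rank $3$: triangles, and the $13$ lines, each a $4$-point set.
\item Rank $4$: four points in general position; a line plus a point off it, which has $5$ points.
\item Rank $5$: the zero sets of single nonzero conics. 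These are smooth conics with $4$ points, double lines with $4$ points, line pairs with $7$ points, and conjugate line pairs with $1$ point. Under the flat condition, several of these zero sets coincide with lower-rank flats and must be merged with them.
\item Rank $6$: the whole set $E$.
\end{itemize}
From this list the cycles of $M^*$ are the complements of the flats. This explains the vanishing pattern: the flats have sizes $13,7,5,4,3,2,1,0$, so the only nonzero polynomials are $P_j$ for $j=0,6,8,9,10,11,12,13$.

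The second step is to compute, for each flat type $F$, the characteristic polynomial $p(M/F,Z)=\sum_{F'\supseteq F}\mu(F,F')Z^{6-r(F')}$. For this I need, for each $F$, the number of flats of each type lying above $F$. An alternative that avoids computing $\mu$ directly is inclusion--exclusion on point counts. For $Q=3^m$, the number of quadratic forms over $\mathbb{F}_Q$ vanishing on a set $S$ of rational points is $Q^{6-r(S)}$. Hence
\[
P_j(Q)=\sum_{|F|=13-j}\;\sum_{S\supseteq F}(-1)^{|S|-|F|}Q^{6-r(S)},
\]
which is the identity in Definition~\ref{weightj} read on the code side. Either route needs only the incidence counts between flat types, such as how many lines or conics pass through a given point, pair or triangle.

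The third step is to add the contributions over flats of each size $13-j$. As consistency checks I will use:
\begin{itemize}
\item the leading coefficients, for example $13$ lines giving the $13Z^3$ term of $P_9$ and $78$ line pairs giving $P_6=78(Z-1)$;
\item $P_j(1)=0$ from Proposition~\ref{factor};
\item $\sum_j P_j(Z)=Z^6$;
\item the top polynomial $P_{13}=p(M,Z)$, computed independently by counting NBC sets as in Subsection~\ref{Using the OS-algebra, method with NBC-sets}.
\end{itemize}

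I expect the main obstacle to be the first step: proving the flat classification is complete and correctly merged, and getting the incidence numbers needed for the Möbius values in the upper intervals, especially for $P_{10}$, $P_{11}$ and $P_{12}$, where the flats are small and the intervals above them are large. I would first try to organise this by orbit counting under $PGL_3(\mathbb{F}_3)$ and check each orbit size against the identity $\sum_j P_j=Z^6$.
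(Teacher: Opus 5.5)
Your method is the paper's own: take the flat list of the Veronese matroid (the paper quotes it from the proof of Theorem~21 in its reference on Veronese codes), compute $p(M/F,Z)=\sum_{F'\supseteq F}\mu(F,F')Z^{6-r(F')}$ on the upper intervals, sum over flats of size $13-j$, and cross-check $P_{13}$ and $P_6$ by NBC counting. Your flat classification, the merging of degenerate conic zero sets, and the vanishing pattern are all correct. \textbf{The gap is in the target, not the method.} Carried out, your plan does not reproduce the printed $P_9$, and no argument can, because that line of the proposition is false.

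The flats of size $4$ are the $13$ lines $L$, of rank $3$, and the $234$ four-point sets $Q$ in general position, of rank $4$. The latter are exactly the point sets of the $234$ smooth conics.
\begin{itemize}
\item Above $L$ lie $L$ itself, the $9$ flats $L\cup\{P\}$, the $12$ line pairs $L\cup L'$, and $E$. Between $L$ and $L\cup L'$ there are three flats $L\cup\{P\}$ with $P\in L'\setminus L$, so $\mu(L,L\cup L')=2$. Hence $\mu(L,E)=-(1-9+24)=-16$ and $p(M/L,Z)=(Z-1)(Z-4)^2$; indeed $M/L\cong AG(2,3)$.
\item Above $Q$ lie only $Q$, the three line pairs of the pencil through $Q$, and $E$. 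Hence $p(M/Q,Z)=Z^2-3Z+2$.
\end{itemize}
Therefore
\[
P_9(Z)=13(Z-1)(Z-4)^2+234(Z-1)(Z-2)=13Z^3+117Z^2-390Z+260,
\]
not $13Z^3-117Z^2+234Z-130$.

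Two independent checks confirm this.
\begin{itemize}
\item The printed polynomial gives $P_9(3)=-130<0$. But $C_3$ has $2(234+13)=494$ words of weight $9$, namely the nonzero forms defining a smooth conic or a double line. This is exactly the corrected value at $Z=3$.
\item Your own planned check $\sum_j P_j(Z)=Z^6$ fails for the printed list: the discrepancy is $-234Z^2+624Z-390$. It holds exactly with the corrected $P_9$.
\end{itemize}
The check $P_j(1)=0$ does not catch the error, so you must actually run the sum test rather than presume agreement.

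All other lines of the proposition survive your computation:
\begin{itemize}
\item a triangle gives $(Z-1)(Z-3)^2$;
\item a point pair gives $Z^4-10Z^3+39Z^2-66Z+36$;
\item a line plus a point gives $Z^2-4Z+3$;
\item a line pair gives $Z-1$;
\item $P_{12}$ then follows from $\sum_j P_j=Z^6$.
\end{itemize}
What your proposal can establish is this corrected list.

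A minor point about your alternative route. Inclusion--exclusion over all $S\supseteq F$ needs the rank distribution of all supersets, not just incidences between flat types. The exception is if you group the sets $S$ by $\operatorname{cl}(S)$: the signed counts then equal $\mu(F,\operatorname{cl}(S))$, and you are back on the first route.
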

We would like to show how one can find $P_{13}(Z)(=p(M,Z))$ for $M$ a generator matrix of the code, using 
  \begin{equation*}
    p(L, \lambda) = \sum_{x \in L} \mu(0, x)\,\lambda^{r - r(x)}.
    \end{equation*}
    from Definition \ref{char} instead, and thereafter, at least in part,  by the method of NBC-sets, described in  Subsection \ref{Using the OS-algebra, method with NBC-sets}. The weight polynomials $P_{13}(Z$ for $j \le 12$ are found by using the same techniques for matroids that are contractions of $M$ in its flats. These flats will be identified while finding
    $P_{13}(Z).$ By the proof of \cite[Theorem 21]{JohnsenPratiharVerdureVeronese} the flats of $M$ are:
    
    The empty set of rank $0$, and $13$ singleton sets of rank $1$, and $78$ point-pairs of rank $2$, and $13$ lines of rank $3$ and cardinality $4$, and $234$ "triangles" of rank and cardinality  $3$, and $234$ "quadrilaterals" of rank and cardinality $4$, and $117$ sets of type a line + a point outside the line. These have rank $4$ and cardinality $5$.
    Furthermore we have $78$ line pairs, each of rank $5$ and cardinality $7$, and at last we have the ground set $E=\mathbb{P}^2$ over $\mathbb{Z}_3$, with rank $6$ and cardinality $13$. Employing Definition \ref{Mob} one obtains that $\mu(\emptyset,F)$ have the following values when $F$ is:
   $$\emptyset\textrm{      1}$$
    $$\textrm{singleton  -1}$$
    $$\textrm{point pair   1}$$
    $$\textrm{line   -3  and triangle   -1}$$
    $$\textrm{quadrilateral   1  and line+point 3}$$
    $$\textrm{line pair  -9}$$
    $$\textrm{entire plane  324}$$
    Here each line corresponds to each rank, from $0$ to $6$, and summing over all sets in each line, we obtain 
    the values $$1,-13,78,-273, 585,-702,324,$$ which implies that the Poincare series of the OS-algebra is $$P_{OS}(Z)=1+13Z+78Z^2+273Z^3+585Z^4+702Z^5+324Z^6.$$ Hence  $P_{13}(Z)=p(M,Z)=(-1)^{r(M)}P_{OS}(\frac{-1}{Z})=
    Z^6 - 13 Z^5 + 78 Z^4 - 273 Z^3 + 585 Z^2 - 702 Z + 324.$
    Finding the weight polynomials $P_w(Z)$ for $w \le 12$ is straightforward, looking at the relevant subdiagrams 
    of the diagram of the lattice of flats for $M$, and are found in the same way..
    
    Passing to the methods of NBC-sets, one first observes  that the rank of a set of points in $E=\mathbb{P}^2$ is $6-p$, for $p$ the dimension of the space of quadric polynomials that pass through the points of the set. After some arguments using this, one finds that the circuits of $M$ are $13$ sets of cardinality $4$ (the $13$ lines in $\mathbb{P}^2$,  all of them are mapped to conics by the Veronese map, and therefore only span projective planes, not $3$-spaces in $\mathbb{P}^5)$, and furthermore  the $78={\binom{13}{2}}$ line pairs in
    $\mathbb{P}^2$, each of them punctured at the point where the lines in each pair meet, and in addition some sets of cardinality $7$. 

    We then see that there is $1$ NBC-set of cardinality $0$, and $13$ such sets of cardinality $1$, and ${\binom{13}{2}}=78$ such sets of cardinality $2$, and ${\binom{13}{2}}-13=273$ NBC-sets of cardinality $3$. A mechanical search only using the circuits of order $4$ and $6$, explicitly described above,  then gives $585$ NBC-sets of cardinality $4$, and 702 NBC-sets of cardinality $5$. 
    Hence we conclude once again that $$P_{OS}(Z)=1+13Z+78Z^2+273Z^3+585Z^4+702Z^5+324Z^6$$ and that  $P_{13}(Z)=p(M,Z)=(-1)^{r(M)}P_{\mathcal{A}}(\frac{-1}{Z})=
    Z^6 - 13 Z^5 + 78 Z^4 - 273 Z^3 + 585 Z^2 - 702 Z + 324.$
    The last coefficient $324$ was found here as the alternating sum $702-585+273-78+13-1, $ using Proposition \ref{factor}.

    Finding the weight polynomials $P_w(Z)$ for most $w \le 12$ is a little more intricate with the method of NBC-sets, but finding  for example $P_6(Z)$ is easy. This polynomial arises from the $78$ submaximal flats $F$ of $M$.
    These are lines pairs and have cardinality $7$.
    Then $M^* \setminus F$ is a circuit, and hence it is the uniform matroid $U(5,6).$
    Hence its dual is $M/F=U(1,6).$ The circuits of $M/F$ are then all the $2$-point sets. The broken circuits are then all singletons, except $\{1\}.$ The NBC-sets are then $\emptyset$ and $\{1\}.$
    Hence $P_{OS(Z)}=78(Z+1)$, and $P_6(Z)=78Z-78.$

The generalized weight poynomials for $PRM_q(2,2)$ for $q \ge 4$ can be found in \cite{JohnsenPratiharVerdureVeronese}.

\section{Discussion}

The primary objectives of this article have been to place the generalized weight polynomials of matroids within a broader topological framework, and providing tools for coding theorists when the matroid arises from the generator matrices of a linear code. Traditionally, extracting the complete family of generalized weight polynomials has relied heavily on the Tutte polynomial, a method that demands rank and nullity data for every subset of the matroid's ground set. The results presented here establish that such exhaustive enumeration is not strictly necessary. Instead, generalized weight polynomials can be fully recovered using only the inclusion relations and cardinalities of the lattice of flats (and when convenient, via the determination of cyclic flats and associated functions).

Furthermore, this work introduces a practical computational alternative utilizing the Orlik–Solomon (OS) algebra of the lattice of flats. Because the graded dimensions of the OS algebra completely determine the characteristic polynomial , and these dimensions can be computed simply by counting the No Broken Circuit (NBC) sets of each cardinality, coding theorists gain a direct, combinatorial procedure for finding the generalized weight polynomials, and thereafter extracting generalized weight spectra. For the highest weight polynomial one does not have to find the flats, it is enough to look at the circuits. But for this method to be contructive for other weight polynomials than the highest one, one needs to be able to use it for all matroids that are contractions of  the original matroid at its flats,
and then one  must at least determine the flats..

This framework also serves as a alternative to the method of matroid elongations. The elongation approach derives weight polynomials using univariate Möbius numbers with an additional identity as Betti numbers across a sequence of elongation matroids. Conversely, the Formula (\ref{eq:Mobius-flats}) developed in this work depend exclusively on the bivariate Möbius numbers found within the geometric lattice of a single matroid. While all perspectives shed light on different structural properties, the OS algebraic approach is uniquely adapted for topological and Whitney homology interpretations.

When the associated matroid is representable over the complex numbers, this algebraic approach acquires an elegant topological significance. By the fundamental theorem of Orlik and Solomon, the cohomology ring of the complement of a complex hyperplane arrangement is isomorphic to the OS algebra of its underlying matroid. As a result, the local Whitney numbers precisely match the dimensions of the cohomology groups of this arrangement complement. Viewed through this lens, the highest generalized weight polynomial emerges naturally as the Hilbert–Poincaré dual of the Orlik–Solomon algebra, bridging the generalized weight distribution of a linear code directly to geometric topology.






\end{document}